\newtheorem{theorem}{Theorem}[section]
\newtheorem{prop}[theorem]{Proposition}
\newtheorem{lemma}[theorem]{Lemma}
\newtheorem{cor}[theorem]{Corollary}
\newtheorem{conj}[theorem]{Conjecture}
\theoremstyle{definition}
\newtheorem{definition}[theorem]{Definition}
\theoremstyle{remark}
\newtheorem{remark}[theorem]{Remark}
\newtheorem{example}[theorem]{Example}
\newtheorem{hypothesis}[theorem]{Hypothesis}
\newcommand{\Z}{
\mathbb{Z}
}
\newcommand{\R}{
\mathbb{R}
}
\newcommand{\N}{
\mathbb{N}
}
\newcommand{\A}{
\mathbb{A}
}
\newcommand{\PP}{
\mathbb{P}
}
\newcommand{\F}{
\mathbb{F}
}
\newcommand{\K}{
\textsf{K}
}
\newcommand{\D}{
\textsf{D}
}
\newcommand{\unit}{
\mathbb{1}
}
\newcommand{\cone}{
\operatorname{cone}
}
\newcommand{\id}{
\operatorname{id}
}
\newcommand{\Hom}{
\operatorname{Hom}
}
\newcommand{\Map}{
\operatorname{Map}
}
\newcommand{\End}{
\operatorname{End}
}
\newcommand{\colim}{
\operatorname{colim}
}
\newcommand{\hocolim}{
\operatorname{hocolim}
}
\newcommand{\Spec}{
\operatorname{Spec}
}
\newcommand{\Spc}{
\operatorname{Spc}
}
\newcommand{\supp}{
\operatorname{supp}
}
\newcommand{\Supp}{
\operatorname{Supp}
}
\newcommand{\ideal}[1]{
\langle #1 \rangle
}
\newcommand{\Loc}{
\operatorname{Loc}
}
\newcommand{\Sch}{
\textsf{Sch}
}
\newcommand{\DM}{
\textsf{DM}
}
\newcommand{\DTM}{
\textsf{DTM}
}
\newcommand{\DAM}{
\textsf{DAM}
}
\newcommand{\DATM}{
\textsf{DATM}
}
\newcommand{\Chow}{
\textsf{Chow}
}
\title{On the tensor-triangular geometry of isotropic motives}
\author{Fraser Sparks}
\address{School of Mathematical Sciences, University of Nottingham, Nottingham, NG7 2RD, UK}
\email{fraser.sparks@nottingham.ac.uk}
\date{November, 2025}
\begin{document}

\begin{abstract}
    We investigate the tensor-triangular geometry of the categories of isotropic Tate motives, isotropic Artin motives and isotropic Artin--Tate motives. In particular, we study the categories $\DTM_{gm}(k/k;\F_2)$, $\DAM_{gm}(k/k;\F_2)$ and $\DATM_{gm}(k/k;\F_2)$ where $k$ is a flexible field and we fix $\F_2$-coefficients. In this case, we prove their Balmer spectra are singletons. The proof of this relies on the fact that the categories in question are generated by objects whose morphisms are `nilpotent enough.' Furthermore, we investigate their homological spectra, and verify Balmer's \emph{Nerves of Steel} conjecture in these cases by showing these spaces are also singletons, and we give an explicit description of the homological primes. We also investigate stratification, and we conclude by studying generation properties of these categories---the latter results we see also apply to global (i.e. non-isotropic) motives.
\end{abstract}

\maketitle

\section{Introduction}
Motives are objects that encapsulate the cohomological data of algebraic varieties. They were introduced by Grothendieck in the 1960's, where he introduced the notion of Chow motives---these are motives of smooth projective varieties and direct summands thereof. He envisioned a general theory of (mixed) motives, and much work has been done in trying to understand the motivic mysteries that still prevail to this day. In \cite{Voevodsky:TriangulatedMotives}, Voevodsky introduced his triangulated category of motives over a field $k$, denoted $\DM(k)$. It is a tensor-triangulated category, that is, a triangulated category with a biexact symmetric monoidal structure. It allows one to work with more general motives, as well as allowing the full force of triangulated categories and homological algebra to come into play. This category is well studied, yet there are still numerous open problems and conjectures that stand to this day, and it is much less well-understood than its topological counterpart---the derived category of abelian groups. In particular, $\DM(k)$ is much richer and more complicated than $\D(\Z)$. One particular measure of the complexity of tensor-triangulated (or `tt-') categories is the \emph{Balmer spectrum}, which we describe below.

A tt-category (short for tensor-triangulated category) is a triangulated category with a compatible symmetric monoidal structure. Familiar examples will include derived categories of $R$-modules for a commutative unital ring $R$, and more generally the derived category of a scheme $X$ (its derived category of $\mathcal{O}_X$-modules with quasi-coherent cohomology sheaves). There are numerous other examples, most notably arising within stable homotopy theory and modular representation theory. Safe to say, these tt-categories appear widely throughout pure mathematics. In \cite{Balmer:tt-geometry}, Balmer initiated the study of tt-geometry: this is where one studies tt-categories $\mathcal{K}$ geometrically, through the use of the so-called \emph{Balmer spectrum}, denoted $\Spc(\mathcal{K})$. The Balmer spectrum is a topological space one assigns to tt-categories in an analogous fashion to how one assigns the Zariski spectrum to a commutative ring, and it gives the classification of particular subcategories of your tt-category $\mathcal{K}$. More precisely, there is an explicit bijection, given by supports, between the Thomason subsets of $\Spc(\mathcal{K})$ (that is, those subsets $X \subseteq \Spc(\mathcal{K})$ which can be written as the union of closed sets, each having quasi-compact open complement) and the lattice of radical tt-ideals of $\mathcal{K}$, and so one can distinguish when two objects generate the same tt-ideal: two objects $a,b \in \mathcal{K}$ satisfy $\langle a \rangle = \langle b \rangle$ iff $\supp(a)=\supp(b)$.

With regards to the study of motives, it is an open question in tt-geometry to completely describe the spectrum of (the compact part of) $\DM(k)$. Complete classifications of certain subcategories have been achieved, for instance in \cite{Peter:ArtinTate}, \cite{Gallauer:TateSpectrum} and \cite{Balmer:Artin}. These deal with the subcategories of Tate motives, Artin motives, and Artin--Tate motives, with certain choices of fields and coefficients. With regards to the whole of $\DM_{gm}(k)$, the largest known collection of points of this spectrum comes from so-called \emph{isotropic realisations}
\[\begin{tikzcd}
    \psi_{E,p}:\DM(k) \arrow[r] & \DM(\widetilde{E}/\widetilde{E};\mathbb{F}_p)
\end{tikzcd}\]
to the categories of \emph{isotropic motives} $\DM(\widetilde{E}/\widetilde{E};\mathbb{F}_p)$. Here $E/k$ is a field extension, $p \in \Z_{>0}$ is a prime, and $\widetilde{E}$ denotes the \emph{flexible closure} of the field $E$---see \Cref{remark:flexibledefinition} for the definition of a flexible field and \Cref{remark:flexibleclosureconservative} for the flexible closure of a field. This is achieved by the computation that the zero ideal $(0)$ is prime in these (geometric) isotropic categories whenever $k$ is flexible, so defines a point of $\Spc(\DM_{gm}(k/k;\mathbb{F}_p))$, whence the pullbacks $a_{p,E} \coloneqq \psi_{E,p}^{-1}(0)$ define points of the spectrum of $\DM_{gm}(k)$ (\cite{Vishik:MoravaKTheory}). We are then posed with the natural quest to further our understanding of the tt-geometry of these isotropic localisations in order to aid in the mission of illuminating the spectrum of Voevodsky's category of motives: this is precisely the aim of this paper.

In particular, we investigate the subcategory of \emph{isotropic Tate motives} $\DTM_{gm}(k/k;\F_2)$, as well as the subcategories of \emph{isotropic Artin motives} $\DAM_{gm}(k/k;\F_2)$ and \emph{isotropic Artin--Tate motives} $\DATM_{gm}(k/k;\F_2)$. We refer to the next section for details on the definitions of these categories. Heuristically, $\DTM_{gm}(k/k;\F_2)$ can be viewed as the `cellular' part of the category of isotropic motives, whereas $\DAM_{gm}(k/k;\F_2)$ can be thought of as the `$0$-dimensional' part (and then $\DATM_{gm}(k/k;\F_2)$ is the smallest tt-subcategory of $\DM_{gm}(k/k;\F_2)$ containing both).

\begin{hypothesis}\label{hyp:flexiblechar0}
    Throughout this paper, unless stated otherwise, when working with isotropic categories we will always assume $k$ is flexible and of characteristic zero.
\end{hypothesis}

Let us say a few words on the choice of these categories. The advantage of restricting to the category generated by Tates with $\F_2$-coefficients is that we understand every morphism between the generators of this category: if we change to odd $p$ or consider different isotropic motives, we have very little knowledge on their morphism groups. That is to say, we understand the isotopic motivic cohomology algebra $H^{**}_{iso}(\Spec(k);\F_2)$ of the point $\Spec(k)$, but have few other computations at our disposal. Thanks to results of \cite{Tanania:IsotropicFundamentalGroups}, we immediately find the isotropic motivic cohomology groups of the generators of $\DAM_{gm}(k/k;\F_2)$ and $\DATM_{gm}(k/k;\F_2)$ are also computable and coincide with those of the trivial field extension.  In particular, we know that all non-isomorphisms between any given generators in these categories are $\otimes$-nilpotent. In fact, more is true; they are $\otimes$-nilpotent in a strong sense: the morphism \emph{groups} vanish upon taking high enough $\otimes$-powers, not just the morphisms themselves. This strong nilpotency property turns out to be the key in proving the main theorems of this paper:
\begin{theorem}[\Cref{cor:spectrumofDTM}]
    Let $k$ be a flexible field. Then the Balmer spectrum of isotropic Tate motives $\Spc(\DTM_{gm}(k/k;\F_2))$ is a singleton.
\end{theorem}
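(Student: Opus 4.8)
Writing $\mathcal{K} \coloneqq \DTM_{gm}(k/k;\F_2)$ and $\unit$ for its $\otimes$-unit, the plan is to show that $\Spc(\mathcal{K})$ consists of the single point $(0)$. Recall from the discussion above that $(0)$ is a prime $\otimes$-ideal of $\mathcal{K}$ (by \cite{Vishik:MoravaKTheory}), so $\Spc(\mathcal{K})$ is non-empty with $(0)$ as a generic point. To see that it is the \emph{only} point, it suffices to prove that every non-zero object $a \in \mathcal{K}$ generates $\mathcal{K}$ as a thick $\otimes$-ideal, i.e.\ $\langle a \rangle = \mathcal{K}$: for then any prime $\mathcal{P}$ containing some non-zero $a$ would satisfy $\mathcal{K} = \langle a \rangle \subseteq \mathcal{P}$, contradicting properness of $\mathcal{P}$. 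Since $\langle a \rangle$ is a $\otimes$-ideal and any $\otimes$-ideal containing $\unit$ equals $\mathcal{K}$, everything reduces to showing $\unit \in \langle a \rangle$ for each non-zero $a$.

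I would then invoke the two structural features of $\mathcal{K}$ emphasised in the introduction. First, $\mathcal{K}$ is by construction the thick subcategory generated by the $\otimes$-invertible Tate objects $\unit(n)[m]$, so every object is a retract of a finite iterated cone of such ``cells''; their attaching maps are morphisms between twisted shifts of $\unit$, i.e.\ elements of the bigraded ring $R^{**} \coloneqq \bigoplus_{m,n}\Hom_{\mathcal{K}}(\unit,\unit(n)[m]) = H^{**}_{iso}(\Spec k;\F_2)$. Second, the knowledge of $R^{**}$ coming from \cite{Tanania:IsotropicFundamentalGroups} together with the computation of the isotropic motivic cohomology of a point gives the ``nilpotent enough'' property: every non-isomorphism $\unit(n)[m] \to \unit(n')[m']$ is $\otimes$-nilpotent, and in fact strongly so, in the sense that $\Hom_{\mathcal{K}}\big(\unit(n)[m]^{\otimes N},\unit(n')[m']^{\otimes N}\big) = 0$ for $N \gg 0$ whenever $(n,m)\neq(n',m')$; in particular every homogeneous element of $R^{**}$ of non-zero bidegree is nilpotent.

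The core of the argument is to extract $\unit \in \langle a \rangle$ from these facts. The model case is a two-cell object $a = \cone(f)$ with $f\colon \unit \to \unit(n)[m]$ a non-isomorphism: then $f$ is nilpotent, say $f^N = 0$ in $R^{**}$, and since $f^N$ is the composite of $N$ twists of $f$, an iterated application of the octahedral axiom puts $\cone(f^N)$ in $\langle \cone(f)\rangle = \langle a \rangle$; but $f^N = 0$ forces $\cone(f^N) \cong \unit[1]\oplus\unit(Nn)[Nm]$, and hence $\unit \in \langle a \rangle$. For a general $a$, I would run a dévissage on the number of cells of a finite cell object of which $a$ is a retract: using idempotent-completeness to split off summands, octahedra and passage to suitable tensor powers to manipulate attaching maps, and the \emph{strong} $\otimes$-nilpotency to annihilate the resulting morphisms, one produces inside $\langle a \rangle$ a non-zero object with strictly fewer cells, the process terminating at a twisted shift of $\unit$. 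Non-vanishing throughout is guaranteed by the absence of $\otimes$-zero-divisors, i.e.\ once again by the primality of $(0)$.

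The step I expect to be the main obstacle is precisely this dévissage in the general multi-cell case: keeping track of the filtered/cellular structure of iterated cones and of tensor powers, of the matrices of attaching maps they carry, and of the changes of basis needed to trivialise their off-diagonal parts. This is where the \emph{strength} of the nilpotency hypothesis does genuine work: a change of basis can reintroduce morphisms that are individually $\otimes$-nilpotent, so one really needs entire $\Hom$-groups between high tensor powers of non-isomorphic generators to vanish, not merely each such morphism to be nilpotent. (Alternatively, granting a formality statement for the isotropic motivic cohomology spectrum one could identify $\mathcal{K}$ with the category of perfect complexes of graded $R^{**}$-modules and conclude via the Hopkins--Neeman--Thomason--Balmer classification that $\Spc(\mathcal{K}) \cong \Spec^{h}(R^{**})$, a single point as $R^{**}$ is graded-local with nilpotent maximal ideal; but the route above avoids needing any such input.)
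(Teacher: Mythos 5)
Your reduction to ``every non-zero $a$ satisfies $\langle a\rangle=\mathcal{K}$'' and your identification of strong $\otimes$-nilpotency of morphisms between Tate objects as the key structural input are both correct and match the strategy of the paper. However, your proposed route diverges from the paper's in a way that leaves two genuine gaps, the first of which you yourself flag.

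First, the multi-cell dévissage. You filter a general $a$ by its cells (Tate twists) and attempt to reduce the cell count by octahedra and changes of basis. The paper instead filters $a$ by its \emph{weight} and exploits the fact that the heart of the (isotropic Chow) weight structure is semisimple abelian: every object admits a weight complex with \emph{zero differentials}. This has a decisive consequence that your cell-by-cell approach does not: the connecting map in the top-weight truncation triangle $a_m\to a\to a_{\leq m-1}\xrightarrow{f} a_m[1]$ automatically factors through $a_{\leq m-2}$, so $f$ can be identified (after shift and dualising the target to $\unit$) with a single map $b\to\unit$ where $b$ lies strictly in negative weight. One then shows this $f$ is $\otimes$-nilpotent, and a formal argument (in $\mathcal{K}/\langle a\rangle$ the image of $f$ is simultaneously an isomorphism and $\otimes$-nilpotent, hence $a_m$ dies, hence the quotient is zero since $a_m$ already generates) finishes. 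This sidesteps exactly the book-keeping of ``matrices of attaching maps'' and ``changes of basis'' that you anticipate as the main obstacle; the weight structure gives a canonical two-step reduction rather than a cell-by-cell recursion. Your proposal as written does not complete this step, and completing it along cellular lines would be substantially messier.

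Second, the nilpotency hypothesis itself. You assert that $\Hom\bigl(\unit(n)[m]^{\otimes N},\unit(n')[m']^{\otimes N}\bigr)=0$ for $N\gg0$ when $(n,m)\neq(n',m')$, but two issues arise. (a) This is not quite the condition that is needed: for a general weight-negative $b$ whose graded pieces are \emph{several} Tate twists $T(a_i)[b_i]$, the group $\Hom(b^{\otimes N},\unit)$ is built from groups $\Hom(x_{\psi(1)}\otimes\cdots\otimes x_{\psi(N)},\unit)$ over all functions $\psi$---you need uniform vanishing of all these \emph{mixed} tensor products, not just powers of a single cell; this is precisely what the paper encodes in its ``strongly weight-nilpotent set'' definition. (b) Even the statement you give is asserted, not proved; establishing it is where the real computation lives. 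The paper proves the mixed version by a geometric argument: the bidegrees of the tensor monomials $T_\psi$ have round coordinate growing at most linearly in $N$ while staying at least $N$ steps above the Chow line, whereas the nonzero classes in $H^{**}_{iso}(\Spec k;\F_2)=\Lambda_{\F_2}(r_0,r_1,\ldots)$ that could hit them retreat \emph{exponentially} to the left along successive Chow lines (the point for $r_0\cdots r_{l-1}$ sits on $Chow_l$ at round coordinate roughly $-2^{l+1}$). For large $N$ these regions are disjoint, giving the required vanishing. This ``linear vs.\ exponential'' analysis is the computational heart of the proof and needs to appear explicitly. Your exterior-algebra observation that every positive-bidegree element is nilpotent is true but far weaker than what is needed.

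Your parenthetical ``formality'' alternative is a reasonable guess at a cleaner conceptual route, but the paper does not take it, and note that the relevant spectrum there would be of the \emph{homogeneous} graded primes, not the entire $\Spec$; you would also need to be careful that $R^{**}$ is graded-commutative and that the formality input is genuinely available, which is not established in this context.
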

\begin{theorem}[\Cref{theorem:spectraofDAM/DATM}]
    Let $k$ be a flexible field. The spectra of isotropic Artin motives and isotropic Artin--Tate motives, $\Spc(\DAM_{gm}(k/k;\F_2))$ and $\Spc(\DATM_{gm}(k/k;\F_2)$, are also singletons.
\end{theorem}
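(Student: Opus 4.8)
The plan is to bootstrap off the argument already used for \Cref{cor:spectrumofDTM}: the mechanism there is that a tt-category generated by a set of objects between which every non-invertible morphism is \emph{strongly} $\otimes$-nilpotent---the ambient $\Hom$-groups, not merely the individual maps, vanish after sufficiently many $\otimes$-powers---has no tt-ideals other than $0$ and the whole category, hence a singleton Balmer spectrum. I would first prove the statement for $\DATM_{gm}(k/k;\F_2)$, the larger of the two categories, by exhibiting such a generating set, and then deduce the case of $\DAM_{gm}(k/k;\F_2)$ formally: the inclusion $\DAM_{gm}(k/k;\F_2)\hookrightarrow\DATM_{gm}(k/k;\F_2)$ is a fully faithful tensor-triangulated functor, so it detects $\otimes$-nilpotence of morphisms, whence by Balmer's surjectivity criterion the induced continuous map $\Spc(\DATM_{gm}(k/k;\F_2))\to\Spc(\DAM_{gm}(k/k;\F_2))$ is surjective; a one-point source then forces the (non-empty) target to be a point as well. (Alternatively $\DAM_{gm}(k/k;\F_2)$ can be handled directly by the same method, its generators being the indecomposable isotropic Artin motives without Tate twists.)

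For the generating set of $\DATM_{gm}(k/k;\F_2)$ I would start from the Tate twists $\psi(M(\Spec L))(n)[m]$ of isotropic motives of finite separable extensions $L/k$. Since the motive of an étale $k$-algebra is rigid and self-dual and $M(\Spec L)\otimes M(\Spec L')\cong M(\Spec(L\otimes_k L'))$, every $\Hom$-group between two such objects is, after cancelling the invertible Tate twists by duality, identified with an isotropic motivic cohomology group $H^{*,*}_{iso}(\Spec(L\otimes_k L');\F_2)$ of a finite étale $k$-algebra, which decomposes as a direct sum over the factor fields $L_i$ of $L\otimes_k L'$. By the computations of \cite{Tanania:IsotropicFundamentalGroups} each summand agrees with $H^{*,*}_{iso}(\Spec k;\F_2)$, the bigraded ring whose ``nilpotent enough''-ness is exactly the input to \Cref{cor:spectrumofDTM} (following \cite{Vishik:MoravaKTheory}); in particular its values vanish along every non-trivial ray in the bidegree lattice far enough from the origin. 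This already yields strong $\otimes$-nilpotency of every morphism between generators that differ in bidegree.

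The genuine obstacle is that, unlike the Tate generators, the isotropic Artin motives $\psi(M(\Spec L))$ are not $\otimes$-invertible, and their endomorphism rings do carry non-trivial idempotents: because $L\otimes_k L$ splits into at least two factor fields once $L\neq k$, the ring $\End(\psi(M(\Spec L)))$ is, in bidegree $(0,0)$, a product $\F_2^{r}$ with $r\geq 2$, and such an idempotent is a non-isomorphism that is manifestly \emph{not} $\otimes$-nilpotent. I would resolve this by refining the generating set to the indecomposable summands of the $\psi(M(\Spec L))(n)[m]$, using that $\DATM_{gm}(k/k;\F_2)$ is idempotent-complete. The crux is then to verify that these indecomposable isotropic Artin--Tate motives still satisfy the hypothesis of the criterion: that each has a local endomorphism ring with nilpotent maximal ideal, and that between two non-isomorphic indecomposables the $\Hom$-groups are killed by a $\otimes$-power. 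Both reduce, via the duality computation above, to the structure of $H^{*,*}_{iso}$ of finite étale $k$-algebras over a flexible field with $\F_2$-coefficients, and it is here that the Tanania and Vishik inputs---and the flexibility of $k$---are indispensable; I expect checking this refined nilpotence statement for the indecomposables, rather than any abstract tt-geometric input, to be the main technical work. Once it is in place the criterion gives $\Spc(\DATM_{gm}(k/k;\F_2))=\ast$, and the reduction of the first paragraph gives $\Spc(\DAM_{gm}(k/k;\F_2))=\ast$.
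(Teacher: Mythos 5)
Your core strategy---apply the strong-weight-nilpotency criterion to a generating set of isotropic Artin--Tate motives, reducing the $\Hom$-group computations to isotropic motivic cohomology of finite \'etale $k$-algebras via the Tanania computations and the splitting of $L\otimes_k L'$---is the same as the paper's, and your reduction of $\DAM_{gm}$ to $\DATM_{gm}$ via the fully faithful inclusion and Balmer's surjectivity criterion is also valid (the paper explicitly notes this alternative in a remark following the theorem). However, the ``genuine obstacle'' you identify is a red herring rooted in a mischaracterisation of the criterion. \Cref{theorem:nilpotentspectrum} does not require that every non-invertible morphism between generators be $\otimes$-nilpotent; the strong weight-nilpotency hypothesis (\Cref{def:strongnilpotency}) only concerns vanishing of the groups $\Hom(x_{\psi(1)}\otimes\cdots\otimes x_{\psi(n)},\unit)$ for generators $x_j$ of \emph{strictly negative weight}. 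Idempotents in $\End(M(\Spec L))$ live in weight $0$, so they never enter the condition at all; there is no need to pass to indecomposable summands, nor to verify that endomorphism rings are local with nilpotent maximal ideal---the ``main technical work'' you defer is work that the argument simply does not need.

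The place where the paper does touch something adjacent to your worry is a different, and much milder, point: the hypotheses of \Cref{theorem:nilpotentspectrum} ask that every heart object be a \emph{coproduct} of generators, and a summand of $M(\Spec E)$ is a priori not a coproduct of the $M(\Spec E')$'s. The paper resolves this not by shrinking the generating set but by observing that the heart is semisimple abelian and rigid, so for any nonzero heart object $X$ the unit is a direct summand of $X\otimes X^\vee$, whence $\ideal{X}=\mathcal{K}$; the rest of the argument of \Cref{theorem:DTMsatisfiesthecondition} then goes through unchanged. Your proposal would likely converge to a correct proof if carried out (the cohomology inputs you cite are the right ones), but it is longer and the step you flag as the crux is a detour rather than a genuine obstruction.
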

Indeed, the proofs reduce to demonstrating the Tate objects $\{T(i)[j]\}_{i,j \in \Z} \subseteq \DTM_{gm}(k/k;\F_2)$ (respectively the Artin--Tate objects $\{M(E)(i)[j]\}_{E/k,i,j} \subseteq \DATM_{gm}(k/k;\F_2)$), form what we call a \emph{strongly weight-nilpotent} set of generators, the proof of which not only relies on the strong nilpotency of morphisms between the generators, but also considers the bidegrees in which the nonzero morphisms can live: that they live in discrete yet exponentially large bidegrees is another key ingredient of the proof.

After this, we investigate the homological spectra of these categories: we are able to completely describe these spaces, and in particular we verify Balmer's \emph{Nerves of Steel} conjecture holds for these categories. The homological spectrum of a tt-category is another topological space one associates to a tt-category, denoted $\Spc^h(\mathcal{K})$. Its points are \emph{maximal Serre $\otimes$-ideals} of the (finitely-presented) module category $\textsf{mod}-\mathcal{K}$. In \cite{Balmer:ResidueFields}, Balmer constructs a canonical continuous map
\[\begin{tikzcd}
    \phi:\Spc^h(\mathcal{K}) \arrow[r] & \Spc(\mathcal{K})
\end{tikzcd}\]
which he proves is surjective for $\mathcal{K}$ rigid; the aforementioned conjecture states that this comparison map should be a bijection. We verify this holds for all three of our categories $\DTM_{gm}(k/k;\F_2)$, $\DAM_{gm}(k/k;\F_2)$, $\DATM_{gm}(k/k;\F_2)$, and we explicitly describe the homological primes:

\begin{theorem}[\Cref{cor:NoSholds}]
    Let $k$ be a flexible field. The \emph{Nerves of Steel} conjecture holds for $\DTM_{gm}(k/k;\F_2)$. That is, the homological spectrum $\Spc^h(\DTM_{gm}(k/k;\F_2)$ consists of a single homological prime. It is given by the maximal Serre $\otimes$-ideal $\ker(\widehat{t})^{fp} \subseteq \textsf{mod}-\DTM_{gm}(k/k;\F_2)$, where $t$ is the weight complex functor on $\DTM_{gm}(k/k;\F_2)$ and $\widehat{t}$ denotes the induced map on the module categories. The same holds for the categories $\DAM_{gm}(k/k;\F_2),\DATM_{gm}(k/k;\F_2)$.
\end{theorem}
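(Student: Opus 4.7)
My plan is to deduce the Nerves of Steel conjecture from the singleton description of the Balmer spectra established in the previous two theorems, combined with properties of a candidate homological prime supplied by the weight complex functor. Since the three categories are essentially small and rigid, Balmer's comparison map $\phi:\Spc^h(\mathcal{K}) \to \Spc(\mathcal{K})$ is surjective, so $\Spc^h$ is automatically non-empty and every homological prime maps to the unique tt-prime of $\Spc(\mathcal{K})$. The theorem therefore reduces to producing one specific homological prime and demonstrating its uniqueness.

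The candidate is $\ker(\widehat{t})^{fp}$, where $t$ is the weight complex functor and $\widehat{t}$ is its extension to the module categories. I would first verify that $t$ is well-defined on each of $\DTM_{gm}(k/k;\F_2)$, $\DAM_{gm}(k/k;\F_2)$ and $\DATM_{gm}(k/k;\F_2)$, landing in a category of bounded chain complexes of (Artin--)Tate Chow motives over the isotropic base---this should be available by restricting the usual weight complex functor on $\DM_{gm}$ along the isotropic realisation $\psi_{k,2}$, since the generators involved are pure of weight $0$. Because $t$ is symmetric monoidal and exact, $\widehat{t}$ is a tt-functor on module categories, so $\ker(\widehat{t})^{fp}$ is automatically a Serre $\otimes$-ideal of the finitely presented module category, and properness is immediate because the representable module of the tensor unit is not in its kernel.

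The crucial remaining point is maximality, or equivalently, that any homological prime coincides with $\ker(\widehat{t})^{fp}$. This is where the strongly weight-nilpotent structure of the generators---already central to the previous Balmer spectrum computations---re-enters: the argument should show that any finitely presented module $M$ with $\widehat{t}(M)\neq 0$ must, by repeatedly pushing through tensor powers of the nilpotent non-isomorphisms between generators, force the Serre $\otimes$-ideal it generates together with $\ker(\widehat{t})^{fp}$ to contain the Yoneda image of the unit, making it improper. I expect the main obstacle to be lifting the nilpotence-based argument from the tt-ideal setting to the Serre $\otimes$-ideal setting inside $\textsf{mod}-\mathcal{K}$: the latter must handle subobjects and quotients rather than just cones and triangles, and this is ultimately where the discrete, exponentially sparse bidegree structure of the morphism groups between generators is indispensable, since it furnishes enough room to iterate the nilpotency without the relevant modules becoming uncontrollable. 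Once carried out for the Tate case, the same template should apply verbatim to the Artin and Artin--Tate cases via Tanania's computations, as all three arguments rest on the identical nilpotence input.
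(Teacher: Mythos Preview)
Your setup is correct: the comparison map $\phi$ is surjective, $\ker(\widehat{t})^{fp}$ is a legitimate homological prime lying over $(0)$, and uniqueness is the only remaining issue. However, your proposed attack on uniqueness---working directly inside $\textsf{mod}-\mathcal{K}$ with Serre $\otimes$-ideals and trying to show that any module outside $\ker(\widehat{t})^{fp}$ generates an improper ideal---is not the route the paper takes, and you yourself flag the passage from triangulated tt-ideals to abelian Serre $\otimes$-ideals as ``the main obstacle'' without indicating how to surmount it. That obstacle is real: controlling subobjects and quotients of finitely presented functors via nilpotence of morphisms in $\mathcal{K}$ is substantially more delicate than the cone-based arguments used for the Balmer spectrum, and your sketch does not supply the mechanism.

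The paper sidesteps this entirely by invoking a black-box equivalence (\Cref{prop:NoSiffnilpotency}): for $\mathcal{K}$ with a bounded weight structure, semisimple heart, and enhanced weight complex functor $t$, the homological spectrum is a singleton \emph{if and only if} $t$ detects $\otimes$-nilpotence of morphisms. One direction is Balmer's nilpotence theorem \cite[Thm.~1.1]{Balmer:ResidueFields}; the other is \cite[Thm.~A.1]{Balmer:BigSupport}, using that the target $\K^b(\mathcal{A})$ has no nonzero $\otimes$-nilpotents. This reduces everything to a statement purely about morphisms in $\mathcal{K}$, with no module-category manipulations required. The paper then proves that $t$ detects $\otimes$-nilpotence (\Cref{theorem:weightcomplexfunctordetectsnilpotence}) by a short argument: dualise to assume the target is $\unit$, use weight truncation and the hypothesis $t(f)=0$ to factor $f$ through $x_{\leq -1}\to\unit$, and then appeal directly to the strong weight-nilpotence already established in the proof of \Cref{theorem:nilpotentspectrum}. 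So the nilpotence input you correctly identify as central is used, but at the level of morphisms in $\mathcal{K}$ rather than modules over $\mathcal{K}$, and the lift to the homological spectrum is outsourced to Balmer's general machinery.
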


These results follow an analysis of the `big' versions of the categories in question. Using the results developed during that section, we are also able to easily deduce that the big categories $\DTM(k/k;\F_2)$, $\DATM(k/k;\F_2)$ are \emph{not} stratified in the sense of \cite{Barthel:Stratification}:

\begin{theorem}[\Cref{prop:stratification}]
    Let $k$ be a flexible field. Neither of the categories $\DTM(k/k;\F_2)$, $\DATM(k/k;\F_2)$ are stratified.
\end{theorem}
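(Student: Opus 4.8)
The plan is to reduce the statement to a minimality assertion using the stratification machinery of \cite{Barthel:Stratification}, and then feed in the structural analysis of the big categories developed above.

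First I would reduce to minimality. By \Cref{cor:spectrumofDTM} and \Cref{theorem:spectraofDAM/DATM} the Balmer spectra $\Spc(\DTM_{gm}(k/k;\F_2))$ and $\Spc(\DATM_{gm}(k/k;\F_2))$ are singletons, and the big categories $\DTM(k/k;\F_2)$, $\DATM(k/k;\F_2)$ are rigidly-compactly generated with these as their compact parts. Over a one-point spectral space the local-to-global principle holds vacuously and the unique local category is the whole category, so in the framework of \cite{Barthel:Stratification} being stratified is equivalent to being \emph{minimal}: possessing no localizing $\otimes$-ideal other than $0$ and the ambient category. Hence it suffices to exhibit, in each of $\DTM(k/k;\F_2)$ and $\DATM(k/k;\F_2)$, a nonzero proper localizing $\otimes$-ideal; the assignment of supports, from localizing $\otimes$-ideals to the two-element lattice of subsets of the point, then fails to be a bijection.

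Next I would produce such an ideal as the kernel of the big weight complex functor. Extending the weight complex functor on the compact part to a coproduct-preserving tt-functor $\widehat{t}\colon \DTM(k/k;\F_2)\to\mathcal D$ with values in a derived category built from the big heart of the weight structure (this is part of the analysis of the big categories above), the kernel $\ker(\widehat t)$ is a localizing $\otimes$-ideal, and it is proper because $\widehat t$ sends $\unit$ to a nonzero object. The crucial point is that $\ker(\widehat t)\neq 0$, i.e.\ that $\widehat t$ is not conservative on the big category. Here the description of the Tate generators re-enters: the higher isotropic motivic cohomology of $\Spec k$ supplies an infinite family of $\otimes$-nilpotent maps between Tate twists living in arbitrarily large, exponentially sparse bidegrees, so that the graded endomorphism algebra of $\bigoplus_i T(i)$ is, after regrading, non-Noetherian with every positive-degree element nilpotent; derived categories of such algebras are never minimal. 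Concretely I would build a nonzero weight-acyclic object --- for instance a ``residue field'' object $\hocolim_n \unit/(\alpha_1,\dots,\alpha_n)$ formed along a suitable infinite family $\{\alpha_i\}$ of these nilpotent classes, verified to be both nonzero and sent to a contractible complex by $\widehat t$ --- and conclude $\ker(\widehat t)\neq 0$. The argument for $\DATM(k/k;\F_2)$ is identical, now using the Artin--Tate generators $\{M(E)(i)[j]\}$ and the corresponding weight structure.

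The main obstacle is precisely the non-conservativity in the previous paragraph: one must control the big heart well enough to certify that the witnessing object is genuinely nonzero and genuinely weight-acyclic, rather than merely of unbounded weight. This is the analogue, in the present motivic setting, of the classical phenomenon that derived categories of non-Noetherian rings with one-point spectrum can fail to be stratified, and it is driven entirely by the presence of infinitely many Tate twists --- which is why the statement omits $\DAM(k/k;\F_2)$: that category carries no Tate twists and is in fact stratified.
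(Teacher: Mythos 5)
Your reduction to minimality via \cite{Barthel:Stratification} (Theorem A) and the decision to exhibit $\ker(t)$ as a nontrivial localising $\otimes$-ideal both match the paper's strategy. The gap is in the construction of the witnessing weight-acyclic object. The Koszul/residue-field object $\hocolim_n \unit/(\alpha_1,\dots,\alpha_n)$ does \emph{not} lie in $\ker(t)$: since $t$ is a tensor functor and $t(\alpha_i)=0$, the cone $\unit/\alpha_i$ has $t(\unit/\alpha_i)=\cone(0)\simeq t(\unit)\oplus(\text{shifted source of }\alpha_i)$, which is nonzero, and the transition map $\unit/(\alpha_1,\dots,\alpha_n)\to\unit/(\alpha_1,\dots,\alpha_{n+1})$ is obtained by tensoring with the unit map $\unit\to\unit/\alpha_{n+1}$, which under $t$ becomes a \emph{split inclusion}. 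So $t(\hocolim_n\unit/(\alpha_1,\dots,\alpha_n))$ is a sequential colimit along split inclusions of nonzero objects and is therefore nonzero. What works---and what \Cref{prop:tisnotconservative} actually does---is to take the transition maps \emph{themselves} to be the $\otimes$-nilpotent classes: set $\Omega=\hocolim(T\xrightarrow{r_0}T[-1]\xrightarrow{r_1[-1]}\cdots)$ along the duals of the Milnor operations. Each $r_i$ has $t(r_i)=0$ (its source and target live in distinct pure weights), so $t(\Omega)=\hocolim t(\cdots)=0$, while $\Omega\neq 0$ because the cumulative products $r_0r_1\cdots r_l$ never vanish in the exterior algebra $H^{**}_{iso}(\Spec k;\F_2)$. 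The phenomenon is about annihilating the \emph{maps} in a colimit diagram, not about taking cones of nilpotent maps.

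Two smaller points. The heuristic ``derived categories of non-Noetherian graded algebras with nilpotent positive part are never minimal'' is not a theorem and cannot substitute for the explicit construction. And your closing claim that $\DAM(k/k;\F_2)$ ``is in fact stratified'' is not established: the paper proves only that $t$ is conservative on $\DAM(k/k;\F_2)$ (\Cref{prop:tisconservativeonArtins}), so the obvious candidate $\ker(t)$ for a nontrivial localising ideal disappears, but whether $\DAM(k/k;\F_2)$ is actually stratified is left open, reduced (via \cite{Barthel:Descent}) to a generation question about $U(\unit)$.
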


The proof is essentially an immediate consequence of \Cref{prop:tisnotconservative} which states that the kernel of the weight complex functor on these categories is nontrivial, i.e. the functor fails to be conservative. Moreover, we can find infinitely many localising ideals in these categories---see \Cref{remark:localisingideals}. We show however that the functor \emph{is} conservative on $\DAM(k/k;\F_2)$---\Cref{prop:tisconservativeonArtins}.

The final section of this paper concerns generation properties of our categories of motives. We can ask: what are the `dimensions' of our categories? Of course, one then needs to decide what is meant by dimension. \Cref{cor:spectrumofDTM} tells us the Krull dimension of $\DTM_{gm}(k/k;\F_2)$ is $0$ (that is, the Krull dimension of its Balmer spectrum is $0$), and it is a natural question to ask what the Rouquier dimension of this category is. The Rouquier dimension of a (not necessarily tensor) triangulated category $\mathcal{K}$ is the minimum $n$ for which there exists an object $a \in \mathcal{K}$ such that $\ideal{a}^\triangle_{n+1}=\mathcal{K}$, and is defined to be $\infty$ if no such $a,n$ exist. Here $\ideal{a}^\triangle_m$, for a positive integer $m$, is defined (recursively) to be the smallest full subcategory of $\mathcal{K}$ generated by $a$ closed under finite sums, summands and $m-1$ many extensions (so that $\ideal{a}^\triangle_1$ is the thick additive subcategory generated by $a$). We show the Rouquier dimension of $\DTM_{gm}(k/k;\F_2)$ is infinite; moreover, we show that the category is not classically finitely generated (meaning there is no finite set---equivalently, single object---that generates the category as a triangulated category):

\begin{theorem}[\Cref{prop:DTMnotfinitelygenerated}]
    Let $k$ be a flexible field. The category $\DTM_{gm}(k/k;\F_2)$ is not classically finitely generated. That is, for every object $X \in \DTM_{gm}(k/k;\F_2)$ the smallest triangulated subcategory generated by $X$ is a proper subcategory. The same holds for $\DATM_{gm}(k/k;\F_2)$.
\end{theorem}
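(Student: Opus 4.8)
The plan is to argue by contradiction: assuming some single object $X$ generated $\DTM_{gm}(k/k;\F_2)$ as a triangulated category, I will produce a Tate object not contained in the smallest thick subcategory generated by $X$. The starting point is a soft reduction. Since $\DTM_{gm}(k/k;\F_2)$ is by construction the thick subcategory of $\DM_{gm}(k/k;\F_2)$ generated by $\{T(i)[j]\}_{i,j\in\Z}$, and the thick subcategory generated by a set of objects is the union, over its finite subsets, of the thick subcategories those subsets generate---closure under summands holding because a summand of an object generated by a finite set $\mathcal{G}'$ is again generated by $\mathcal{G}'$, and closure under cones because the cone of a map from an object generated by $\mathcal{G}_1$ to one generated by $\mathcal{G}_2$ is generated by $\mathcal{G}_1\cup\mathcal{G}_2$---every object $X$ lies in $\mathcal{T}_S$, the smallest thick subcategory containing the finitely many Tate objects $T(i)$ with $i$ in some finite set $S\subset\Z$. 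It therefore suffices to show that $T(j)\notin\mathcal{T}_S$ whenever $j\notin S$.

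To prove this I would pass to the heart $\mathcal{H}$ of the weight structure on $\DTM_{gm}(k/k;\F_2)$ underlying the weight complex functor $t$. The crucial input, coming from the computation of the isotropic motivic cohomology of the point, is that $\Hom(T(i),T(j)[n])=H^{n,\,j-i}_{iso}(\Spec k;\F_2)$ vanishes for $n\neq 0$, so that $\{T(i)\}_{i\in\Z}$ is an orthogonal (in particular negative) set generating $\mathcal{H}=\mathrm{Kar}\bigl(\mathrm{add}\{T(i):i\in\Z\}\bigr)$, and that $\End(T(i))=H^{0,0}_{iso}(\Spec k;\F_2)=\F_2$. A lifting-of-idempotents argument, using that the endomorphism ring of any finite sum $\bigoplus_i T(i)^{\oplus n_i}$ is a finite product of matrix algebras over $\F_2$ modulo its radical and hence semisimple, identifies $\mathcal{H}$ as a Krull--Schmidt category whose indecomposable objects are precisely the pairwise non-isomorphic $T(i)$. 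Consequently every object of $K^b(\mathcal{H})$ has a minimal representative, unique up to isomorphism, and the finite set of indecomposables occurring in it is an isomorphism invariant. Now the objects $Y$ for which $t(Y)$ is homotopy equivalent to a complex with all terms in $\mathrm{add}\{T(i):i\in S\}$ form a thick subcategory---using that $t$ is additive and carries distinguished triangles to distinguished triangles up to homotopy---which contains every $T(i)$ with $i\in S$ and hence contains $\mathcal{T}_S$; whereas $t(T(j))$ is the complex $T(j)$ concentrated in degree $0$, already minimal, involving the indecomposable $T(j)\notin\{T(i):i\in S\}$. Therefore $T(j)\notin\mathcal{T}_S$, giving the desired contradiction.

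For $\DATM_{gm}(k/k;\F_2)$ the argument runs identically, with the generating set $\{M(E)(i)[j]\}$ of twisted motives of finite field extensions $E/k$ in place of the Tate objects: every object lies in the thick subcategory generated by finitely many of the $M(E)(i)$; by the results of \cite{Tanania:IsotropicFundamentalGroups} the Hom-groups among these coincide with those of the trivial extension and are in particular concentrated in cohomological degree zero, so again one obtains a weight structure with Krull--Schmidt heart $\mathcal{H}$ (the pertinent endomorphism rings being finite-dimensional $\F_2$-algebras, hence semiperfect). A single object then meets only finitely many indecomposables of $\mathcal{H}$, whereas $\mathcal{H}$ already contains the infinitely many pairwise non-isomorphic indecomposables $T(j)=M(k)(j)$, $j\in\Z$, none of which is a direct summand of $M(E)(i)$ for $i\neq j$; the same contradiction follows. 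In particular, since classical finite generation already fails, the Rouquier dimension of both categories is infinite.

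The step I expect to be the main obstacle is this structural input: constructing the weight structure, identifying its heart with the Karoubi envelope of the additive hull of the basic objects, and verifying that this envelope is genuinely Krull--Schmidt. It rests on the full strength of the isotropic motivic cohomology computations---that the graded Hom-algebras between the basic objects are concentrated in cohomological degree zero, so that these objects form an orthogonal generating set---together with a routine semiperfectness argument. Once this is in place, detecting $T(j)\notin\mathcal{T}_S$ through the minimal weight complex, and the soft reduction to finitely generated thick subcategories, are formal.
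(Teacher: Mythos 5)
Your overall strategy---reduce to showing $T(j)\notin\mathcal{T}_S$ for finite $S\ni\hspace{-2pt}\not\hspace{2pt} j$ by tracking which heart-indecomposables can appear in a minimal weight complex---is sound and genuinely different from the paper's. But there is a concrete error in the key computation you invoke. You assert that $\Hom(T(i),T(j)[n])=H^{n,\,j-i}_{iso}(\Spec k;\F_2)$ vanishes for $n\neq 0$, and conclude that the $T(i)$ form an orthogonal generating set for the heart $\mathcal{H}$. This is false. By the theorem on the isotropic motivic cohomology of the point (quoted in the paper), $H^{*,*}_{iso}(\Spec k;\F_2)\simeq\Lambda_{\F_2}(r_0,r_1,\ldots)$ with $r_i$ in bidegree $(1-2^i)[1-2^{i+1}]$; in particular $r_0\in H^{-1,0}_{iso}$ is a nonzero element of $\Hom(T,T[-1])$, so $\Hom(T(i),T(i)[-1])\neq 0$ for every $i$. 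The objects $T(i)$ form a \emph{negative} set but not an orthogonal one, and they are not the objects of the heart of the Chow weight structure. The heart is in fact $\operatorname{add}\{T(i)[2i]\mid i\in\Z\}$: those are the objects lying on the Chow line, and for those the relevant Hom-groups $\Hom(T(i)[2i],T(j)[2j][n])=H^{2(j-i)+n,\,j-i}_{iso}$ vanish unless $n=0$ and $i=j$, giving $\F_2$. The same correction is needed in your $\DATM$ paragraph, where you claim Hom-groups among the generators $M(E)(i)$ are concentrated in cohomological degree zero.

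With this fix your argument does go through: the corrected heart is semisimple abelian, hence Krull--Schmidt, with pairwise non-isomorphic indecomposables $\{T(i)[2i]\}_{i\in\Z}$, and $t(T(j))$ is the one-term complex $T(j)[2j]$ concentrated in a single degree, which cannot appear in a minimal complex with terms in $\operatorname{add}\{T(i)[2i]:i\in S\}$ for $j\notin S$. The paper's proof is shorter and avoids the weight complex functor altogether: it proves a general lemma that a bounded weight structure plus an infinite family $\{x_n\}\subseteq\mathcal{K}^\heartsuit$, such that each heart object is eventually orthogonal to the $x_n$ in all shifts, precludes classical finite generation. The paper then takes $x_n=T(n)[2n]$ and uses only the left-half-plane concentration of $H^{*,*}_{iso}(\Spec k;\F_2)$ (the cohomology vanishes in positive weight). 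Your approach needs the Krull--Schmidt structure of the heart and a triangulated weight complex functor; the paper's needs neither, just the finite weight decomposition and the Hom-vanishing. Both work, but the paper's route is more economical and applies more readily to $\DM_{gm}(k)$ where the heart is not semisimple.
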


Furthermore, we see that the above result is not specific to the local categories $\DM_{gm}(k/k;\F_2)$. Indeed, we can show that the category $\DM_{gm}(k)$ (not assuming $k$ flexible) is also not classically finitely generated; although the result could easily be well-known, we include the result anyway as it may be of interest to some readers---see \Cref{prop:DMnotfinitelygenerated}.
\section{Preliminaries}
These following subsections are dedicated to briefly introducing the main concepts and definitions that concern this paper. The well-acquainted reader may safely skip to the next section, or if they prefer they can use this section as a reference if an unfamiliar concept arises amidst reading the latter sections.
\subsection*{Notation} In this paper our triangulated categories' shift/suspension functors will always be denoted $(-)[1]$, i.e. we are adopting (co)homological notation, as our applications are for categories for which this is more appropriate. $\mathcal{T}$ will usually be used to denote a `big' tt-category (i.e. a rigidly-compactly generated tt-category), whereas $\mathcal{K}$ will be reserved for a `small' one, typically the rigid-compact part of a big one, i.e. $\mathcal{K}=\mathcal{T}^c$ for some $\mathcal{T}$ as detailed. $\mathcal{A}$ will be used for abelian categories, which will typically find themselves semisimple. $\Hom_\mathcal{C}(-,-)$ will denote the morphism groups between objects in $\mathcal{C}$ (all our categories are additive): we will freely and frequently drop the subscript $\mathcal{C}$ when it is clear from context which category we are working in, and when ambiguity is improbable.

\subsection{Motives and isotropisation}
There are various constructions of categories of motives, but the one we care about in this paper is Voevodsky's (tensor) triangulated category $\DM(k;R)$ over a field $k$ with coefficients in some commutative ring $R$. To define this category, we must first consider the category $\DM^{eff}(k;R)$ of \emph{effective} motives, defined as the $\A^1$-localisation of the derived category of Nisnevich sheaves with transfers with values in $R$-modules, see \cite{Voevodsky:TriangulatedMotives} and \cite{Cisinski:TriangulatedMixedMotives}. We denote the motivic functor by $M:\Sch_k \rightarrow \DM^{eff}(k;R)$, where $\Sch_k$ is the category of schemes of finite type over $\Spec(k)$, which associates to such a scheme $X$ its motive $M(X)$. We denote by $T$ the motive of $\Spec(k)$, which is the tensor-unit of the monoidal structure, and we may refer to it as the \emph{trivial Tate motive}.
\begin{example}
    The motive $M(\PP^1)$ of the projective line decomposes as $T \oplus T(1)[2]$; in other words, we define the motive $T(1)[2]$ to be the object $\cone(T \rightarrow M(\PP^1))$ where the map $T \rightarrow M(\PP^1)$ is induced by the inclusion of a $k$-rational point of $\PP^1$. The direct sum decomposition is a consequence of a splitting of the structure morphism $\pi:\PP^1 \rightarrow \Spec(k)$:
    \[\begin{tikzcd}
        \PP^1 \arrow[r, "\pi"] & \arrow[l, dashrightarrow, bend right=50, swap, "s"] \Spec(k) \arrow[loop right, "\id"]
    \end{tikzcd}\]
\end{example} We define the category $\DM(k;R)$ to be the category obtained from $\DM^{eff}(k;R)$ by formally inverting $T(1)[2]$. This contains its effective version as a full subcategory (\cite[Thm.~4.3.1]{Voevodsky:TriangulatedMotives}), and by abuse of notation we also denote by $M$ the motivic functor
\[\begin{tikzcd}
    M:\Sch_k \arrow[r] & \DM^{eff}(k) \arrow[r] & \DM(k).
\end{tikzcd}\]
$\DM(k;R)$ is rigidly-compactly generated whenever the exponential characteristic of $k$ is invertible in $R$ (see \cite[§3]{Gallauer:TateSpectrum} and the references therein: \cite{Cisinski:TriangulatedMixedMotives}, \cite{Cisinski:IntegralMixedMotives}, \cite{Kelly:ldhdescent}), and its full subcategory of compact objects $\DM_{gm}(k;R)$ is referred to as the category of \emph{geometric motives} (indeed, this category is the full thick triangulated subcategory of $\DM(k;R)$ generated by the motives $M(X)(n) \coloneqq M(X) \otimes T(n)$ for smooth varieties $X$ over $k$ and integers $n \in \Z$).

For a given prime $p>0$ and field $k$, the category of \textbf{isotropic motives} over $k$, as introduced by Vishik \cite{Vishik:IsotropicMotives} following ideas of Bachmann \cite{Bachmann:Picard}, is defined as the Verdier quotient of $\DM(k;\F_p)$ where we quotient out by the localising subcategory generated by the motives of \emph{$p$-anisotropic varieties}. These are varieties $X$ over $k$ for which the degrees of all closed points $x \in X$ are divisible by $p$. It is denoted $\DM(k/k;\F_p)$, and its compact part is denoted $\DM_{gm}(k/k;\F_p)$ (which is still generated by the motives of smooth varieties, their Tate twists, and the summands of such objects).

\begin{remark}
    Intuitively, the category of isotropic motives should be viewed as taking our category of motives and trying to make it closer in complexity to the topological counterpart. Indeed, spaces $X$ with no rational points are an algebro-geometric phenomenon, and don't exist in topology. So, it seems natural to annihilate the motives of such `non-topological' spaces to get closer to the world of topology. Of course, the category of topological motives with $\F_p$-coefficients is the bounded derived category of finite-dimensional $\F_p$-vector spaces, $\D^b(\F_p)$, which is as simple as can be.
\end{remark}

\begin{remark}\label{remark:flexibledefinition}
    At this point it is worth noting it is crucial what our field $k$ is when we perform this isotropisation. If $k=\overline{k}$ is algebraically closed then there are no $p$-anisotropic varieties for any $p$ (every closed point is rational, i.e. has degree $1$), so $\DM(k/k;\F_p)=\DM(k;\F_p)$ in this case. So, we must choose a particularly nice class of fields to work with. It turns out that the correct fields to work over are so-called \textbf{flexible fields}. These are fields $k$ for which there is some other field $k_0$ and an isomorphism $k=k_0(t_1,t_2,\ldots)$, where $t_i$ are transcendental variables. In words, this means we say a field $k$ is flexible if it is a purely transcendental extension of some other field with infinite transcendence degree.
\end{remark}
\begin{remark}\label{remark:flexibleclosureconservative}
    Thanks to \cite[Rmk.~1.2]{Vishik:IsotropicMotives}, we know base-changing from a field $k$ to its flexible closure $\widetilde{k} \coloneqq k(t_0,t_1,\ldots)$ is conservative on categories of motives, so we are comforted upon passage to these fields.
\end{remark}
\begin{remark}\label{remark:smashingisotropic}
    As in \cite{Vishik:IsotropicMotives}, an equivalent formulation is the view the category $\DM(k/k;\F_p)$ as the localisation $\Upsilon_{k/k} \otimes \DM(k;\F_p)$, where $\Upsilon_{k/k}$ is the complementary $\otimes$-projector of the motive of the Čech simplicial scheme $\mathcal{X}_\mathbf{Q}$ associated to the disjoint union $\mathbf{Q}$ of all $p$-anisotropic varieties over $k$. That is, $\Upsilon_{k/k}$ is defined via the distinguished triangle
    \[\begin{tikzcd}
        \mathcal{X}_\mathbf{Q} \arrow[r] & T \arrow[r] & \Upsilon_{k/k} \arrow[r] & \mathcal{X}_\mathbf{Q}[1].
    \end{tikzcd}\]
    That $\Upsilon_{k/k}$ is an idempotent follows from the fact that $\mathcal{X}_\mathbf{Q}$ is (or more precisely, that the canonical map $\mathcal{X}_\mathbf{Q} \rightarrow T$ becomes an isomorphism after tensoring with $\mathcal{X}_\mathbf{Q}$).
\end{remark}
\begin{remark}
    In the category $\DM(k;R)$ (resp. $\DM(k/k;\F_p)$) we refer to the localising subcategory generated by the Tate twists $T(i)[j]$ for $i,j \in \Z$ as the subcategory of \textbf{Tate motives} (resp. \textbf{isotropic Tate motives}), and denote it $\DTM(k;R)$ (resp. $\DTM(k/k;\F_p)$). We refer to the localising subcategory of $\DM(k;R)$ (resp. $\DM(k/k;\F_p)$) generated by the motives of $0$-dimensional varieties (that is, the motives of finite field extensions $E/k$ and their tensor products) as the subcategory of \textbf{Artin motives} (resp. \textbf{isotropic Artin motives}), and denote it $\DAM(k;R)$ (resp. $\DAM(k/k;\F_p)$). Finally, by the category of \textbf{Artin--Tate motives} (resp. \textbf{isotropic Artin--Tate motives}) we mean the localising subcategory generated by Artin motives and their Tate twists, and denote it $\DATM(k;R)$ (resp. $\DATM(k/k;\F_p)$). Clearly the latter contains the former two as full subcategories. In all cases, an additional subscript $(-)_{gm}$ denotes the compact part of the category, which we may also refer to as the geometric part.
\end{remark}

There are many results which illuminate these isotropic categories' simplicity when one works over a flexible field. We recall from \cite{Vishik:MoravaKTheory} that, given an oriented cohomology theory $A^*$ on algebraic varieties, we can define its \textbf{isotropic version} $A^*_{iso}$ as follows. First, we declare a smooth projective variety $Y$ over $k$ to be \textbf{$A$-anisotropic} if the pushforward
\[\begin{tikzcd}
    \pi_*:A_*(Y) \arrow[r] & A \coloneqq A_*(\Spec(k))
\end{tikzcd}\]
is identically zero, where $\pi:Y \rightarrow \Spec(k)$ is the structure morphism of $Y$. We then define the isotropic theory $A^*_{iso}$ to be quotient
\[A^*_{iso}(X) \coloneqq A^*(X)/\ideal{\alpha \mid \alpha=f_*(\beta) \text{ for some }\beta \in Y, \text{ with } Y \text{ $A$-anisotropic and } f:Y \rightarrow X \text{ proper}}.\]
We define \textbf{isotropic Chow groups} $Ch^*_{iso}$ to be the isotropic theory associated to the cohomology theory $Ch^* \coloneqq CH^*/p$ (for a given choice of prime $p$). Additionally, given a theory $A^*$, one can also form its \emph{numerical theory} $A^*_{Num}$ in the usual manner, that is, the quotient of $A^*$ by those classes $\alpha \in A^*(X)$ for which the pairing
\[\begin{tikzcd}
    \langle \alpha,- \rangle:A^*(X) \arrow[r, "\alpha \cdot (-)"] & A^{*}(X) \arrow[r, "\pi_*"] & A
\end{tikzcd}\]
is identically zero. It is not hard to show isotropic classes are numerically trivial, so there is a canonical factorisation
\[\begin{tikzcd}
    A^* \arrow[d, twoheadrightarrow] \arrow[rd, twoheadrightarrow] \\
    A^*_{iso} \arrow[r, twoheadrightarrow] & A^*_{Num}
\end{tikzcd}\]
of the numerical quotient $A^* \rightarrow A^*_{Num}$ through the isotropic one. The main result of \cite{Vishik:MoravaKTheory} states that isotropic Chow groups coincide with numerical Chow groups (all mod $p$) when the ground field $k$ is flexible. An immediate consequence is the following (collating Corollary~5.4, Proposition~5.7 and Lemma~5.9 of \textit{loc. cit.}):

\begin{theorem}\label{thmref:heartofgeometricisotropicmotives}
    Let $k$ be a flexible field. Then there is a bounded weight structure on $\DM_{gm}(k/k;\F_p)$ induced from the Chow weight structure on $\DM_{gm}(k;\F_p)$. Its heart is canonically equivalent to the category of numerical Chow motives, $\Chow_{Num}(k;\F_p)$. Moreover, this latter category is semisimple abelian (it is abelian, and every short exact sequence splits).
\end{theorem}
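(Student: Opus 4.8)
The plan is to derive the statement from two facts we may take as given --- the bounded Chow weight structure on $\DM_{gm}(k;\F_p)$ whose heart is the category $\Chow(k;\F_p)$ of Chow motives, and Vishik's theorem (\cite{Vishik:MoravaKTheory}) that $Ch^{*}_{iso}=Ch^{*}_{Num}$ over a flexible field --- by means of a descent principle for weight structures along Verdier quotients. First I would reinterpret $\DM_{gm}(k/k;\F_p)$ as the Verdier quotient $\DM_{gm}(k;\F_p)/\mathcal{D}$, with $\mathcal{D}$ the thick subcategory generated by the motives of $p$-anisotropic varieties. Using resolution of singularities and the Gysin and localisation triangles available in characteristic zero, one reduces to showing that $\mathcal{D}$ is already generated by the motives of \emph{smooth projective} $p$-anisotropic varieties and their summands; all of these lie in the weight-heart $\Chow(k;\F_p)\subseteq\DM_{gm}(k;\F_p)$. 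A small point to verify here is that, for smooth projective $Y$, the motive $M(Y)$ lies in $\mathcal{D}$ exactly when $Y$ is $Ch$-anisotropic in the sense of the isotropic construction --- a comparison of degrees of closed points with the vanishing of the pushforward on $Ch_{0}$.

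Next I would invoke the localisation theorem for weight structures (in the style of Bondarko): when a thick subcategory $\mathcal{D}$ of a triangulated category carrying a bounded weight structure is generated, as a thick subcategory, by objects of the heart, the Verdier quotient inherits a bounded weight structure whose heart is the idempotent completion of the additive quotient of the heart by the ideal of morphisms factoring through $\mathcal{D}$. Applied to $\DM_{gm}(k;\F_p)$ and the $\mathcal{D}$ above, this yields the asserted bounded weight structure on $\DM_{gm}(k/k;\F_p)$ and identifies its heart with $\mathrm{Kar}\big(\Chow(k;\F_p)/\mathcal{N}_{an}\big)$, where $\mathcal{N}_{an}$ is the ideal of correspondences factoring through $Ch$-anisotropic motives.

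It then remains to recognise this quotient as numerical motives. For smooth projective $X,Y$ the morphism group $\Hom(M(X)(m)[2m],M(Y)(n)[2n])$ in $\Chow(k;\F_p)$ is a Chow group $Ch^{*}(X\times Y)$ of the product, and passing to the quotient by $\mathcal{N}_{an}$ replaces it, essentially by the definition of the isotropic theory, with $Ch^{*}_{iso}(X\times Y)$; Vishik's theorem rewrites this as $Ch^{*}_{Num}(X\times Y)$, the correspondences modulo numerical equivalence. Hence $\Chow(k;\F_p)/\mathcal{N}_{an}\simeq\Chow_{Num}(k;\F_p)$ as additive categories, and since numerical motives already form a pseudo-abelian category the Karoubi completion is superfluous, so the heart is exactly $\Chow_{Num}(k;\F_p)$.

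Finally, for semisimplicity I would run the classical trace-pairing argument, following Jannsen but adapted to $\F_p$-coefficients: $\Chow(k;\F_p)$ is rigid via Poincar\'e duality, hence so is $\Chow_{Num}(k;\F_p)$, so there is a categorical trace $\operatorname{tr}\colon\End(M)\to\End(\unit)=\F_p$, and the bilinear form $(f,g)\mapsto\operatorname{tr}(fg)$ on the finite-dimensional $\F_p$-algebra $\End_{\Chow_{Num}}(M)$ is non-degenerate by construction of numerical equivalence. One checks the Jacobson radical of $\End_{\Chow_{Num}}(M)$ is trace-orthogonal to everything, hence zero, so every endomorphism algebra is finite-dimensional semisimple, and a pseudo-abelian $\F_p$-linear category with semisimple endomorphism algebras is semisimple abelian. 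The step I expect to be the main obstacle is the weight-structure descent together with the compatibility bookkeeping preceding it: one must set up the localisation of weight structures carefully enough to guarantee the induced structure is bounded and that its heart is the correct additive quotient, and one must match the thick subcategory generated by motives of $p$-anisotropic varieties with the ideal of $Ch$-anisotropic correspondences so that Vishik's computation of $Ch^{*}_{iso}$ applies verbatim; the semisimplicity, by contrast, is essentially routine, the only mild subtlety being to run the radical-vanishing step over $\F_p$ rather than in characteristic zero.
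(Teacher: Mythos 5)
The paper does not actually prove this theorem: it quotes it directly from Vishik, collating Corollary~5.4, Proposition~5.7 and Lemma~5.9 of \cite{Vishik:MoravaKTheory}, as an immediate consequence of the main theorem there, $Ch^*_{iso}=Ch^*_{Num}$ over flexible fields. Your proposal unpacks that derivation from the ground up, and the overall route --- weight-structure descent along the Verdier quotient, identification of the heart-level morphism ideal with numerically trivial correspondences via Vishik's theorem, then a Jannsen-type semisimplicity argument --- is the right one. Two of the steps you flag as minor are, however, where the real content sits.

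First, reducing $\mathcal{D}$ to motives of \emph{smooth projective} $p$-anisotropic varieties is subtler than ``resolution plus Gysin.'' Resolutions of anisotropic varieties are again anisotropic (residue-field degrees can only grow along dominant morphisms), but a smooth projective compactification of a smooth anisotropic variety can acquire low-degree boundary points, so one cannot simply compactify. The honest route is the $\otimes$-ideal / \v{C}ech-simplicial description of the isotropic localisation (\Cref{remark:smashingisotropic}), which is part of what Vishik's Proposition~5.7 establishes. Second, and more substantively, your semisimplicity paragraph is not formal as stated: ``the Jacobson radical is trace-orthogonal to everything'' does not follow from rigidity alone, because in $\Chow_{Num}(k;\F_p)$ there is no fibre functor to graded $\F_p$-vector spaces, and a nilpotent endomorphism of an abstract rigid object need not have vanishing categorical trace. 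Jannsen's argument (which works equally well over $\F_p$ in characteristic zero) goes through the homological quotient: set $B$ equal to the image of $\End_{\Chow(k;\F_p)}(M)$ in $\End_{\F_p}(H^*_{\text{\'et}}(X;\F_p))$, note that there the trace is an honest (signed) matrix trace so that $\mathrm{rad}(B)$ lies in the kernel of the trace form, and that $\End_{\Chow_{Num}}(M)$ is $B$ modulo the kernel of that form --- hence a quotient of the semisimple algebra $B/\mathrm{rad}(B)$. This is also what gives finite-dimensionality of the Hom groups, an input you use without comment. The argument does close, but a Weil-type cohomology with $\F_p$ coefficients is an essential ingredient; the step is not ``essentially routine.''
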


Another one of the key results that we will use in this paper is the following result, which is \cite[Thm.~3.7]{Vishik:IsotropicMotives}.

\begin{theorem}\label{thmref:isotropicmotiviccohofpoint}
    Let $k$ be a flexible field. Then the isotropic motivic cohomology algebra of $\Spec(k)$ is given by the exterior algebra
    \[H_{iso}^{*,*'}(\Spec(k);\F_2) \simeq \Lambda_{\F_2}(r_0,r_1,\ldots)\]
    where the $r_i$ are the duals of the Milnor operations, concentrated in bidegree $(1-2^i)[1-2^{i+1}]$.
\end{theorem}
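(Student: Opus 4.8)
The plan is to reduce the computation to the structure of the Čech simplicial scheme $\mathcal{X}_{\mathbf{Q}}$ of the collection $\mathbf{Q}$ of all $2$-anisotropic varieties, using the idempotent presentation of the isotropic category. By \Cref{remark:smashingisotropic}, $\DM(k/k;\F_2)$ is the smashing localisation $\Upsilon_{k/k} \otimes \DM(k;\F_2)$, so $H^{p,q}_{iso}(\Spec(k);\F_2) = \Hom_{\DM(k;\F_2)}(T, \Upsilon_{k/k}(q)[p])$. Applying $\Hom_{\DM(k;\F_2)}(T,-(q)[p])$ to the defining triangle $\mathcal{X}_{\mathbf{Q}} \to T \to \Upsilon_{k/k} \to \mathcal{X}_{\mathbf{Q}}[1]$ produces a long exact sequence exhibiting $H^{p,q}_{iso}(\Spec(k);\F_2)$ as an extension of $\ker\!\big(\Hom_{\DM(k;\F_2)}(T,\mathcal{X}_{\mathbf{Q}}(q)[p+1]) \to H^{p+1,q}(\Spec(k);\F_2)\big)$ by $\coker\!\big(\Hom_{\DM(k;\F_2)}(T,\mathcal{X}_{\mathbf{Q}}(q)[p]) \to H^{p,q}(\Spec(k);\F_2)\big)$, the maps being induced by the augmentation $\mathcal{X}_{\mathbf{Q}} \to T$. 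Recalling that $H^{p,q}(\Spec(k);\F_2) = K^M_p(k)/2$ for $0 \le p \le q$ and vanishes otherwise (Beilinson--Lichtenbaum and Bloch--Kato), the whole problem is thus transported to understanding the twisted motivic homology $\Hom_{\DM(k;\F_2)}(T,\mathcal{X}_{\mathbf{Q}}(q)[p])$ and the comparison map above.

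I would first dispose of the ``classical'' range by showing that the comparison map is surjective in every bidegree other than $(0)[0]$, i.e. that every class of positive weight in the motivic cohomology of a flexible field is isotropically trivial. This is the standard mechanism underlying the proof of the Bloch--Kato conjecture: a nonzero pure symbol in $K^M_n(k)/2$ lifts to the reduced motivic cohomology of the Čech simplicial scheme of an associated norm variety --- a two-point scheme for $n=1$, an anisotropic conic for $n=2$, a Rost (norm) variety in general --- and such norm varieties are $2$-anisotropic in the paper's sense; for quadrics this is Springer's theorem, since an anisotropic quadric can have no closed point of odd degree. The remaining generators of $H^{*,*}(\Spec(k);\F_2)$ over the constants, namely the powers of $\tau \in H^{0,1}$ (and the class $\{-1\} \in H^{1,1}$), fall to the same circle of ideas. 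Since $\mathbf{Q}$ ranges over \emph{all} $2$-anisotropic varieties, $\mathcal{X}_{\mathbf{Q}}$ maps compatibly to each of these individual Čech schemes, and flexibility of $k$ guarantees there are enough anisotropic varieties to witness every class; this pins the cokernel term down to $\F_2$ in bidegree $(0)[0]$ and zero elsewhere.

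The main point is then to identify the kernel term and to check that the total bigraded ring is exactly $\Lambda_{\F_2}(r_0,r_1,\dots)$. The input here is the behaviour of the motivic cohomology of Čech simplicial schemes of norm varieties under the Milnor operations $Q_i$ (Voevodsky, Yagita): iterated application of $Q_0,\dots,Q_i$ to the fundamental class of a Rost motive yields a canonical tower of classes, and it is precisely the duals of these --- equivalently, the classes detected by the $Q_i$ --- that persist to the isotropic category and furnish the generators $r_i$. The prescribed bidegree $(1-2^i)[1-2^{i+1}]$ is forced by the bidegree $(2^i-1)[2^{i+1}-1]$ of $Q_i$, and the exterior relation $r_i^2 = 0$ reflects the collapse of the polynomial generators of the motivic dual Steenrod algebra to exterior ones once $\tau$ and $\{-1\}$ have been made isotropically trivial. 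Because the $2^i$ grow exponentially, each bidegree supports at most one monomial in the $r_i$, so an additive, bidegree-by-bidegree count --- in each bidegree only finitely many terms of the long exact sequence contribute, and all are now understood --- already determines the ring.

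The step I expect to be the real obstacle is the passage to the colimit over the large diagram of all $2$-anisotropic varieties, together with the verification that this colimit contributes \emph{nothing beyond} the exterior algebra --- that is, controlling $\Hom_{\DM(k;\F_2)}(T,\mathcal{X}_{\mathbf{Q}}(q)[p])$ precisely, not merely exhibiting the generators $r_i$ and the surjectivity above. This is exactly where flexibility is indispensable: it makes available Vishik's identification of isotropic with numerical Chow groups (\Cref{thmref:heartofgeometricisotropicmotives}) and the rigidity of the norm-variety picture that pins down the ``anisotropic part'' of motivic cohomology, so that the limiting argument becomes finite in each bidegree. The long exact sequence, the bidegree bookkeeping and the exterior relation are then formal; this completeness step is where the substantive content of \cite[Thm.~3.7]{Vishik:IsotropicMotives} resides.
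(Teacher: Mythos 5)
The paper does not prove this statement: it is quoted verbatim from \cite[Thm.~3.7]{Vishik:IsotropicMotives} and used as a black box, so there is no internal proof to compare your attempt against. Your sketch is therefore an attempted reconstruction of Vishik's original argument rather than of anything in the present paper.

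As a reconstruction, the broad outline is sound: the reduction $H^{p,q}_{iso}(\Spec(k);\F_2) \simeq \Hom_{\DM(k;\F_2)}(T,\Upsilon_{k/k}(q)[p])$ via the idempotent triangle, the resulting long exact sequence splitting the computation into a cokernel term (classes of the field killed isotropically) and a kernel term (new negative-degree classes carried by $\mathcal{X}_\mathbf{Q}$), the role of norm varieties and the Milnor operations $Q_i$ in producing the $r_i$, and the numerical observation that the bidegrees $(1-2^i)[1-2^{i+1}]$ are sparse enough that distinct monomials in the $r_i$ land in distinct bidegrees --- all of this tracks with what Vishik actually does. Your honest flag that the genuine work lies in controlling $\Hom_{\DM(k;\F_2)}(T,\mathcal{X}_\mathbf{Q}(q)[p])$ exactly, rather than merely exhibiting the $r_i$, is also well placed.

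The gap, however, is precisely at that flagged step, and it is not a minor one. You cannot compute $\Hom_{\DM}(T,\mathcal{X}_\mathbf{Q}(q)[p])$ directly from the definition of $\mathbf{Q}$: the essential input is a cofinality argument, available only over a flexible field, replacing the unwieldy Čech object of \emph{all} $2$-anisotropic varieties by the (co)limit over Čech objects of anisotropic Pfister quadrics $Q_{\{a_1,\dots,a_n\}}$ --- flexibility supplies enough generic Pfister forms that these dominate every anisotropic variety. Only after this reduction can one feed in the known computation of the motivic cohomology of $\mathcal{X}_{Q_\alpha}$ (Voevodsky, Orlov--Vishik--Voevodsky, Yagita) and its module structure over the Steenrod algebra, and then track what survives in the colimit. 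Your sketch invokes "rigidity of the norm-variety picture" and "\Cref{thmref:heartofgeometricisotropicmotives}" at this point, but neither actually performs that reduction, and the appeal to the numerical Chow heart is in any case circular in spirit, since it belongs to the same circle of results in \cite{Vishik:IsotropicMotives}. Likewise, the exterior relation $r_i^2=0$ does not follow formally from "$\tau$ and $\{-1\}$ becoming trivial"; it has to be read off from the actual Margolis homology computation on the Pfister Čech schemes. So the proposal identifies the right landmarks but leaves the substantive portion of Vishik's proof --- the Pfister cofinality argument and the explicit colimit of motivic cohomologies --- unaddressed.
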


Indeed, the preceding theorem is the main reason why it has proven beneficial to restrict ourselves to the category of isotropic (Artin--)Tate motives with $\F_2$-coefficients: this is the only situation where we know all the morphisms between all the generators of the category (at present).

\subsection{Weight structures}
A weight structure on a triangulated category permits one to filter objects by simpler ones, similar to $t$-structures. They were introduced by Bondarko in \cite{Bondarko:WeightStructures}, and we will use the following definition of a weight structure from \textit{loc. cit.}:
\begin{definition}
    Let $\mathcal{C}$ be a triangulated category. A \textbf{weight structure} on $\mathcal{C}$ is a pair of full subcategories $w=(\mathcal{C}^{\leq 0},\mathcal{C}^{\geq 0})$ satisfying
    \begin{itemize}
        \item[(1)] $\mathcal{C}^{\leq 0},\mathcal{C}^{\geq 0}$ are idempotent-complete in $\mathcal{C}$;
        \item[(2)] $\mathcal{C}^{\leq 0} \subseteq \mathcal{C}^{\leq 1}$ and $\mathcal{C}^{\geq 1} \subseteq \mathcal{C}^{\geq 0}$, where we define $\mathcal{C}^{\leq n} \coloneqq \mathcal{C}^{\leq 0}[-n], \mathcal{C}^{\geq n} \coloneqq \mathcal{C}^{\geq 0}[-n]$ for each $n \in \Z$;
        \item[(3)] $\mathcal{C}^{\geq 1} \perp \mathcal{C}^{\leq 0}$, i.e. for every $X \in \mathcal{C}^{\geq 1},Y \in \mathcal{C}^{\leq 0}$ $\Hom_\mathcal{C}(X,Y)=0$;
        \item[(4)] For every $X \in \mathcal{C}$ there exists a distinguished triangle
        \[A \rightarrow X \rightarrow B \rightarrow A[1]\]
        where $A \in \mathcal{C}^{\geq 1},B \in \mathcal{C}^{\leq 0}$.
    \end{itemize}
    We define the \textbf{heart} of the weight structure to be the full subcategory $\mathcal{C}^\heartsuit \coloneqq \mathcal{C}^{\leq 0} \cap \mathcal{C}^{\geq 0}$. We say a weight structure $w$ is \textbf{bounded} if every $X \in \mathcal{C}$ is in some $\mathcal{C}^{a \leq w \leq b} \coloneqq \mathcal{C}^{\geq a} \cap \mathcal{C}^{\leq b}$ for integers $a \leq b$.
\end{definition}

\begin{example}
    Let us outline the fundamental example of a triangulated category with a weight structure. Consider the triangulated category $\K(A)$ of chain complexes up to homotopy of an additive category $A$. Given a (co)chain complex $M^\bullet$, we can consider the \emph{`silly truncation'}
    \[\begin{tikzcd}
        w_{\leq n}:M^\bullet \arrow[r, mapsto] & w_{\leq n}M^\bullet
    \end{tikzcd}\]
    which sends $M^\bullet$ to the complex whose $m^{th}$ term is given by
    \[w_{\leq n}M^\bullet \coloneqq \begin{cases}
        M^m, \ m \leq n; \\
        0, \ m>n;
    \end{cases}\]
    and with the same differentials as $M^\bullet$ in those degrees in which we are permitted to write them down. Similarly, we can truncate from below:
    \[\begin{tikzcd}
        w_{\geq n}:M^\bullet \arrow[r, mapsto] & w_{\geq n}M^\bullet
    \end{tikzcd}\]
    which sends $M^\bullet$ to the complex given by
    \[w_{\geq n}M^\bullet \coloneqq \begin{cases}
        M^m, \ m \geq n; \\
        0, \ m<n;
    \end{cases}\]
    with the differentials once again agreeing with those of $M^\bullet$ where possible. We have evident morphisms of complexes
    \[\begin{tikzcd}
        w_{\geq n}M^\bullet \arrow[r] & M^\bullet \arrow[r] & w_{\leq n-1}M^\bullet
    \end{tikzcd}\]
    which moreover fit into a distinguished triangle in $\K(A)$:
    \[\begin{tikzcd}
        w_{\geq n}M^\bullet \arrow[r] & M^\bullet \arrow[r] & w_{\leq n-1}M^\bullet \arrow[r] & w_{\geq n}M^\bullet[1].
    \end{tikzcd}\]
    So, we define the full subcategories as follows:
    \[\K(A)^{\leq 0} \coloneqq \{M^\bullet \in \K(A) \mid M^\bullet \text{ is homotopic to some } N^\bullet \text{ with } N^m=0 \text{ for } m>0\};\]
    \[\K(A)^{\geq 0} \coloneqq \{M^\bullet \in \K(A) \mid M^\bullet \text{ is homotopic to some } N^\bullet \text{ with } N^m=0 \text{ for } m<0\}.\]
    That is, those complexes homotopic to ones concentrated in non-positive, respectively non-negative (cohomological) degrees. This defines a weight structure on $\K(A)$.
\end{example}
\begin{remark}
    Note the difference between the silly truncation and the \emph{canonical truncation} used to define the standard $t$-structure on the derived category $\D(\mathcal{A})$ of some abelian category $\mathcal{A}$. Indeed, by construction, the canonical truncation preserves \emph{cohomology} in a given range of degrees, rather than the \emph{complex itself}, whereas in general the silly truncation fails to preserve cohomology (if we now work in the setting where $A=\mathcal{A}$ to be abelian).
\end{remark}
\begin{remark}
    In general, unlike with $t$-structures, there is no canonical choice of weight decomposition of a given object $X \in \mathcal{K}$. This means we find ourselves in the predicament where we are forced to make a \emph{choice} of the weight filtration on our objects in any given situation. We also note that, whereas the heart of a $t$-structure is an abelian category, the heart of a weight structure is in general only additive and idempotent complete (such categories are sometimes called \emph{pseudo-abelian}).
\end{remark}

We can construct differentials between the graded pieces $X_n$ of some object $X$, and we would like to say that this construction is functorial. If we try this in general, the resulting functor---the so-called \emph{weak weight complex functor}---lands in a quotient $\K_\mathfrak{w}(\mathcal{K}^\heartsuit)$ of the homotopy category of chain complexes $\K(\mathcal{K}^\heartsuit)$:
\[\begin{tikzcd}
    t_\mathfrak{w}:\mathcal{K} \arrow[r] & \K_\mathfrak{w}(\mathcal{K}^\heartsuit).
\end{tikzcd}\]
Unfortunately, the latter category fails to be triangulated in general. We refer to \cite{Bondarko:WeightStructures} for the details. Despite these shortcomings, in cases where the category $\mathcal{K}$ has a suitable enhancement, the above functor $t_\mathfrak{w}$ can be upgraded to a \emph{`strong'} weight complex functor
\[\begin{tikzcd}
    t:\mathcal{K} \arrow[r] & \K(\mathcal{K}^\heartsuit)
\end{tikzcd}\]
which is moreover triangulated (we will use the $\infty$-categorical version of this vein of result---see \cite{Sosnilo:WeightStructures}).

Note that if we have a bounded weight structure $w$ on a triangulated category $\mathcal{C}$, then it permits us to express any given $X \in \mathcal{C}$ as a successive extension of finitely many (possibly zero) $X_i \in \mathcal{C}^\heartsuit[-i]$ of weight $i$ for $a \leq  i \leq b$ when $X \in \mathcal{C}^{a \leq w \leq b}$.
When we consider Voevodsky's triangulated category of geometric motives $\DM_{gm}(k)$, there is a bounded weight structure $w_{Chow}$ called the \textbf{Chow weight structure}: its heart consists of the category $\Chow(k)$ of Chow motives, that is, the motives of smooth projective varieties (and their summands), where morphisms are given by correspondences (i.e. algebraic cycles modulo rational equivalence on the product of the varieties in question). We refer to \cite{Bondarko:WeightStructures} for details on this weight structure.

\subsection{Tensor-triangular geometry}
Tensor-triangular geometry is concerned with studying triangulated categories $\mathcal{K}$ geometrically through their \textbf{Balmer spectra} $\Spc(\mathcal{K})$. It is defined as follows (see \cite{Balmer:tt-geometry} for details):

\begin{definition}
    Let $\mathcal{K}$ be a tensor-triangulated category. A \textbf{tt-ideal} is a full subcategory $\mathcal{I} \subseteq \mathcal{K}$ containing the zero object, which is moreover triangulated, thick, and closed under tensoring with arbitrary objects of $\mathcal{K}$.
    
    Explicitly, we require $0 \in \mathcal{I}$; if
    \[a \rightarrow b \rightarrow c \rightarrow a[1]\]
    is a distinguished triangle in $\mathcal{K}$, then if any two of $a,b,c$ are in $\mathcal{I}$, then so is the third; we require $\mathcal{I}$ to be closed under retracts ($a \oplus b \in \mathcal{I} \Rightarrow a,b \in \mathcal{I}$); if $x \in \mathcal{I}$ and $a \in \mathcal{K}$ then $a \otimes x \in \mathcal{I}$.

    We call a tt-ideal $\mathcal{I}$ \textbf{prime} if for all $x,y \in \mathcal{K}$, $x \otimes y \in \mathcal{I}$ implies $x \in \mathcal{I}$ or $y \in \mathcal{I}$.
\end{definition}

For a subset $\mathcal{S} \subseteq \mathcal{K}$, we write $\ideal{\mathcal{S}}$ for the tt-ideal generated by $\mathcal{S}$.

\begin{definition}
    Let $\mathcal{K}$ be a tensor-triangulated category. We define the \textbf{Balmer spectrum} $\Spc(\mathcal{K})$ as follows: as a set, it consists of the prime tt-ideals $\mathcal{P} \subseteq \mathcal{K}$. The topology, called the \textbf{Zariski topology}, is given by declaring the closed sets to be
    \[Z(S) \coloneqq \{\mathcal{P} \in \Spc(\mathcal{K}) \mid \mathcal{P} \cap S = \emptyset\}\]
    for all collections of objects $S \subseteq \mathcal{K}$. It has a basis of closed sets given by the \textbf{support} of objects $a \in \mathcal{K}$:
    \[\supp(a) \coloneqq \{\mathcal{P} \in \Spc(\mathcal{K}) \mid a \notin \mathcal{P}\}.\]
\end{definition}

One of the key motivating ideas behind studying the tt-geometry of triangulated categories is the fact that the topological space $\Spc(\mathcal{K})$ completely determines the set of \emph{radical thick tt-ideals} of $\mathcal{K}$.\footnote{If $\mathcal{K}$ is rigid, then these are precisely the tt-ideals of $\mathcal{K}$: any such is automatically radical.} It does so through the description of the \emph{Thomason subsets}: these are subsets $Y \subseteq \Spc(\mathcal{K})$ that can be written as a union $Y = \cup_{i \in I}U_i$ of closed subsets $U_i \subseteq \Spc(\mathcal{K})$ each of which has quasi-compact complement; in particular, there is a bijection between the set of Thomason subsets of $\Spc(\mathcal{K})$ and the set of radical thick tt-ideals of $\mathcal{K}$. As a corollary, knowledge of the space $\Spc(\mathcal{K})$ allows one to classify objects of $\mathcal{K}$ up to `tt-equivalence,' that is, whenever objects $a,b \in \mathcal{K}$ generate the same tt-ideal, i.e. when $\ideal{a}=\ideal{b}$.

\begin{remark}
    Akin to the Zariski spectra of rings, the Balmer spectrum is (contravariantly) functorial with respect to tt-functors between tt-categories: if $F:\mathcal{K} \rightarrow \mathcal{L}$ is such, then there is an induced map on the categories' spectra
    \[\Spc(F):\Spc(\mathcal{L}) \rightarrow \Spc(\mathcal{K}); \ \mathcal{P} \mapsto F^{-1}(\mathcal{P}).\]
\end{remark}

\begin{example}\label{ex:spectrumofring}(\cite[Thm.~6.3]{Balmer:tt-geometry}, \cite[Thm.~54]{Balmer:Survey})
    Let $R$ be a commutative ring. Then consider the bounded homotopy category of (co)chain complexes of finitely-generated projective $R$-modules: $\K^b(R-\textsf{proj})$. This is a tt-category in the obvious way, and we have a homeomorphism of spaces
    \[\Spc(\K^b(R-\textsf{proj})) \simeq \Spec(R)\]
    from its Balmer spectrum to the Zariski spectrum of $R$. The isomorphism is given by sending a prime $\mathfrak{p} \in \Spec(R)$ to the kernel of localisation at said prime:
    \[\begin{tikzcd}
        \mathcal{P}(\mathfrak{p}) \coloneqq \ker(-\otimes_R R_\mathfrak{p}:\K^b(R-\textsf{proj}) \arrow[r] & \K^b(R_\mathfrak{p}-\textsf{proj})).
    \end{tikzcd}\]
    In particular, we see that two objects $M,N \in \K^b(R-\textsf{proj})$ generate the same tt-ideal iff $\supp(M)=\supp(N)$ iff $M$ and $N$ vanish under the same localisations at primes $\mathfrak{p} \in \Spec(R)$.
    
    The above is a specialisation of the case where $X$ is a quasi-compact quasi-separated scheme: the Balmer spectrum of the derived category of perfect complexes over such $X$ enjoys a homeomorphism $\Spc(\D^{perf}(X)) \simeq X$. Moreover, the Balmer spectrum can be upgraded to a locally ringed space, and then the above homeomorphism can be promoted to an isomorphism respecting this structure: we recover the \emph{scheme} $X$ from its tt-category $\D^{perf}(X)$. 
\end{example}
\begin{remark}
    The Balmer spectrum of a tt-category is not just a topological space, but a \textit{spectral space} (see for instance \cite{Balmer:Spectra}). These can be characterised as those topological spaces homeomorphic to the Zariski spectrum of some ring. So, \Cref{ex:spectrumofring} informs us that a topological space is the Balmer spectrum of some tt-category precisely when it is the Zariski spectrum of some commutative ring $R$.
\end{remark}
\begin{remark}
    A particular instance of the preceding example tells us the Balmer spectrum of the (compact part of the) topological category of motives is the spectrum of the integers:
    \[\Spc(\D(\Z)^c) \simeq \Spec(\Z)\]
    whilst the $\F_p$-coefficients version of the category gives rise to a singleton spectrum.
\end{remark}

As well as the aforementioned reasons for wanting to understand the Balmer spectra of categories, one particular motivation for studying the tt-geometry of $\DM_{gm}(k;R)$ is to understand conservative realisations. Indeed, a conservative family of functors $F_i:\DM_{gm}(k;R) \rightarrow \mathcal{K}_i$ is more-or-less equivalent to surjectivity of the induced map on spectra on closed points (see \cite{Balmer:Conservative} and \cite{Barthel:Surjectivity}), and one of the large goals in the theory of motives is to produce conservative families of functors to easier, more manageable categories (and there are big conjectures relating to such problems).

On top of the Balmer spectrum, there is another topological space one associates to tt-categories, called the \textbf{homological spectrum}, denoted $\Spc^h(\mathcal{K})$. We refer the reader to \cite{Balmer:ResidueFields} and the references therein for the intricacies of the homological spectra of tt-categories; nevertheless we recall the relevant basic details here. When considering the homological spectrum, we often assume $\mathcal{K}$ is the `small' part of a `big' tt-category $\mathcal{T}$: the latter should admit arbitrary coproducts and be rigidly-compactly generated by $\mathcal{K}$ (although this assumption isn't necessary to define the homological spectrum). We consider the module category $\textsf{Mod}-\mathcal{K}$, defined to be the category of additive contravariant functors from $\mathcal{K}$ to the category of abelian groups $\textsf{Ab}$:
\[\textsf{Mod}-\mathcal{K} \coloneqq \operatorname{Fun}^{\oplus}(\mathcal{K}^{op},\textsf{Ab}).\]
We then consider the subcategory $\textsf{mod}-\mathcal{K}$ of finitely-presented objects; both are abelian categories. We can consider the restricted Yoneda functor
\[h:\mathcal{T} \rightarrow \textsf{Mod}-\mathcal{K}\]
which sends an object $x \in \mathcal{T}$ to the restriction $\Hom_\mathcal{T}(-,x)\vert_{\mathcal{K}}$ of the functor represented by $x$ to the subcategory $\mathcal{K}$. The restriction of $h$ to $\mathcal{K}$ factors through $\textsf{mod}-\mathcal{K}$, and this is the universal homological functor out of $\mathcal{K}$. It is common to denote $h$ by $\widehat{(-)}$ (on both objects and morphisms). We call a full subcategory $\mathcal{B} \subseteq \mathcal{A}$ of an abelian category $\mathcal{A}$ a \emph{Serre subcategory} if for every short exact sequence
\[0 \rightarrow x \rightarrow y \rightarrow z \rightarrow 0\]
in $\mathcal{A}$, we have that $y \in \mathcal{B}$ iff both $x,z \in \mathcal{B}$. In the symmetric monoidal setting, we call $\mathcal{B}$ a Serre $\otimes$-ideal if it is Serre and moreover closed under taking tensor products with arbitrary objects of $\mathcal{A}$. The category $\textsf{Mod}-\mathcal{K}$ can be made symmetric monoidal: the tensor product is given by Day convolution---this is the universal symmetric monoidal structure on $\textsf{Mod}-\mathcal{K}$ which is colimit-preserving in each variable whilst making $h\vert_\mathcal{K}$ monoidal. Moreover, the image of each $x \in \mathcal{T}$ under $h$ is $\otimes$-flat in $\textsf{Mod}-\mathcal{K}$ (see \cite{Balmer:SmashingFrame}).

\begin{definition}
    Keeping the above notation, we define the \textbf{homological spectrum} $\Spc^h(\mathcal{K})$ of $\mathcal{K}$ to be the following space. As a set, it consists of the maximal Serre $\otimes$-ideals $\mathcal{B} \subseteq \textsf{mod}-\mathcal{K}$. The topology is the one generated by the closed subsets of the form
    \[\supp^h(a) \coloneqq \{\mathcal{B} \in \Spc^h(\mathcal{K}) \mid \widehat{a} \notin \mathcal{B}\}\]
    for $a \in \mathcal{K}$.
\end{definition}

The quotient categories $\textsf{Mod}-\mathcal{K}/\langle \mathcal{B} \rangle$ are referred to as \textbf{homological residue fields} (where, here, $\ideal{\mathcal{B}}$ denotes the localising subcategory of $\textsf{Mod}-\mathcal{K}$ generated by $\mathcal{B}$). There is a canonical map from the homological spectrum of $\mathcal{K}$ to its Balmer spectrum, since the former defines a support data for $\mathcal{K}$. A key result of Balmer \cite{Balmer:ResidueFields} is the following

\begin{theorem}
    Assume $\mathcal{K}$ is rigid. Then the comparison map
    \[\phi:\Spc^h(\mathcal{K}) \longrightarrow \Spc(\mathcal{K}); \ \mathcal{B} \mapsto h^{-1}(\mathcal{B})\]
    is a surjection.
\end{theorem}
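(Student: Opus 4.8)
The plan is to package the homological support into a support datum on $\mathcal{K}$, invoke Balmer's universal property of $\Spc$ to obtain $\phi$, and then prove surjectivity by a localisation followed by a Zorn's lemma argument. Concretely, I would first verify that $a\mapsto\supp^h(a)$ is a support datum on $\mathcal{K}$ valued in the spectral space $\Spc^h(\mathcal{K})$; Balmer's reconstruction then produces a unique continuous $\phi$ with $\phi^{-1}(\supp(a))=\supp^h(a)$, and unwinding the definitions identifies $\phi(\mathcal{B})$ with $h^{-1}(\mathcal{B})=\{a\in\mathcal{K}:\widehat a\in\mathcal{B}\}$. The support-datum axioms all follow once one knows that, for every maximal Serre $\otimes$-ideal $\mathcal{B}\subseteq\textsf{mod}-\mathcal{K}$, the subcategory $h^{-1}(\mathcal{B})$ is a prime tt-ideal of $\mathcal{K}$. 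That it is a $\otimes$-ideal is formal from $\widehat{(-)}$ being homological and monoidal; that it is shift-invariant — hence a thick triangulated $\otimes$-ideal, which yields the cone and sum axioms — holds because $\widehat{\unit[1]}$ is $\otimes$-invertible in $\textsf{mod}-\mathcal{K}$ (with inverse $\widehat{\unit[-1]}$), so $\widehat{a[1]}\simeq\widehat{\unit[1]}\otimes\widehat a$ and a $\otimes$-ideal contains $\widehat a$ exactly when it contains $\widehat{a[1]}$. The one nonformal input is primeness, where I would use that each $\widehat a$ is $\otimes$-flat: if $a\otimes b\in h^{-1}(\mathcal{B})$ but $a,b\notin h^{-1}(\mathcal{B})$, then by maximality the Serre $\otimes$-ideals generated by $\mathcal{B}\cup\{\widehat a\}$ and by $\mathcal{B}\cup\{\widehat b\}$ are both all of $\textsf{mod}-\mathcal{K}$, so $\widehat\unit$ is a subquotient of a module built from $\mathcal{B}$ and $\widehat a$, and also of one built from $\mathcal{B}$ and $\widehat b$; tensoring these two presentations and tracking monomorphisms and epimorphisms via flatness shows that $\widehat{a\otimes b}=\widehat a\otimes\widehat b\in\mathcal{B}$ forces $\widehat\unit\in\mathcal{B}$, contradicting properness of $\mathcal{B}$.

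For surjectivity, fix $\mathcal{P}\in\Spc(\mathcal{K})$ and pass to the idempotent completion $\bar{\mathcal{K}}$ of the Verdier quotient $\mathcal{K}/\mathcal{P}$, with its tt-quotient functor $q\colon\mathcal{K}\to\bar{\mathcal{K}}$. Then $\bar{\mathcal{K}}$ is again rigid and nonzero, the zero ideal $(0)$ is prime in $\bar{\mathcal{K}}$ (because $\mathcal{P}$ is prime), $\Spc(q)$ embeds $\Spc(\bar{\mathcal{K}})$ onto $\{\mathcal{Q}\in\Spc(\mathcal{K}):\mathcal{Q}\supseteq\mathcal{P}\}$ and sends $(0)$ to $\mathcal{P}$, and the comparison map is natural in tt-functors, so the square relating $\phi_{\mathcal{K}},\phi_{\bar{\mathcal{K}}},\Spc^h(q),\Spc(q)$ commutes. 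Thus it suffices to produce a maximal Serre $\otimes$-ideal $\mathcal{B}\subseteq\textsf{mod}-\bar{\mathcal{K}}$ with $h^{-1}(\mathcal{B})=(0)$, i.e. with $\widehat a\notin\mathcal{B}$ for every nonzero $a\in\bar{\mathcal{K}}$.

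To build such a $\mathcal{B}$, consider $S:=\{\widehat a : 0\neq a\in\bar{\mathcal{K}}\}\subseteq\textsf{mod}-\bar{\mathcal{K}}$. Since $\widehat a=0$ iff $a=0$, since $\widehat{a\otimes b}\simeq\widehat a\otimes\widehat b$, and since $(0)$ is prime in $\bar{\mathcal{K}}$, the class $S$ contains $\widehat\unit$, is closed under tensor products, and does not contain $0$. Applying Zorn's lemma to the poset of Serre $\otimes$-ideals of $\textsf{mod}-\bar{\mathcal{K}}$ disjoint from $S$ (nonempty, since it contains the zero subcategory, and closed under unions of chains) produces a Serre $\otimes$-ideal $\mathcal{B}$ maximal with respect to $\mathcal{B}\cap S=\emptyset$; by construction $h^{-1}(\mathcal{B})=(0)$. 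The same flatness manipulation as above shows $\mathcal{B}$ is prime: if $x\otimes y\in\mathcal{B}$ with $x,y\notin\mathcal{B}$, the Serre $\otimes$-ideals generated by $\mathcal{B}\cup\{x\}$ and by $\mathcal{B}\cup\{y\}$ must meet $S$, say in $\widehat a$ and $\widehat b$ with $a,b\neq0$, whence $\widehat{a\otimes b}=\widehat a\otimes\widehat b$ lies in the Serre $\otimes$-ideal generated by $\mathcal{B}\cup\{x\otimes y\}$, which is just $\mathcal{B}$; but $a\otimes b\neq0$, contradicting $\mathcal{B}\cap S=\emptyset$.

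What remains — and what I expect to be the main obstacle, being the place where rigidity of $\bar{\mathcal{K}}$ is indispensable — is to upgrade this prime, $S$-avoiding Serre $\otimes$-ideal $\mathcal{B}$ to a genuinely maximal Serre $\otimes$-ideal, so that it is a point of $\Spc^h(\bar{\mathcal{K}})$. One must show that any proper Serre $\otimes$-ideal $\mathcal{B}'\supsetneq\mathcal{B}$ contains $\widehat\unit$, which is absurd; by maximality of $\mathcal{B}$ among $S$-avoiders such a $\mathcal{B}'$ contains $\widehat a$ for some $a\neq0$, so it is enough to prove that in $\textsf{mod}-\bar{\mathcal{K}}$, with $\bar{\mathcal{K}}$ rigid, a prime Serre $\otimes$-ideal is maximal — or, less ambitiously, that membership of a single $\widehat a$ with $a\neq0$, together with primeness of $\mathcal{B}$, forces $\widehat\unit$ into $\mathcal{B}'$. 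The natural tool is duality in $\bar{\mathcal{K}}$: every finitely presented $\bar{\mathcal{K}}$-module is a quotient of $\widehat c$ for some dualizable $c$, and the dual $\widehat{a^{\vee}}$ of $\widehat a$ together with its coevaluation $\widehat\unit\to\widehat a\otimes\widehat{a^{\vee}}$ and evaluation $\widehat{a^{\vee}}\otimes\widehat a\to\widehat\unit$ should let one realise $\widehat\unit$ (or a twist of it) as a subquotient of a module contained in any $\otimes$-ideal containing $\widehat a$; combining this with primeness of $\mathcal{B}$ then closes the argument. Making this duality/Nakayama-type step precise — in particular guaranteeing that it delivers $\widehat\unit$ itself rather than merely a nonzero quotient of it — is the delicate point where I would concentrate the effort.
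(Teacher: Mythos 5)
The paper does not include its own proof of this statement; it is cited directly from Balmer \cite{Balmer:ResidueFields}, so there is no in-paper argument to compare against. That said, your outline does track the general shape of Balmer's original proof: well-definedness of $\phi$ via flatness of representables, reduction to $\bar{\mathcal{K}}=(\mathcal{K}/\mathcal{P})^\natural$ where $(0)$ is prime, a Zorn's lemma argument producing a Serre $\otimes$-ideal $\mathcal{B}$ maximal with respect to avoiding $S=\{\widehat a:0\neq a\in\bar{\mathcal{K}}\}$, and verifying $\mathcal{B}$ is prime. All of that is sound in outline.

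The acknowledged gap at the end is genuine, but the route you propose to close it --- that every prime Serre $\otimes$-ideal of $\textsf{mod}$-$\bar{\mathcal{K}}$ is maximal --- is stronger than what is needed and I do not believe it is true in general. What actually works uses both primeness \emph{and} $S$-avoidance of $\mathcal{B}$. Suppose $\mathcal{B}'\supsetneq\mathcal{B}$ is a proper Serre $\otimes$-ideal; by maximality of $\mathcal{B}$ among $S$-avoiders, $\widehat a\in\mathcal{B}'$ for some $a\neq 0$, hence also $\widehat{a\otimes a^\vee}\in\mathcal{B}'$. Consider the coevaluation $\eta\colon\unit\to a\otimes a^\vee$ and the subobject $K\coloneqq\ker(\widehat\eta)\subseteq\widehat\unit$. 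Since $\widehat a$ is flat, tensoring $K\hookrightarrow\widehat\unit$ with $\widehat a$ gives a monomorphism $K\otimes\widehat a\hookrightarrow\widehat a$ whose composite with $\widehat{\eta\otimes\id_a}\colon\widehat a\to\widehat{a\otimes a^\vee\otimes a}$ vanishes; but $\eta\otimes\id_a$ is split monic in $\bar{\mathcal{K}}$ by the triangle identities of rigidity, so $\widehat{\eta\otimes\id_a}$ is monic and therefore $K\otimes\widehat a=0\in\mathcal{B}$. Primeness of $\mathcal{B}$ together with $\widehat a\notin\mathcal{B}$ then forces $K\in\mathcal{B}\subseteq\mathcal{B}'$. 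Finally, $\widehat\unit/K\cong\operatorname{im}(\widehat\eta)$ is a subobject of $\widehat{a\otimes a^\vee}\in\mathcal{B}'$, so also lies in $\mathcal{B}'$, and Serre closure under extensions gives $\widehat\unit\in\mathcal{B}'$, contradicting properness. This is the precise form of the ``Nakayama-type'' step you anticipated; the insight you were missing is that one should detect the \emph{kernel} of $\widehat\eta$, rather than $\widehat\unit$ itself, under $-\otimes\widehat a$, and then let primeness of $\mathcal{B}$ finish the job.
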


Consequently, it is clear that the complexity of the homological spectrum bounds from above the complexity of the usual spectrum of a rigid tt-category. Moreover, in \textit{loc. cit.} Balmer proposes what has become to be known as the \emph{Nerves of Steel} conjecture:

\begin{conj}
    The above comparison map is always a bijection.
\end{conj}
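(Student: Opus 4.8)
\smallskip
\noindent\emph{A strategy for the conjecture.} Since the surjectivity of $\phi$ for rigid $\mathcal{K}$ is Balmer's theorem recalled above, the plan is to prove injectivity: that over each $\mathcal{P}\in\Spc(\mathcal{K})$ there lies exactly one maximal Serre $\otimes$-ideal of $\textsf{mod}-\mathcal{K}$. The first step is the routine bookkeeping that $\Spc$, $\Spc^h$ and the map $\phi$ are compatible with localisation and with idempotent-completed Verdier quotients --- on the homological side because $\textsf{mod}$ of such a localisation is again a localisation of module categories and $\supp^{h}$ transports along the induced map of homological spectra. Iterating this, one reduces to the \emph{tt-local, reduced} case: it suffices to show that whenever $\mathcal{K}$ is an essentially small rigid tt-category with $\Spc(\mathcal{K})$ a single point --- so its unique prime is $(0)$ --- the abelian category $\textsf{mod}-\mathcal{K}$ has a unique maximal Serre $\otimes$-ideal.

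In this local, reduced case I would study a maximal Serre $\otimes$-ideal $\mathcal{B}\subseteq\textsf{mod}-\mathcal{K}$ through its homological residue field functor $\bar H_\mathcal{B}\colon\mathcal{K}\xrightarrow{\ \widehat{(-)}\ }\textsf{mod}-\mathcal{K}\twoheadrightarrow\textsf{mod}-\mathcal{K}/\mathcal{B}=:\bar{\mathcal{A}}_\mathcal{B}$, whose target is a tensor-abelian category with no proper nonzero Serre $\otimes$-ideal, which is homological and monoidal, and from which $\mathcal{B}=\{X:\bar H_\mathcal{B}(X)=0\}$ is recovered. Two ingredients drive the argument. Balmer's homological nilpotence theorem --- valid for all rigid $\mathcal{K}$ --- says a morphism $f$ of $\mathcal{K}$ is $\otimes$-nilpotent iff $\bar H_\mathcal{B}(\widehat f)=0$ for every $\mathcal{B}$; and in the local reduced case every nonzero object generates $\mathcal{K}$ as a tt-ideal, so this makes the family $\{\bar H_\mathcal{B}\}$ jointly detect $\otimes$-nilpotence. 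What then remains is to upgrade joint detection to \emph{equality} of the kernels $\mathcal{B}$ on $\textsf{mod}-\mathcal{K}$. The only mechanism I know for this is the constructive one: exhibit a single canonical ``residue'' --- a tt-functor $r\colon\mathcal{K}\to\mathcal{F}$ to a tt-field (an abelian, or triangulated, tensor category whose unit generates and whose graded self-maps of the unit form a graded field, so that $\widehat{(-)}$ is conservative) --- and prove that $\ker(\widehat r)^{fp}\subseteq\textsf{mod}-\mathcal{K}$ is the \emph{only} maximal Serre $\otimes$-ideal. This is exactly the shape of the argument carried out in the present paper (\Cref{cor:NoSholds}): the weight complex functor $t$ plays the role of $r$, and the strong weight-nilpotence of the Tate (resp. Artin--Tate) generators forces every maximal Serre $\otimes$-ideal of $\textsf{mod}-\DTM_{gm}(k/k;\F_2)$ (resp. of $\textsf{mod}-\DATM_{gm}(k/k;\F_2)$, $\textsf{mod}-\DAM_{gm}(k/k;\F_2)$) to equal $\ker(\widehat t)^{fp}$.

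The main obstacle is thus this last step in the tt-local, reduced case, and it is, to my knowledge, genuinely open: it is not known whether every such $\mathcal{K}$ admits a nonzero tt-functor to a tt-field --- the ``tt-field existence problem'' --- nor, in the absence of one, any other mechanism pinning down its maximal Serre $\otimes$-ideals. A positive answer would, by Balmer's machinery, yield the \emph{Nerves of Steel} conjecture in general. Tellingly, in every case where the conjecture is presently known --- perfect complexes over a qcqs scheme, finite spectra, stable and derived module categories of finite groups, genuine $G$-equivariant finite spectra, and the isotropic (Artin--)Tate categories treated here --- the proof proceeds precisely by producing such detecting functors by hand (residue fields of points, Morava $K$-theories, Brauer characters, geometric fixed points, the weight complex functor). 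I would therefore not expect to settle the conjecture in full generality by this route; the realistic deliverable is the reduction above, together with the verification in the structured situation of this paper, where the strong weight-nilpotence established herein supplies exactly the required detecting functor.
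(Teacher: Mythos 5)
You correctly identify that this statement is an open conjecture --- the paper labels it \emph{Nerves of Steel} and does not prove it; what the paper \emph{does} prove is that it holds in the isotropic categories at hand (\Cref{cor:NoSholds}), by exhibiting the weight complex functor as a $\otimes$-nilpotence-detecting tt-functor to a tt-field. Your sketch of the reduction to the tt-local case, your appeal to Balmer's homological nilpotence theorem, and your diagnosis of the missing ingredient (a canonical detecting residue field in the general local, reduced setting) accurately reflect the state of the art, and you rightly stop short of claiming a proof. There is no gap to flag beyond the honest acknowledgement, which you make, that the conjecture remains open in general.
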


\subsubsection*{Acknowledgements}
I am indebted to my supervisor Alexander Vishik for his continuous support and invaluable discussions, including comments on earlier drafts of this document, that helped to make this paper possible. I would also like to thank Jean Paul Schemeil for his many stimulating questions. I am grateful to Martin Gallauer for useful conversations, in particular for pointing out the proof of the symmetric monoidal statement within \Cref{prop:existenceoft} as well as providing me with a proof of \Cref{lemma:homotopycolimits}.
\section{Balmer spectra of tt-categories possessing strongly weight-nilpotent generators}
Throughout this section, when considering isotropic motives, $p$ will denote any prime, and our field $k$ will always be flexible and of characteristic $0$ (\Cref{hyp:flexiblechar0}), unless stated otherwise. Let $\mathcal{K}$ be a rigid tt-category, and assume $\mathcal{K}$ is equipped with a compatible bounded weight structure $w$. Suppose $\mathcal{K}$ is generated by a set of objects $\mathcal{S}=\{x_i\}_{i \in I}$ each of which lie in a single weight (that is, for each $i \in \Z$, $x_i \in \mathcal{K}^\heartsuit[-n_i]$ lies in weight $n_i$ for some $n_i \in \Z$), and suppose every $x \in \mathcal{K}^\heartsuit[-n]$ is a coproduct of (possibly various) $x_j$ for which $n_j=n$. Suppose further that we have $\ideal{x_i}=\mathcal{K}$ for every $i \in I$.

\begin{definition}\label{def:strongnilpotency}
    With the notation as above, we shall call a (finite) subset $\{x_j\}_{j \in J} \subseteq \mathcal{S}$ \textbf{strongly weight-nilpotent} if there is some $n$ for which
    \[\Hom(x_{\psi(1)} \otimes \cdots \otimes x_{\psi(n)},\unit)=0\]
    for all functions $\psi:\{1,\ldots,n\} \rightarrow J$.
\end{definition}
\begin{theorem}\label{theorem:nilpotentspectrum}
    Let $\mathcal{K}$ be as above. Suppose for every $a \in \mathcal{K}$, the weight complex of $a$ can be chosen to have zero differentials, and that every object of $\mathcal{K}^{<0}$ has a weight decomposition consisting of a strongly weight-nilpotent set of the generators (this will happen if for instance every (finite) subset $\{x_j\}_{j \in J} \subseteq \mathcal{S} \cap \mathcal{K}^{<0}$ is strongly-weight nilpotent).
    Then the Balmer spectrum $\Spc(\mathcal{K})$ is the singleton $\{(0)\}$.
\end{theorem}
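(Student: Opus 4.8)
The plan is to show that the zero ideal is prime, and then that it is the unique prime, by exploiting the strong weight-nilpotency hypothesis to prove a nilpotence/vanishing statement for morphisms between generators. First I would reduce the problem to a statement about the generators: since $\mathcal{K}$ is rigid, every object is dualizable, and by \cite{Balmer:tt-geometry} a point of $\Spc(\mathcal{K})$ is determined by which objects it contains; because $\langle x_i\rangle = \mathcal{K}$ for every $i$, no $x_i$ lies in any proper tt-ideal, so in particular no $x_i$ lies in any prime. Hence to show $\Spc(\mathcal{K}) = \{(0)\}$ it suffices to show that $(0)$ is the only prime tt-ideal; equivalently, that $(0)$ is prime and that every nonzero object generates the whole category (so there are no nontrivial radical tt-ideals, forcing $\Spc(\mathcal{K})$ to be a single point). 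The latter is the heart of the matter, and it is where the weight structure enters.

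Next I would set up the key reduction using the weight complex functor. Given a nonzero $a \in \mathcal{K}$, boundedness of $w$ lets me write $a$ (after shifting, which does not change the tt-ideal it generates) as a successive extension of finitely many $a_i \in \mathcal{K}^\heartsuit[-i]$, and by hypothesis each $a_i$ is a coproduct of generators $x_j$ with $n_j = i$; moreover the weight complex of $a$ can be chosen to have zero differentials, so the distinguished triangles assembling $a$ from its weight-graded pieces are \emph{split} in the appropriate sense. Concretely, I expect that zero differentials in the weight complex force $a$ to be a direct sum $\bigoplus_i a_i[-i]$ of its weight-graded pieces (up to the chosen decomposition), since the connecting maps in the tower of weight truncations are precisely the differentials of the weight complex. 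Consequently $\langle a \rangle = \langle a_i : a_i \neq 0 \rangle$, and since each $a_i$ is a coproduct — hence, being compact/rigid, a finite coproduct after splitting off a summand — of generators $x_j$, we get that $\langle a \rangle$ contains some single generator $x_j$, whence $\langle a \rangle = \langle x_j \rangle = \mathcal{K}$. This already shows $\mathcal{K}$ has no proper nonzero radical tt-ideals, so $\Spc(\mathcal{K})$ is at most one point.

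Finally I would check $(0)$ is genuinely prime — i.e. that $\mathcal{K}$ is nonzero and $\mathbb{1} \neq 0$, which holds since $\langle x_i \rangle = \mathcal{K} \ni \mathbb{1}$ and the $x_i$ are nonzero — and, more substantively, that $x \otimes y = 0$ implies $x = 0$ or $y = 0$. For this I would use rigidity together with the strong weight-nilpotency: if $x, y$ are both nonzero then by the previous paragraph $\langle x \rangle = \langle y \rangle = \mathcal{K}$, so $\mathbb{1} \in \langle x \otimes y \rangle$ would force $\langle x \otimes y\rangle = \mathcal{K}$ and in particular $x \otimes y \neq 0$. Actually the cleanest route is: once we know every nonzero object generates $\mathcal{K}$, the zero ideal is automatically prime (a proper tt-ideal that is not prime would have to sit inside some prime, but the only proper tt-ideal is $(0)$ itself, and $(0)$ fails primality only if there exist nonzero $x,y$ with $x\otimes y = 0$ — but then $\langle x\otimes y\rangle = (0) \neq \mathcal{K}$ contradicts $x \neq 0 \Rightarrow \langle x\rangle = \mathcal{K}$ once we observe $\langle x\otimes y\rangle \supseteq \langle x\rangle \otimes \langle y\rangle$... hmm). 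The subtle point — and the step I expect to be the main obstacle — is precisely establishing that the weight complex having zero differentials really does split $a$ as the sum of its weight pieces \emph{functorially enough} to conclude $\langle a\rangle = \langle x_j\rangle$; this needs the strong nilpotency input of \Cref{def:strongnilpotency} to control the relevant $\Hom$-groups (not merely individual maps) so that the extensions genuinely split, and to rule out exotic phantom-like behaviour. The hypothesis that $\mathcal{K}^{<0}$-pieces come from strongly weight-nilpotent sets of generators is exactly what is needed to make the connecting maps vanish after passing to a high enough tensor power, and then an inductive argument on weight-length, together with the observation that $\langle x_j \rangle = \mathcal{K}$ for a single generator, closes the loop.
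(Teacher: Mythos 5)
Your proposal contains a genuine gap at its central step, and the gap is exactly the one you flag as "the main obstacle": the claim that if the weight complex of $a$ can be chosen with zero differentials, then $a$ splits as a direct sum $\bigoplus_i a_i[-i]$ of its weight-graded pieces. This is false. The differentials of the weight complex are morphisms $a_n \to a_{n-1}$ \emph{inside the heart}, obtained by composing two consecutive connecting maps in the tower of weight truncations; they are not the connecting morphisms $f\colon a_{\leq m-1}\to a_m[1]$ themselves. Vanishing of the former does not force vanishing of the latter, and in general the weight filtration tower does not split even when the weight complex has zero differentials. (Indeed, the entire point of the nilpotency hypothesis is that $f$ need \emph{not} be zero, only $\otimes$-nilpotent; if zero differentials alone forced a splitting, the strong weight-nilpotency hypothesis would be superfluous and the theorem would collapse to a triviality.) Consequently your step "hence $\langle a\rangle$ contains some single generator $x_j$" is unjustified, and the argument does not close.

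What the paper actually does with the zero-differentials hypothesis is much weaker but correct: the differential $a_{m-1}\to a_m[1]$ being zero lets one factor $f\colon a_{\leq m-1}\to a_m[1]$ through the lower truncation $a_{\leq m-2}$, which after shifting and dualizing reduces to showing a map $b\to \unit$ with $b\in\mathcal{K}^{<0}$ is $\otimes$-nilpotent. Strong weight-nilpotency then kills $\Hom(b^{\otimes n},\unit)$ for $n\gg 0$. The missing idea in your write-up is how $\otimes$-nilpotence of $f$ finishes the job: one does not conclude that $a$ splits, but rather passes to the Verdier quotient $\mathcal{K}/\langle a\rangle$, where $a=\cone(f)[-1]$ is annihilated and hence $f$ becomes an isomorphism; a morphism that is simultaneously an isomorphism and $\otimes$-nilpotent forces its target to vanish, so $a_m$ dies in the quotient, and since $\langle a_m\rangle=\mathcal{K}$ the quotient is trivial, i.e.\ $\langle a\rangle=\mathcal{K}$. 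Your final paragraph gestures at "make the connecting maps vanish after passing to a high enough tensor power" but still attaches this to a purported splitting of the extensions, which is the wrong conclusion to draw from $f^{\otimes n}=0$. Without the "nilpotent isomorphism in the quotient" observation the argument does not go through.
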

\begin{proof}
    We will show that for any nonzero object $a \in \mathcal{K}$, the tt-ideal generated by $a$ is the entire category. This will prove the spectrum is a singleton (the only prime will be $(0)$).

    The existence of (bounded) weight decompositions permits us to express $a$ as follows:
    \[\begin{tikzcd}
        a_m \arrow[r] & a \arrow[r] & a_{\leq m-1} \arrow[r, "f"] & a_m[1]
    \end{tikzcd}\]
    where $m$ is the `highest weight' of $a$ (so that $a \in \mathcal{K}^{\leq m}$ and $a_m \in \mathcal{K}^\heartsuit[-m]$). Note that $\ideal{a}$ coinciding with $\mathcal{K}$ is equivalent to proving the quotient category $\mathcal{K}/\ideal{a}$ is trivial. In other words, we need to show that forcing the morphism $f$ to become an isomorphism kills our category. Thus, it is enough to show that $f$ is $\otimes$-nilpotent (or \emph{smash nilpotent}). For, in this case, we have that (the image of) $f$ is both $\otimes$-nilpotent and an isomorphism in $\mathcal{K}/\ideal{a}$, which forces the (image of the) target $a_m[1]$ of $f$ to be $0$ in $\mathcal{K}/\ideal{a}$, which in turn forces the quotient category to be $0$. This last fact follows by noting $\ideal{a_m}=\mathcal{K}$ as $a_m$ is the coproduct of some $x_i$, and so if we annihilate $a_m$ in $\mathcal{K}/\ideal{a}$ we annihilate the $x_i$ and hence the entire category. Explicitly, we would have, for some $n$, the morphism in $\mathcal{K}/\ideal{a}$
    \[0=f^{\otimes n}:(a_{\leq m-1})^{\otimes n} \rightarrow a_m^{\otimes n}[n]\]
    so that $a_m^{\otimes n}=0 \in \mathcal{K}/\ideal{a}$ (since $a=\cone(f)[-1]$, annihilating $a$ forces $f$, and so $f^{\otimes n}$, to become an isomorphism in this quotient). Then we have (looking in the quotient category $\mathcal{K}/\ideal{a}$)
    \[\mathcal{K}/\ideal{a}=\ideal{a_m}=\ideal{a_m^{\otimes n}}=0,\]
    so we conclude $\ideal{a}=\mathcal{K}$.
    
    By assumption, we can choose our weight filtration on $a$ to be such that the differentials $d$ between the weight graded pieces are zero. Thus, via considering the following diagram (more precisely, by considering the associated long exact sequence)
    \[\begin{tikzcd}
        a_{m-1} \arrow[r] \arrow[rd, swap, "d=0"] & a_{\leq m-1} \arrow[r] \arrow[d, "f"] & a_{\leq m-2} \arrow[r] \arrow[dl, dashrightarrow] &  a_{m+1}[1] \\
        & a_m[1]
    \end{tikzcd}\]
    we obtain that $f$ factors through $a_{\leq m-2}$. In particular it suffices to prove that any such morphism
    \[\begin{tikzcd}
        a_{\leq m-2} \arrow[r] & a_m[1]
    \end{tikzcd}\]
    is $\otimes$-nilpotent. Equivalently (by shifting the above morphism) we require every map of the form
    \[\begin{tikzcd}
        b \arrow[r] & x
    \end{tikzcd}\]
    to be $\otimes$-nilpotent, where $b \in \mathcal{K}^{<0}$ lives in negative degree with respect to the weight structure and $x \in \mathcal{K}^\heartsuit$. By rigidity, we may consider the dual map
    \[\begin{tikzcd}
        b \otimes x^\vee \arrow[r] & \unit
    \end{tikzcd}\]
    and so (as $b \otimes x^\vee \in \mathcal{K}^{<0}$) without loss of generality we may assume $x=\unit$.
    
    By assumption, we can choose a strongly weight-nilpotent weight decomposition of $b$, say filtered by $b_0,\ldots,b_l$ living in negative degrees (with respect to $w$). Thus, for each $n>0$, $b^{\otimes n}$ will be an extension of combinations of these:
    \[\{b_{\psi(1)} \otimes \cdots \otimes b_{\psi(n)} \mid \psi:\{1,\ldots,n\} \rightarrow \{0,\ldots,l\}\}\]
    where $\psi$ ranges over all set-theoretic functions from $\{1,\ldots,n\}$ to $\{0,\ldots,l\}$. Each $b_i$ itself is a coproduct of some finite number of $x_j$'s. By definition, there exists an $n$ for which the morphism groups $\Hom(x_{\psi(1)} \otimes \cdots \otimes x_{\psi(n)},\unit)$ are zero. Distributivity of the coproduct gives us the groups $\Hom(b_{\psi(1)} \otimes \cdots \otimes b_{\psi(n)},\unit)$ are also zero, whence the group $\Hom(b^{\otimes n},\unit)$ is zero too (since this group is a iterated extension of the aforementioned morphism groups). This concludes the proof.
\end{proof}
\begin{remark}
    Another way of seeing that the cone of a $\otimes$-nilpotent morphism $f:c \rightarrow d$---with $d$ a unit---generates the whole category is by inductively using the inclusions
    \[\ideal{\cone(f^{\otimes 2})} \subseteq \ideal{\cone(f)^{\otimes 2}}=\ideal{\cone(f)}\]
    to get
    \[\ideal{c,d}=\ideal{c^{\otimes n},d^{\otimes n}}=\ideal{\cone(f^{\otimes n})} \subseteq \ideal{\cone(f)} \subseteq \ideal{c,d}\]
    (where $n$ is such that $f^{\otimes n}=0$) so that the tt-ideal generated by $\cone(f)$ coincides with the tt-ideal generated by both $c$ and $d$.
\end{remark}
\begin{remark}
    Note by dualising, we could have equally considering the lowest weight of $a$ and the morphism groups $\Hom(\unit,x_{\psi(1)} \otimes \cdots \otimes x_{\psi(n)})$. Indeed, rigidity defines a contravariant functor
    \[\begin{tikzcd}
        (-)^\vee:\mathcal{K} \arrow[r] & \mathcal{K}^{op}
    \end{tikzcd}\]
    and by applying the result to $\mathcal{K}^{op}$ we obtain a dual version of the preceding theorem.
\end{remark}

\begin{theorem}\label{theorem:DTMsatisfiesthecondition}
    $\DTM_{gm}(k/k;\F_2)$ satisfies the conditions of \Cref{theorem:nilpotentspectrum}.
\end{theorem}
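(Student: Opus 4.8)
The plan is to go through the standing hypotheses on $\mathcal{K}$ collected in the paragraph preceding \Cref{def:strongnilpotency}, and then the two additional conditions appearing in \Cref{theorem:nilpotentspectrum}, specialised to $\mathcal{K}=\DTM_{gm}(k/k;\F_2)$ with generating set $\mathcal{S}=\{T(i)[j]\}_{i,j\in\Z}$. Rigidity is immediate: $\DM_{gm}(k/k;\F_2)$ is rigid, being the subcategory of compact objects in the smashing localisation $\Upsilon_{k/k}\otimes\DM(k;\F_2)$ of the rigidly-compactly generated $\DM(k;\F_2)$ (\Cref{remark:smashingisotropic}), and $\DTM_{gm}(k/k;\F_2)$ is a full tt-subcategory closed under duals because $T(i)[j]^\vee=T(-i)[-j]$. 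For the (compatible, bounded) weight structure I would restrict the Chow weight structure supplied by \Cref{thmref:heartofgeometricisotropicmotives}: as $\DTM_{gm}(k/k;\F_2)$ is the thick tt-subcategory generated by the $T(i)[j]$, which are pure of weight $2i-j$, Bondarko's formalism \cite{Bondarko:WeightStructures} produces a bounded weight structure on it whose heart $\mathcal{K}^\heartsuit$ is the additive-Karoubian closure of $\{T(i)[2i]\}_{i\in\Z}$ inside $\Chow_{Num}(k;\F_2)$. From \Cref{thmref:isotropicmotiviccohofpoint} one reads off $\End(T(i)[2i])=H^{0,0}_{iso}(\Spec k;\F_2)=\F_2$ and $\Hom(T(i)[2i],T(i')[2i'])=0$ for $i\neq i'$ (the bidegree in question is realised by no monomial in the $r_l$); hence the $T(i)[2i]$ are pairwise orthogonal bricks, $\mathcal{K}^\heartsuit$ is semisimple abelian with precisely these as simple objects, and every object of $\mathcal{K}^\heartsuit[-n]$ is a finite sum of the weight-$n$ generators $T(i)[2i-n]$. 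Finally $\langle T(i)[j]\rangle=\langle\unit\rangle=\mathcal{K}$ since each $T(i)[j]$ is $\otimes$-invertible. This settles all the setup hypotheses.

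For the first extra condition I would argue that, since $\DM$ (and hence the full stable $\infty$-subcategory underlying $\DTM_{gm}(k/k;\F_2)$) has a stable $\infty$-enhancement, the strong weight complex functor $t\colon\DTM_{gm}(k/k;\F_2)\to\K(\mathcal{K}^\heartsuit)$ exists by \cite{Sosnilo:WeightStructures}; and because $\mathcal{K}^\heartsuit$ is semisimple, $\K(\mathcal{K}^\heartsuit)$ is equivalent to the category of $\Z$-graded objects of $\mathcal{K}^\heartsuit$, so every weight complex is isomorphic to the direct sum of its cohomology, a complex with zero differentials. Transporting this splitting back through the (weakly) homotopy-invariant assignment of weight complexes to weight decompositions shows that the weight complex of any $a$ can indeed be chosen with zero differentials.

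The real content is the last condition: every finite subset of $\mathcal{S}\cap\mathcal{K}^{<0}$ is strongly weight-nilpotent. Since $T(i)[j]$ has weight $2i-j$ we have $\mathcal{S}\cap\mathcal{K}^{<0}=\{T(i)[j]:j>2i\}$; fix a finite subset $\{T(i_k)[j_k]\}_{k=1}^N$, set $d_k=j_k-2i_k\ge1$, $\delta=\max_k d_k$, $M=\max_k i_k$ (we may assume $M+\delta\ge1$, else the estimate below is vacuous). For any $\psi\colon\{1,\dots,n\}\to\{1,\dots,N\}$ the tensor product equals $T(I)[J]$ with $I=\sum_l i_{\psi(l)}$, $J=\sum_l j_{\psi(l)}$, and $d:=J-2I=\sum_l d_{\psi(l)}\ge n$. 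By rigidity $\Hom(T(I)[J],\unit)\cong H^{-J,-I}_{iso}(\Spec k;\F_2)$, and \Cref{thmref:isotropicmotiviccohofpoint} identifies this with the $\F_2$-span of the monomials $r_{l_1}\cdots r_{l_s}$, $l_1<\dots<l_s$, of cohomological degree $s-2P=-J$ and weight $s-P=-I$, where $P=2^{l_1}+\dots+2^{l_s}$; eliminating forces $P=J-I=I+d$ and $s=d$, so the group is nonzero exactly when $I+d\ge0$ and the binary expansion of $I+d$ has precisely $d$ ones. Now $I+d\le n(M+\delta)$, so when $I+d\ge1$ the number of ones in its binary expansion is at most $\log_2 n+\log_2(M+\delta)+1$, while $d\ge n$; choosing $n$ with $n>\log_2 n+\log_2(M+\delta)+1$ (possible, the right side being $o(n)$) rules out equality when $I+d\ge1$, and $I+d\le0$ makes the group vanish directly. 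Hence for this $n$ one has $\Hom(T(i_{\psi(1)})[j_{\psi(1)}]\otimes\cdots\otimes T(i_{\psi(n)})[j_{\psi(n)}],\unit)=0$ for all $\psi$, i.e. strong weight-nilpotency, and every hypothesis of \Cref{theorem:nilpotentspectrum} is in place.

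I expect the combinatorial estimate in the last paragraph to be the main obstacle: it is the only step that genuinely uses the exponentially sparse distribution of the nonvanishing bidegrees of the exterior algebra $\Lambda_{\F_2}(r_0,r_1,\dots)$ — played against the merely linear growth of $d$ and $I+d$ in the tensor length $n$ — and the care needed lies in correctly bookkeeping which bidegrees are hit (the "binary digit-sum equals $d$" condition) and in the sign and boundary cases when some of the $i_k$ are negative or $M+\delta$ is small.
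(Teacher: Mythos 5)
Your proposal is correct and follows essentially the same route as the paper: verify the setup hypotheses, use semisimplicity of the heart to force zero differentials, and then play the exponential sparsity of $H^{**}_{iso}(\Spec(k);\F_2)$ against the linear growth of the bidegrees of the $n$-fold tensor products $T_\psi$. Your binary digit-sum reformulation ($\Hom(T(I)[J],\unit)\neq 0$ iff $I+d\ge1$ and the binary expansion of $I+d$ has exactly $d$ ones, which forces $I+d\ge 2^d-1$ and hence fails once $n>\log_2 n+\log_2(M+\delta)+1$) is a cleaner, more explicit packaging of the paper's geometric ``$Chow_n$-elevation versus convex hull of the $r_0\cdots r_l$'' estimate, but the core inequality is the same.
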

\begin{proof}
    We must verify the conditions of \Cref{theorem:nilpotentspectrum}.
    The preamble is immediate, taking the generating set $\mathcal{S}$ to be the set $\{T(i)[j] \in \DM_{gm}(k/k;\F_2) \mid (i,j) \in \Z^2\}$. We can always guarantee the weight complex of any object consists of zero differentials, since the heart of our weight structure is semisimple abelian (it is equivalent to the category of finite-dimensional graded $\F_2$-vector spaces). It remains to verify the strong weight-nilpotency condition. So let $X \in \DM_{gm}(k/k;\F_2)^{<0}$ be an isotropic Tate motive living in negative degree with respect to the (isotropic) Chow weight structure. Note that any $T(i)[j] \in \DTM_{gm}(k/k;\F_2)^\heartsuit$ is of the form $T(i)[2i]$, so $T(i)[j]$ lives in $\DTM_{gm}(k/k;\F_2)^\heartsuit[-n]$ iff $j-2i=-n$.
    
    $X$ is then an extension of finitely many isotropic Tate motives living above the \emph{Chow line}, $Chow_0$. That is, $X$ is an extension of finitely many Tates $x_i=T(a_i)[b_i]$ (say, $i=1,\ldots,m$) for which $b_i-2a_i>0$, and we must show that any such finite set is strongly weight-nilpotent. Then in the figure below, the positions of these Tates for which $X$ is an extension of are concentrated in the horizontal axis $[-M,M]$ for some positive integer $M$. Then the `round' $(a)$ coordinates of the Tates $T_\psi \coloneqq x_{\psi(1)} \otimes \cdots \otimes x_{\psi(n)}$ will live in $[-Mn,Mn]$ for every $\psi:\{1,\ldots,n\} \rightarrow \{1,\ldots,m\}$. Let us denote the line $b=2a+l$, the \emph{$l^{th}$-Chow line}, as $Chow_l$. The conditions $b_i-2a_i>0$ forces each graded piece to lie at least distance $1$ away from the Chow line $Chow_0$, i.e. lie at least on or above $Chow_1$, so the coordinates of $T_\psi$ lie at least on or above the line $Chow_n$. The isotropic motivic cohomology of the point is discrete: it is given by the exterior algebra $\Lambda_{\F_2}(r_i \mid i=0,1,\ldots)$ with generators $r_i$ living in bidegrees $(-2^i+1)[-2^{i+1}+1]$ (corresponding to the duals of the Milnor operations)---\Cref{thmref:isotropicmotiviccohofpoint}. Thus, the `limiting bound' coordinates for these nonzero cohomology groups are given by the points corresponding to $r_0r_1 \cdots r_l$ for each $l$, which live in bidegree
    \[\sum_{i=0}^l (-2^i+1)[-2^{i+1}+1] = (-2^{l+1}+l+2)[-2^{l+2}+l+3].\]
    By `limiting bound' we mean the coordinates of every other nonzero cohomology class lies in the convex hull of these points. To verify strong weight-nilpotency we must check the Tates $T_\psi$ corresponding to functions $\psi:\{1,\ldots,n\} \rightarrow \{1,\ldots,m\}$ fail to reach the origin via a combination of these cohomology classes $r_i$ for sufficiently large $n$. In other words, we need to show that the sum of the coordinates $(a_i,b_i)$ associated to the Tates $T(a_i)[b_i]$ cannot coincide with the negative of the coordinates of the nonzero isotropic motivic cohomology groups of the point $\Spec(k)$ as long as we take arbitrarily many terms of the sum. Thus it suffices to show the sums of $n$-times these coordinates are disjoint from the region corresponding to the convex hull of the points $(2^{l+1}-l-2)[2^{l+2}-l-3]$. But note that this region reaches the $l^{th}$-Chow line $Chow_l$ after $l$ steps (the point corresponding to $r_0r_1 \cdots r_{l-1}$ lies on the line $Chow_l$). But at this elevation, the $(a)$ coordinate of such points are asymptotically $2^{l+1}$ to the right, whereas the coordinates of the $T_\psi$ are bounded by $[-Mn,Mn]$, so that the green, dashed piecewise linear curve in the figure below grows exponentially to the right to reach any given Chow line, whereas the coordinates of our $T_\psi$ of interest only grow linearly. This means there are no morphisms from each $T_\psi$ to the tensor-unit $T$ for sufficiently large $n$, so we have proven the strong weight-nilpotency condition we are after. This concludes the proof.
    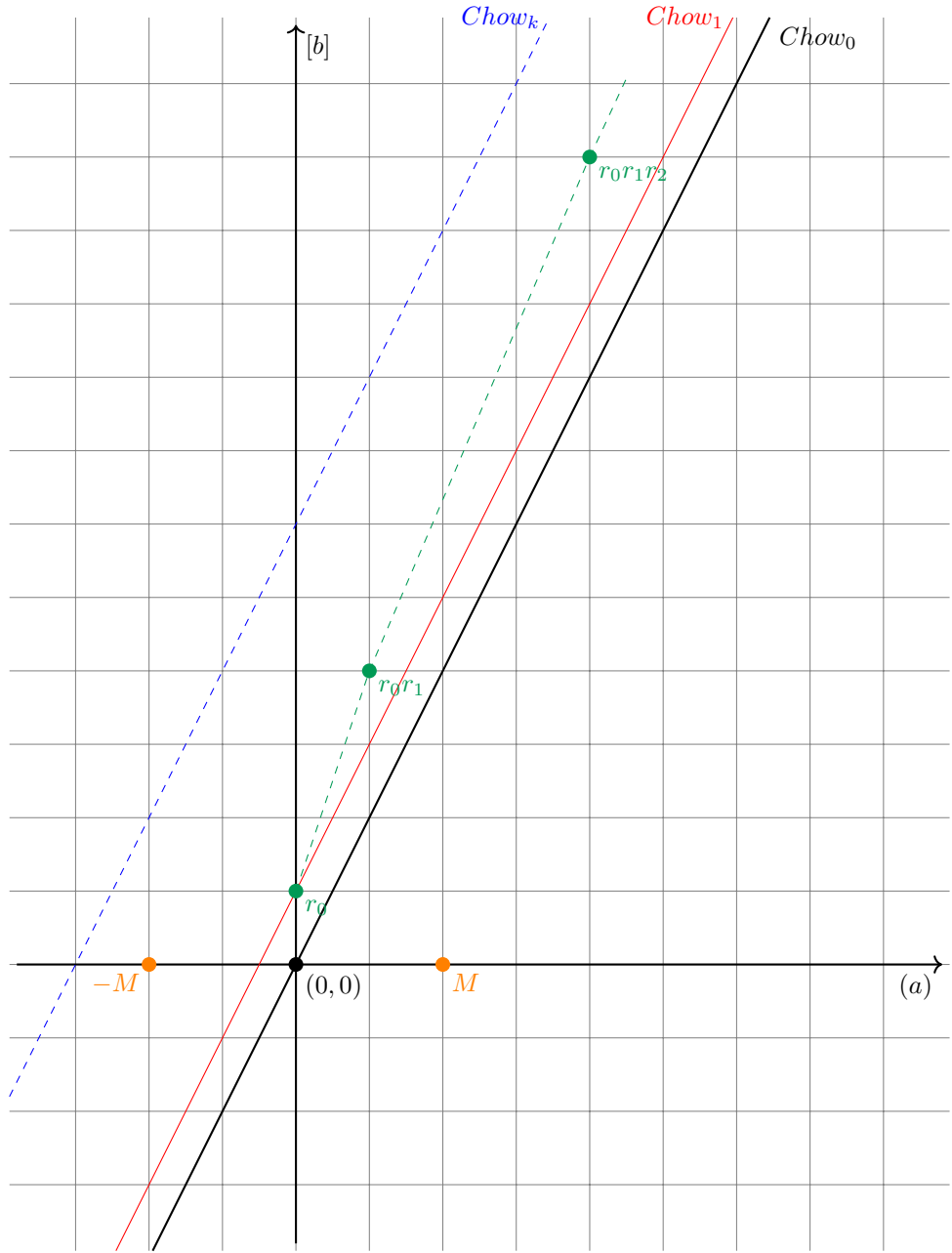
\begin{figure}[h!]
    \begin{center}
        \begin{tikzpicture}
            \draw[step=1cm,gray,very thin] (-3.9,-3.9) grid (8.9,12.9);
            \draw[thick,->] (-3.8,0) -- (8.8,0) node[anchor=north east] {$(a)$};
            \draw[thick,->] (0,-3.8) -- (0,12.8) node[anchor=north west] {$[b]$};
            \draw[thick] (-1.95,-3.9) -- (6.45,12.9) node[anchor=north west] {$Chow_0$};
            \draw[thin, red] (-2.45,-3.9) -- (5.95,12.9) node[anchor=east] {$Chow_1$};
            \draw[thin, blue, dashed] (-3.9,-1.8) -- (3.45,12.9) node[anchor=east] {$Chow_k$};
            \fill[black] (0,0) circle (0.1cm) node[anchor=north west] {$(0,0)$};
            \fill[orange] (2,0) circle (0.1cm) node[anchor=north west] {$M$};
            \fill[orange] (-2,0) circle (0.1cm) node[anchor=north east] {$-M$};
            \fill[ForestGreen] (0,1) circle (0.1cm) node[anchor=north west] {$r_0$};
            \fill[ForestGreen] (1,4) circle (0.1cm) node[anchor=north west] {$r_0r_1$};
            \fill[ForestGreen] (4,11) circle (0.1cm) node[anchor=north west] {$r_0r_1r_2$};
            \draw[thin, ForestGreen, dashed] (0,1) -- (1,4);
            \draw[thin, ForestGreen, dashed] (1,4) -- (4,11);
            \draw[thin, ForestGreen, dashed] (4,11) -- (4.5,11+15/14);
        \end{tikzpicture}
    \end{center}
    \caption{The coordinates of the associated graded pieces of $X^{\otimes n }$ lie above $Chow_n$ and have $(a)$-coordinate bounded between $-Mn$ and $Mn$ for some fixed $M$. The negation of the coordinates of the isotropic motivic cohomology groups of the point $\Spec(k)$ are bounded above by the green dashed line. For large enough $n$, these regions are disjoint.}
    \end{figure}
\end{proof}
\begin{cor}\label{cor:spectrumofDTM}
    The Balmer spectrum $\Spc(\DTM_{gm}(k/k;\F_2))$ is a singleton.
\end{cor}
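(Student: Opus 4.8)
The plan is to obtain this as a formal consequence of \Cref{theorem:nilpotentspectrum} together with \Cref{theorem:DTMsatisfiesthecondition}. First I would check that $\DTM_{gm}(k/k;\F_2)$ is a rigid tt-category of the shape required by the preamble to \Cref{theorem:nilpotentspectrum}: it is generated by the Tate objects $\{T(i)[j]\}_{i,j \in \Z}$, each of which is $\otimes$-invertible and hence dualisable, and by \Cref{thmref:heartofgeometricisotropicmotives} it carries the bounded isotropic Chow weight structure whose heart, restricted to Tate motives, is the semisimple abelian category of finite-dimensional graded $\F_2$-vector spaces. Each generator $T(i)[j]$ lies in a single weight, generates the whole category (Tate twists being invertible, $\ideal{T(i)[j]} = \DTM_{gm}(k/k;\F_2)$), and every object of $\DTM_{gm}(k/k;\F_2)^\heartsuit[-n]$ is a coproduct of generators of the appropriate weight. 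So the hypotheses preceding \Cref{theorem:nilpotentspectrum} are in place.

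Next I would invoke \Cref{theorem:DTMsatisfiesthecondition} to supply the two remaining hypotheses of \Cref{theorem:nilpotentspectrum}: that the weight complex of every object can be chosen with vanishing differentials---immediate from semisimplicity of the heart---and that every object of $\DTM_{gm}(k/k;\F_2)^{<0}$ admits a weight decomposition through a strongly weight-nilpotent subset of the Tate generators. Granting these, \Cref{theorem:nilpotentspectrum} yields directly that $\Spc(\DTM_{gm}(k/k;\F_2)) = \{(0)\}$, the desired singleton, with $(0)$ the unique prime tt-ideal.

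The only nontrivial input is the strong weight-nilpotency condition, and that is precisely where the real work lies; but it is exactly the content of \Cref{theorem:DTMsatisfiesthecondition}, whose proof compares the linear growth (in the number $n$ of tensor factors) of the $(a)$-coordinates of the graded pieces of $X^{\otimes n}$ against the exponential growth required to reach the successive Chow lines $Chow_l$ via the isotropic motivic cohomology classes $r_0 r_1 \cdots r_l$ of $\Spec(k)$ provided by \Cref{thmref:isotropicmotiviccohofpoint}. Once that comparison is granted, no further obstacle remains: the corollary follows with no additional argument.
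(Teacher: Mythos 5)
Your proposal is correct and takes exactly the paper's route: \Cref{cor:spectrumofDTM} is a direct consequence of \Cref{theorem:nilpotentspectrum} once \Cref{theorem:DTMsatisfiesthecondition} has verified its hypotheses for $\DTM_{gm}(k/k;\F_2)$, and the paper gives no further proof. Your additional verification of the preamble conditions (rigidity, the weight structure from \Cref{thmref:heartofgeometricisotropicmotives}, each Tate generating the whole category) is a sound and welcome elaboration of what the paper leaves implicit.
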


\begin{theorem}\label{theorem:spectraofDAM/DATM}
    All of the above holds if we replace $\DTM_{gm}(k/k;\F_2)$ with isotropic Artin motives $\DAM_{gm}(k/k;\F_2)$ or isotropic Artin--Tate motives $\DATM_{gm}(k/k;\F_2)$. That is, they satisfy the conditions of \Cref{theorem:nilpotentspectrum}. In particular, each of these categories have a singleton spectrum.
\end{theorem}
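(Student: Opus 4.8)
The plan is to verify, for each of $\DAM_{gm}(k/k;\F_2)$ and $\DATM_{gm}(k/k;\F_2)$, the hypotheses of \Cref{theorem:nilpotentspectrum}, following the template of \Cref{theorem:DTMsatisfiesthecondition} more or less verbatim. The only genuinely new ingredient is a computation of the groups $\Hom(x_{\psi(1)}\otimes\cdots\otimes x_{\psi(n)},\unit)$ for the new generators, which will be supplied by combining \Cref{thmref:isotropicmotiviccohofpoint} with the results of \cite{Tanania:IsotropicFundamentalGroups} referred to in the introduction.

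For the preamble I would take $\mathcal{S}=\{M(E)(i)[j]\mid E/k\text{ finite}, (i,j)\in\Z^2\}$ for $\DATM_{gm}(k/k;\F_2)$, and $\mathcal{S}=\{M(E)[j]\mid E/k\text{ finite}, j\in\Z\}$ for $\DAM_{gm}(k/k;\F_2)$ (if one wants $\mathcal{S}$ to consist of indecomposables one may further decompose $M(E)$ into its simple summands in the numerical heart, which is harmless). Since these subcategories are generated by objects of the heart of the (isotropic) Chow weight structure of \Cref{thmref:heartofgeometricisotropicmotives}, that weight structure restricts to them by Bondarko's machinery, and their hearts are thick subcategories of the semisimple category $\Chow_{Num}(k;\F_2)$, hence themselves semisimple; consequently weight complexes can be chosen with vanishing differentials, and each shifted heart $\mathcal{K}^\heartsuit[-n]$ is generated under coproducts and summands by the weight-$n$ generators, since $M(E)(i)[j]$ has weight $2i-j$. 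Finally $\ideal{M(E)(i)[j]}=\mathcal{K}$ for every generator: in characteristic $0$ (\Cref{hyp:flexiblechar0}) $M(E)$ is self-dual, being the motive of a $0$-dimensional smooth projective $k$-scheme, and $M(E)\otimes M(E)\cong M(E\otimes_k E)$ splits off a copy of $T=\unit$ along the multiplication $E\otimes_k E\twoheadrightarrow E$, so $T\in\ideal{M(E)(i)[j]}$.

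The substance is the strong weight-nilpotency condition. Here I would exploit that the classes of Artin and of Artin--Tate motives are closed under $\otimes$: $M(E_1)(i_1)[j_1]\otimes M(E_2)(i_2)[j_2]=M(E_1\otimes_k E_2)(i_1+i_2)[j_1+j_2]$ is a finite direct sum of motives $M(E')(i_1+i_2)[j_1+j_2]$ of finite extensions $E'/k$, so any $T_\psi=x_{\psi(1)}\otimes\cdots\otimes x_{\psi(n)}$ is a finite direct sum of objects $M(E')(A)[B]$ with $A=\sum_\ell i_{\psi(\ell)}$ and $B=\sum_\ell j_{\psi(\ell)}$. By additivity it then suffices to control $\Hom(M(E')(A)[B],\unit)=H^{-B,-A}_{iso}(\Spec E';\F_2)$, and by \cite{Tanania:IsotropicFundamentalGroups} this coincides with $H^{-B,-A}_{iso}(\Spec k;\F_2)$, i.e.\ with the corresponding bigraded piece of $\Lambda_{\F_2}(r_0,r_1,\ldots)$ from \Cref{thmref:isotropicmotiviccohofpoint}. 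If $X\in\mathcal{K}^{<0}$ then its weight graded pieces lie strictly above $Chow_0$ with $(a)$-coordinates in some $[-M,M]$, so the graded pieces of $X^{\otimes n}$ lie on or above $Chow_n$ with $(a)$-coordinates in $[-Mn,Mn]$, whereas the nonzero classes of $\Lambda_{\F_2}(r_0,r_1,\ldots)$, after negation, have $(a)$-coordinate at least $2^l-1-l$ on $Chow_l$; the identical exponential-versus-linear comparison (and figure) used in \Cref{theorem:DTMsatisfiesthecondition} then gives $\Hom(T_\psi,\unit)=0$ for all $\psi$ once $n$ is large. For $\DAM_{gm}(k/k;\F_2)$ this is immediate without asymptotics, since every generator has $i=0$ and $\Hom(M(E')[B],\unit)=H^{-B,0}_{iso}(\Spec k;\F_2)$ already vanishes for $B\geq 2$, hence for $n\geq 2$. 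With all hypotheses verified, \Cref{theorem:nilpotentspectrum} yields that $\Spc(\DAM_{gm}(k/k;\F_2))$ and $\Spc(\DATM_{gm}(k/k;\F_2))$ are the singleton $\{(0)\}$.

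I expect the only delicate points to be bookkeeping: confirming that the isotropic Chow weight structure really does restrict to these subcategories with semisimple heart, and—more substantively—pinning down precisely which bidegrees of $H^{**}_{iso}(\Spec E';\F_2)$ are nonzero, i.e.\ that \cite{Tanania:IsotropicFundamentalGroups} identifies it with $H^{**}_{iso}(\Spec k;\F_2)$ throughout the range needed, so that the exponential-growth obstruction transplants from the Tate case without change. The geometric heart of the argument is then identical to that of \Cref{theorem:DTMsatisfiesthecondition}.
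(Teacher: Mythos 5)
Your approach is the same as the paper's: verify the hypotheses of \Cref{theorem:nilpotentspectrum} by repeating the bidegree bookkeeping from the Tate case, using Tanania's identification of the relevant Hom-groups. The paper's proof is slightly more careful in two places where you are a bit loose, though both are easily repaired. First, your generating set $\mathcal{S}$ includes $M(E)(i)[j]$ for \emph{every} finite $E/k$, but $M(E)=0$ in the isotropic category whenever $[E:k]$ is even (this is the point of invoking Tanania), and then $\ideal{M(E)(i)[j]}=\mathcal{K}$ is false. You need to discard those vanishing generators (equivalently, restrict to odd-degree extensions), and correspondingly the identification $H^{**}_{iso}(\Spec E';\F_2)\cong H^{**}_{iso}(\Spec k;\F_2)$ holds only in the odd case, with the even summands simply contributing zero. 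Second, your argument that $T\in\ideal{M(E)(i)[j]}$ via the multiplication $E\otimes_k E\twoheadrightarrow E$ only shows directly that $M(E)$ is a summand of $M(E)\otimes M(E)$; you still need that $T$ is a summand of $M(E)$, which holds for odd $[E:k]$ because the degree map splits mod $2$. The paper sidesteps both points more cleanly by taking $X$ to be any \emph{nonzero} object of the semisimple heart and observing $0\neq\Hom(X,X)\simeq\Hom(T,X\otimes X^\vee)$, which immediately puts $T$ in $\ideal{X}$. Everything else in your proposal — the restriction of the Chow weight structure with semisimple heart, the decomposition $M(E_1)\otimes M(E_2)=M(E_1\otimes_k E_2)$ in characteristic $0$, and the exponential-versus-linear comparison reused from \Cref{theorem:DTMsatisfiesthecondition} — matches the paper.
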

\begin{proof}
    Let $\mathcal{K}$ be one of the categories $\DAM_{gm}(k/k;\F_2)$ or $\DATM_{gm}(k/k;\F_2)$. In either case, the weight structure on $\DM_{gm}(k/k;\F_2)$ restricts to $\mathcal{K}$ whose heart consists of pure isotropic Artin motives for $\DAM_{gm}(k/k;\F_2)$ and pure isotropic Artin motives with a pure shift $(-)(i)[2i]$ for $\DATM_{gm}(k/k;\F_2)$. We must calculate the isotropic motivic cohomology of $\Spec(E)$ for some field extension $E$ over $k$, which is given by
    \[H_{iso}^{**}(\Spec(E);\F_2) \simeq H_{iso}^{**}(\Spec(k);\F_2) \otimes BP_{iso}^{**}(\Spec(E))\]
    by \cite[Lemma~9.1]{Tanania:CellularObjects}/\cite[Rmk.~4.7]{Tanania:IsotropicFundamentalGroups}. By \cite[Prop.~5.7]{Tanania:IsotropicFundamentalGroups} the isotropic motivic mod $2$ $BP$-theory of a field $F$ is given by its isotropic Milnor $K$-theory
    \[BP^{**}(\Spec(F)) \simeq k^M_*(F/k).\]
    So, let $E/k$ be a finite field extension. If $[E:k]$ is even, then $M(\Spec(E))$ vanishes in $\mathcal{K}$ (alternatively we could invoke \cite[Rmk.~5.2]{Tanania:IsotropicFundamentalGroups}). If $[E:k]$ is odd, then $k^M_*(E/k)$ is $\F_2$ concentrated in degree $0$ (\cite[Prop.~6.5]{Tanania:IsotropicFundamentalGroups}), so its isotropic motivic cohomology coincides with that of the trivial Tate motive. We just need to verify that the tensor product of two motives of field extensions also has its isotropic motivic cohomology concentrated in the same degrees. This follows from noting that if $E,F$ are two field extensions of $k$, then their tensor product $E \otimes_k F$ decomposes as a product of field extensions over $k$ when either one of $E$ or $F$ is separable over $k$.
    
    Now the proof of \Cref{theorem:DTMsatisfiesthecondition} carries over almost verbatim: we just need to note that any nonzero object $X$ of the heart (that is, some summand of $M(E)$ for some field extension $E$ over $k$) generates the whole category. Indeed, the heart is semisimple abelian, so the tensor-unit $\unit=T=M(\Spec(k))$ is a summand of $X \otimes X^\vee$:
    \[0 \neq \Hom(X,X) \simeq \Hom(T,X \otimes X^\vee)\]
    so that $T \in \ideal{X}$, and we are done.
\end{proof}
\begin{remark}
    Let $\mathcal{K}$ be any one of the categories $\DTM(k/k;\F_p)$, $\DAM(k/k;\F_p)$, $\DATM(k/k;\F_p)$. The endomorphism ring $\End_\mathcal{K}(\unit)$ of the unit in $\mathcal{K}$ is given by $\F_p$, so we find that each of the Balmer spectra $\Spc(\mathcal{K})$ calculated are isomorphic to $\Spec(\F_p)$ as locally ringed spaces.
\end{remark}
\begin{remark}
    In view of the categories $\DTM_{gm}(k/k;\F_p)$, $\DAM_{gm}(k/k;\F_p)$ being full subcategories of $\DATM_{gm}(k/k;\F_p)$, we could have also deduced the former two categories have singleton spectra (for $k$ flexible and $p=2$) by only considering the calculation for $\DATM(k/k;\F_2)$ and then invoking \cite[Cor.~1.8]{Balmer:Conservative}.
\end{remark}

It is worth noting what goes wrong when we try to apply to same proof of \Cref{theorem:DTMsatisfiesthecondition} to the entire category of isotropic motives $\DM_{gm}(k/k;\F_2)$. As already mentioned in the proof of the above, the isotropic motivic cohomology of a smooth variety $X$ is given by
\[H_{iso}^{**}(X;\F_2) \simeq H_{iso}^{**}(\Spec(k);\F_2) \otimes BP_{iso}^{**}(X)\]
where $BP_{iso}^{**}(X)$ denotes the isotropic motivic mod $2$ BP-theory of $X$. The proof of \Cref{theorem:DTMsatisfiesthecondition} breaks down in this generality since we have the potential existence of nontrivial isotropic motivic cohomology classes (that is, coming from the $BP$-theory, rather than the cohomology of $\Spec(k)$) living below the Chow line $Chow_0$, arising from positive-dimensional smooth projective varieties $X$. For instance, a nontrivial class $r \in H_{iso}^{1,1}(X;\F_2)$ can be seen to obstruct the line of reasoning of the proof: $X(-1)[-1]$ `lives' on the Chow line $Chow_1$ and has a morphism to the origin, corresponding to $r$. We currently do not have any computations of nontrivial isotropic $BP$-groups for positive dimensional varieties, however it is plausible that the existence of such could give rise to nontrivial primes in $\DM_{gm}(k/k;\F_2)$. For instance, a nontrivial class $f \in H_{iso}^{**}(X;\F_2)$ may be non-$\otimes$-nilpotent. Then, since for any $\mathcal{K}$ and morphism $f$, $\mathcal{I}_f \coloneqq \{a \in \mathcal{K} \mid \exists n>0, \ f^{\otimes n} \otimes \id_a=0\}$ is a tt-ideal by \cite[Prop.~2.12]{Balmer:Spectra}, this could provide a straightforward method to verify new points of the spectrum $\Spc(\DM_{gm}(k/k;\F_2)$. Of course, one would need $f$ to be $\otimes$-nilpotent on some nonzero object $a$, and then this would imply $a$ is contained in some nonzero prime ideal: $f$ failing to be $\otimes$-nilpotent ensures the tt-ideal $\mathcal{I}_f$ is proper (for the unit fails to reside there), so $f$ being $\otimes$-nilpotent on $a \neq 0$ guarantees $a$ is contained in some nontrivial prime $\mathcal{P} \supseteq \mathcal{I}_f \supsetneq (0)$. Thus, it is imperative to understand the isotropic $BP$-theory of positive dimensional varieties to shed light on the tensor-triangular geometry of $\DM_{gm}(k/k;\F_2)$.

\section{The homological spectrum}
We begin this section by proving a series of results on the `big' tt-categories $\mathcal{T}$ that are rigidly-compactly generated by their compact part $\mathcal{K}$, the latter of which possesses a weight structure $w$ with semisimple abelian heart $\mathcal{A}$. All of the results then specialise to the category of `big' isotropic motives (and moreover to the subcategories of (isotropic) Tate, Artin and Artin--Tate motives), which is vital to understanding the homological spectrum of the `small' category. Throughout this section, as in the preceding one, when considering isotropic motives, $p$ will denote any prime, and our field $k$ will always be flexible and of characteristic $0$ when considering isotropic categories (\Cref{hyp:flexiblechar0}), unless stated otherwise. $\mathcal{K}$ will always denote a rigid tt-category which rigidly-compactly generates a big tt-category $\mathcal{T}$; $\mathcal{A}$ will always denote a semisimple abelian category.

\begin{prop}
    Suppose $\mathcal{K}$ has a weight structure $w$ whose heart $\mathcal{A}$ is a semisimple abelian category. Then we can extend this weight structure to a weight structure $w'$ on $\mathcal{T}$ whose heart $\mathcal{T}^\heartsuit$ is the coproduct completion $\mathcal{A}_\oplus$ of $\mathcal{A}$.
\end{prop}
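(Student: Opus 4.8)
The plan is to realise $w'$ as the weight structure on $\mathcal{T}$ \emph{generated by} the heart $\mathcal{A}=\mathcal{K}^\heartsuit$, regarded as an essentially small set of compact objects, and then to identify its heart. First I would record that $\mathcal{A}$ is \emph{negative}, i.e. $\Hom_\mathcal{T}(X,Y[i])=0$ for all $X,Y\in\mathcal{A}$ and all $i>0$: indeed $\Hom_\mathcal{T}(X,Y[i])=\Hom_\mathcal{K}(X[-i],Y)$ with $X[-i]\in\mathcal{K}^{\geq i}\subseteq\mathcal{K}^{\geq 1}$ and $Y\in\mathcal{K}^{\leq 0}$, so this vanishes by the orthogonality axiom for $w$. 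Since $w$ is bounded, every object of $\mathcal{K}$ is a finite iterated extension of shifts of objects of $\mathcal{A}$, hence $\mathcal{K}$ is the thick subcategory of $\mathcal{T}$ generated by $\mathcal{A}$, and therefore $\Loc(\mathcal{A})=\Loc(\mathcal{K})=\mathcal{T}$.

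Next I would invoke the standard result that a negative set $\mathcal{P}$ of compact objects generating $\mathcal{T}$ induces a (smashing) weight structure $w'$ on $\mathcal{T}$ in which $\mathcal{T}^{w'\leq 0}$ is the closure of $\bigcup_{n\geq 0}\mathcal{P}[n]$ under coproducts, extensions and retracts, $\mathcal{T}^{w'\geq 0}=(\mathcal{T}^{w'\leq -1})^{\perp}$, and the heart $\mathcal{T}^\heartsuit$ is the closure of $\mathcal{P}$ under coproducts, extensions and retracts in $\mathcal{T}$ (see \cite{Bondarko:WeightStructures}, or \cite{Sosnilo:WeightStructures} for an $\infty$-categorical treatment). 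Taking $\mathcal{P}=\mathcal{A}$, this $w'$ extends $w$: it restricts to a weight structure on the thick subcategory $\mathcal{K}$ since $\mathcal{K}$ is generated by $\mathcal{A}\subseteq\mathcal{T}^\heartsuit$ with bounded weights; the restriction is bounded with heart $\mathcal{A}$, because a compact coproduct of objects of $\mathcal{A}$ is necessarily a finite one and hence lies in $\mathcal{A}$; and a bounded weight structure is determined by its heart, so the restriction is $w$.

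It remains to identify $\mathcal{T}^\heartsuit$ with the coproduct completion $\mathcal{A}_\oplus$, i.e. with the class of arbitrary coproducts of objects of $\mathcal{A}$. On one hand $\mathcal{A}_\oplus\subseteq\mathcal{T}^{w'\leq 0}$ and $\mathcal{A}_\oplus\subseteq\mathcal{T}^{w'\geq 0}$ (the latter because $\mathcal{T}^{w'\geq 0}$ is closed under coproducts and contains $\mathcal{A}$), so $\mathcal{A}_\oplus\subseteq\mathcal{T}^\heartsuit$. On the other hand $\mathcal{A}_\oplus$ is itself closed under coproducts, extensions and retracts in $\mathcal{T}$, whence $\mathcal{T}^\heartsuit\subseteq\mathcal{A}_\oplus$. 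Closure under extensions uses compactness of the objects of $\mathcal{A}$: for $a=\coprod_i a_i$ and $c=\coprod_j c_j$ with $a_i,c_j\in\mathcal{A}$ we have
\[
\Hom_\mathcal{T}(c,a[1])=\prod_j\Hom_\mathcal{T}\bigl(c_j,{\textstyle\coprod_{i}}\,a_i[1]\bigr)=\prod_j\coprod_i\Hom_\mathcal{K}(c_j,a_i[1])=0
\]
by negativity of $\mathcal{A}$, so every triangle $a\to b\to c\to a[1]$ in $\mathcal{T}$ splits and $b\cong a\oplus c\in\mathcal{A}_\oplus$; closure under retracts holds since $\mathcal{A}_\oplus$ is again a semisimple abelian category, in particular idempotent complete, so idempotents on its objects split in $\mathcal{A}_\oplus\subseteq\mathcal{T}$ and, by uniqueness of splittings, their images in $\mathcal{T}$ lie in $\mathcal{A}_\oplus$. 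Therefore $\mathcal{T}^\heartsuit=\mathcal{A}_\oplus$.

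The one step that genuinely requires care is the middle one: one has to pin down which ``big hull'' of $\mathcal{A}$ the generation theorem produces and verify that it meets $\mathcal{K}=\mathcal{T}^c$ in precisely $\mathcal{K}^{w\leq 0}$, which rests on boundedness of $w$ together with compactness. By contrast the heart computation is essentially forced: it hinges only on the vanishing $\Hom_\mathcal{K}(\mathcal{A},\mathcal{A}[1])=0$ and compactness, with semisimplicity of $\mathcal{A}$ entering solely to ensure that $\mathcal{A}_\oplus$ is idempotent complete.
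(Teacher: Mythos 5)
Your proof is correct, but it takes a more hands-on route than the paper. The paper's proof is essentially a one-line citation of a theorem of Bondarko on extending weight structures along the passage from a compactly generated category to its compact part, which describes $\mathcal{T}^{w'\leq 0}$ as the idempotent completion of the coproduct-and-extension closure of $\mathcal{K}^{w\leq 0}$ and $\mathcal{T}^{w'\geq 0}$ as the left orthogonal of $\mathcal{T}^{w'\leq -1}$; the only work the paper then does is to observe (via the subsequent proposition on semisimple abelian categories) that the "formal coproductive hull" appearing in that theorem coincides with the honest coproduct completion $\mathcal{A}_\oplus$. You instead reconstruct $w'$ from first principles as the weight structure generated by the negative set $\mathcal{A}$ of compact objects, check explicitly that its restriction to $\mathcal{K}$ recovers $w$ (using that compact coproducts of objects of $\mathcal{A}$ are finite and that bounded weight structures are determined by their hearts), and then prove both inclusions $\mathcal{A}_\oplus\subseteq\mathcal{T}^\heartsuit$ and $\mathcal{T}^\heartsuit\subseteq\mathcal{A}_\oplus$ directly, with the $\Hom(c,a[1])=\prod_j\coprod_i\Hom(c_j,a_i[1])=0$ computation carrying the weight. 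What your approach buys is transparency about exactly where each hypothesis enters: compactness for the $\coprod$ in the $\Hom$-computation and for deducing $\Loc(\mathcal{A})=\mathcal{T}$, boundedness for reconstructing $\mathcal{K}$ from $\mathcal{A}$, and semisimplicity \emph{only} for the idempotent-completeness of $\mathcal{A}_\oplus$ (whereas the paper uses semisimplicity somewhat less surgically, via the identification of hulls). What the paper buys is brevity, at the cost of offloading all the technical content to the cited theorem. Both are fine; yours is closer to a proof, the paper's is closer to a reference.
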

\begin{proof}
    This is an immediate consequence of \cite[Thm.~4.1.2]{Bondarko:Hurewicz} applied to our situation. The explicit description of the non-positive part $\mathcal{T}^{\leq 0}$ is as the idempotent completion of the coproduct and extension closure of $\mathcal{K}^{\leq 0}$ in $\mathcal{T}$. The non-negative part is then given by
    \[\mathcal{T}^{\geq 0} \coloneqq \prescript{\perp}{}{(\mathcal{T}^{\leq -1})}.\]
    The only nontrivial thing to note is that in our situation, the `formal coproductive hull' of $\mathcal{A}$ as defined in \textit{loc. cit.} coincides with the `arbitrary coproduct completion' $\mathcal{A}_\oplus$ of $\mathcal{A}$ by the following \Cref{prop:semisimpleabeliancompletion}.
\end{proof}

\begin{prop}\label{prop:semisimpleabeliancompletion}
    Suppose $\mathcal{A}$ is a semisimple abelian category, such that each $X \in \mathcal{A} \subseteq \mathcal{A}_\oplus$ is compact, where $\mathcal{A}_\oplus$ is the `arbitrary coproduct completion' of $\mathcal{A}$. Then $\mathcal{A}_\oplus$ is also a semisimple abelian category.
\end{prop}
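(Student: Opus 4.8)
The plan is to identify $\mathcal{A}_\oplus$ with a set-indexed product of module categories over division rings, each of which is visibly semisimple abelian, and then to observe that this property is inherited by such products.

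First I would appeal to the Wedderburn-type structure of $\mathcal{A}$. In all our applications $\mathcal{A}$ is the heart of a weight structure on the rigid (hence essentially small) category $\mathcal{K}$, so we may assume $\mathcal{A}$ is essentially small and fix a set $\{S_\lambda\}_{\lambda\in\Lambda}$ of representatives of the isomorphism classes of its simple objects. By Schur's lemma $D_\lambda\coloneqq\End_\mathcal{A}(S_\lambda)$ is a division ring and $\Hom_\mathcal{A}(S_\lambda,S_\mu)=0$ for $\lambda\neq\mu$, while semisimplicity gives that every object of $\mathcal{A}$ is a finite coproduct of copies of the $S_\lambda$.

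Next I would unwind the coproduct completion. Since the objects of $\mathcal{A}$ are compact in $\mathcal{A}_\oplus$, the completion has the usual ``column-finite matrix'' description of morphisms; in particular every object of $\mathcal{A}_\oplus$, being a coproduct of objects of $\mathcal{A}$, is isomorphic after regrouping summands by isomorphism type to $\bigoplus_{\lambda\in\Lambda}S_\lambda^{(A_\lambda)}$ for some family of index sets $(A_\lambda)_{\lambda\in\Lambda}$, and one computes
\[
\Hom_{\mathcal{A}_\oplus}\bigl(S_\lambda^{(A)},S_\mu^{(B)}\bigr)=
\begin{cases}
0, & \lambda\neq\mu,\\
\Hom_{D_\lambda}\bigl(D_\lambda^{(A)},D_\lambda^{(B)}\bigr), & \lambda=\mu,
\end{cases}
\]
compatibly with composition. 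Hence the functor
\[
F\colon\mathcal{A}_\oplus\longrightarrow\prod_{\lambda\in\Lambda}\mathrm{Mod}(D_\lambda),\qquad
M\longmapsto\bigl(\Hom_{\mathcal{A}_\oplus}(S_\lambda,M)\bigr)_{\lambda\in\Lambda}
\]
is well defined---compactness of $S_\lambda$ guarantees that the $\lambda$-th component is the free module $D_\lambda^{(A_\lambda)}$ rather than a product---and fully faithful by the displayed Hom-computation. It is essentially surjective because $D_\lambda$ is a division ring, so any $D_\lambda$-module $V_\lambda$ is free on a basis $B_\lambda$, and then $(V_\lambda)_\lambda\cong F\bigl(\bigoplus_\lambda S_\lambda^{(B_\lambda)}\bigr)$. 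Thus $\mathcal{A}_\oplus\simeq\prod_{\lambda\in\Lambda}\mathrm{Mod}(D_\lambda)$.

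Finally I would conclude. Each $\mathrm{Mod}(D_\lambda)$ is abelian, and since $D_\lambda$ is a division ring every short exact sequence of $D_\lambda$-modules splits, so $\mathrm{Mod}(D_\lambda)$ is semisimple abelian. A set-indexed product of abelian categories is abelian (kernels and cokernels are formed componentwise), and if every short exact sequence splits in each factor then the componentwise splittings assemble to one in the product; hence $\prod_{\lambda}\mathrm{Mod}(D_\lambda)$, and therefore $\mathcal{A}_\oplus$, is semisimple abelian. The one point requiring care is the Hom-computation inside the coproduct completion: one must check that the cross terms $\Hom_\mathcal{A}(S_\lambda,S_\mu)$ with $\lambda\neq\mu$ really drop out and that the $\lambda$-components assemble into genuine $D_\lambda$-modules rather than products thereof---this is exactly where the standing compactness hypothesis (automatic in the applications, where the objects of $\mathcal{A}$ lie in $\mathcal{K}=\mathcal{T}^c$) is used. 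Everything else is bookkeeping, and since the equivalence $F$ is canonical we never need to choose coproduct decompositions coherently.
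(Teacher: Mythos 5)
Your proposal is correct and follows essentially the same route as the paper's proof: decompose objects of $\mathcal{A}_\oplus$ by isomorphism type of simple object, use Schur's lemma together with compactness of the simples to compute Hom-sets, identify $\mathcal{A}_\oplus$ with the product $\prod_{\lambda}\textsf{Mod}\text{-}D_\lambda$ of module categories over division rings, and conclude that a product of semisimple abelian categories is semisimple abelian. The only difference is cosmetic---you spell out the comparison functor $F$ and verify fully faithfulness and essential surjectivity explicitly, while the paper asserts the equivalence directly from the Hom-computation.
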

\begin{proof}
    Let $X,Y \in \mathcal{A}_\oplus$. By definition, both $X$ and $Y$ are direct sums of objects of $\mathcal{A}$, and so we can write
    \[X \cong \bigoplus_{\lambda \in \Lambda}\bigoplus_{i \in I_\lambda} A^{(\lambda)}_i\]
    where $\Lambda$ is an indexing set for the isomorphism classes $\{A^{(\lambda)}\}$ of the simple objects of $\mathcal{A}$. In other words, we can split $X$ up into arbitrary direct sums of pairwise isomorphic objects. Similarly for $Y$, we may write
    \[Y \cong \bigoplus_{\lambda \in \Lambda}\bigoplus_{j \in J_\lambda}A^{(\lambda)}_j.\]
    Then by definition of the coproduct and compactness we have
    \[\Hom_{\mathcal{A}_\oplus}(X,Y)=\prod_{\lambda_1 \in \Lambda}\prod_{i \in I_{\lambda_1}}\bigoplus_{\lambda_2 \in \Lambda}\bigoplus_{j \in J_{\lambda_2}}\Hom_\mathcal{A}(A^{(\lambda_1)}_i,A^{(\lambda_2)}_j).\]
    We have $\Hom_\mathcal{A}(A,B)=0$ for non-isomorphic simple objects $A,B \in \mathcal{A}$, and for each simple object $A^{(\lambda)}$ the endomorphism ring $\End_\mathcal{A}(A^{(\lambda)})$ is a division ring (every nontrivial endomorphism is an isomorphism by simplicity). Thus, we obtain an equivalence of categories
    \[\mathcal{A}_\oplus \simeq \prod_{\lambda \in \Lambda}\textsf{Mod}-\End_\mathcal{A}(A^{(\lambda)})\]
    between $\mathcal{A}_\oplus$ and the products of the categories of (right) modules over the endomorphism rings of the simple objects $A^{(\lambda)}$ where $\lambda \in \Lambda$ ranges over the isomorphism classes of simple objects. Since the ring $\End_\mathcal{A}(A^{(\lambda)})$ is division, its category of modules is semisimple abelian. It then remains to verify that a product of semisimple abelian categories is once more semisimple abelian: any morphism $f:X \rightarrow Y$ in such a category splits into its components $f_{\lambda}:X_\lambda \rightarrow Y_\lambda$, and each of these decompose as an isomorphism and a zero morphism, so $f$ does too. This concludes the proof.
\end{proof}

\begin{cor}
    Suppose moreover $\mathcal{A}$ is rigid symmetric monoidal with indecomposable $\otimes$-unit $\unit$. For any $x,y \in \K(\mathcal{A}_\oplus)$ we have $x \otimes y \simeq 0$ only if $x \simeq 0$ or $y \simeq 0$, and the only nonzero tt-ideal of $\K(\mathcal{A}_\oplus)$ is the entire category. The same holds for $\K^b(\mathcal{A})$: the spectrum $\Spc(\K^b(\mathcal{A}))=\{(0)\}$ is a singleton.
\end{cor}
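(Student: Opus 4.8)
The plan is to prove both statements---the tensor-vanishing statement and the triviality of nonzero tt-ideals---by exploiting the semisimplicity of $\mathcal{A}_\oplus$ established in \Cref{prop:semisimpleabeliancompletion}, which lets us treat objects of $\K(\mathcal{A}_\oplus)$ as honest chain complexes with split exact sequences everywhere, so that every complex decomposes (up to homotopy equivalence) as a direct sum of its homology, viewed as a complex with zero differentials. First I would record this decomposition: since $\mathcal{A}_\oplus$ is semisimple, every chain complex $x \in \K(\mathcal{A}_\oplus)$ is homotopy equivalent to the complex $H^\bullet(x)$ with zero differentials (the standard argument: a complex over a semisimple abelian category is the direct sum of its cycles and a contractible complex; one uses that kernels and images are summands). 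Hence $x \simeq 0$ in $\K(\mathcal{A}_\oplus)$ iff $H^\bullet(x) = 0$, i.e. iff $x$ is acyclic. Then, by the Künneth-type formula in this setting---tensoring the zero-differential representatives of $x$ and $y$---we get $H^n(x \otimes y) \cong \bigoplus_{p+q=n} H^p(x) \otimes H^q(y)$. Since $\mathcal{A}$ is rigid symmetric monoidal with indecomposable $\otimes$-unit, every nonzero object of $\mathcal{A}_\oplus$ has nonzero tensor square (if $A$ is a nonzero summand of some object, then $\unit$ is a summand of $A \otimes A^\vee$ by the rigidity/duality argument already used in the proof of \Cref{theorem:spectraofDAM/DATM}, so $A \otimes A^\vee \neq 0$; more directly, $A \otimes B = 0$ with $A \neq 0$ forces $B = 0$ after tensoring with $A^\vee$). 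Therefore if $x \not\simeq 0$ and $y \not\simeq 0$, picking degrees $p, q$ with $H^p(x) \neq 0 \neq H^q(y)$, the top nonzero tensor $H^p(x) \otimes H^q(y)$ is a nonzero summand of $H^{p+q}(x \otimes y)$, so $x \otimes y \not\simeq 0$.

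Next I would deduce the statement about tt-ideals. Let $\mathcal{I} \subseteq \K(\mathcal{A}_\oplus)$ be a nonzero tt-ideal and pick $0 \neq x \in \mathcal{I}$; choose $n$ with $H^n(x) \neq 0$ and let $A$ be a nonzero simple summand of $H^n(x)$. Shifting and using that $\mathcal{I}$ is thick and closed under summands, $A[-n] \in \mathcal{I}$ implies $A \in \mathcal{I}$ (as a summand of the shift of a summand of $x$---one extracts $H^n(x)$ as a direct summand of $x$ in $\K(\mathcal{A}_\oplus)$ using the zero-differential representative, then extracts the simple summand $A$). Now $A \otimes A^\vee \in \mathcal{I}$ contains $\unit$ as a summand, so $\unit \in \mathcal{I}$, whence $\mathcal{I} = \K(\mathcal{A}_\oplus)$. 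The same argument applies to $\K^b(\mathcal{A})$ once one notes the bounded homotopy category of a semisimple abelian category has the same structural features (every bounded complex splits as the sum of its homology with zero differentials), and $\mathcal{A} \subseteq \mathcal{A}_\oplus$ is closed under the relevant operations; alternatively one observes $\K^b(\mathcal{A})$ embeds as a full tt-subcategory of $\K(\mathcal{A}_\oplus)$ and that the tt-ideal argument is internal. Finally, the singleton spectrum $\Spc(\K^b(\mathcal{A})) = \{(0)\}$ follows: $(0)$ is a prime tt-ideal precisely by the tensor-vanishing statement (no zero divisors), and it is the unique tt-ideal other than the whole category, hence the unique prime.

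The main obstacle---really the only thing requiring care---is justifying that over a semisimple abelian category closed under arbitrary coproducts, an arbitrary (possibly unbounded) chain complex is homotopy equivalent to its homology with zero differentials, and that this decomposition is compatible with the tensor product so that the Künneth splitting holds. For the bounded case over $\mathcal{A}$ this is classical, but for unbounded complexes over $\mathcal{A}_\oplus$ one should check the contraction of the acyclic part can be built degreewise without convergence issues; this works because at each degree the short exact sequences $0 \to Z^n \to C^n \to B^{n+1} \to 0$ split in a semisimple category, yielding $C^\bullet \cong H^\bullet \oplus (\bigoplus_n \operatorname{cone}(\id_{B^{n+1}})[-n])$ as complexes, and the second summand is contractible. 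Once this structural fact is in hand, everything else is a short formal consequence of rigidity and indecomposability of $\unit$, mirroring arguments already deployed earlier in the paper.
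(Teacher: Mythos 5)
Your proof is correct and takes essentially the same route as the paper: both arguments reduce complexes to their homology with zero differentials via semisimplicity of $\mathcal{A}_\oplus$, use rigidity and indecomposability of $\unit$ to conclude there are no $\otimes$-zero divisors, and then observe that any nonzero tt-ideal contains a nonzero object of the heart, whence (via $A \otimes A^\vee \supseteq \unit$) the whole category. You are somewhat more explicit than the paper about the degreewise splitting for unbounded complexes and about extracting a simple summand in $\mathcal{A}$ so that duality genuinely applies, both of which are points the paper elides; this is a helpful bit of extra care rather than a different method.
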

\begin{proof}
    Observe that for any (compact) $a \in \mathcal{A}$, its dual exists and $a \otimes a^\vee$ therefore contains the unit $\unit$ as a direct summand (by adjunction and semisimplicity). We note $\mathcal{A}_\oplus$ has no $\otimes$-zero divisors, since $\mathcal{A}$ has none: for any nonzero $x,y \in \mathcal{A}$, $x \otimes y$ cannot be zero since the previous comment informs us we have a split monomorphism
    \[\unit \simeq \unit \otimes \unit \hookrightarrow x \otimes y \otimes x^\vee \otimes y^\vee.\]
    The preceding proposition implies the total cohomology functor is conservative (any complex can be represented by one with zero differentials). Thus, $x \otimes y \simeq 0$ implies $x=0$ or $y=0$. Since the category $\K(\mathcal{A}_\oplus)$ is itself semisimple abelian, any nontrivial tt-ideal contains some compact $a \in \K(\mathcal{A}_\oplus)$ which we've seen generates everything. The final assertion regarding $\K^b(\mathcal{A})$ can be seen via the same argument.
\end{proof}

\begin{prop}\label{prop:existenceoft}
    Assume our categories $\mathcal{K},\mathcal{T}$ have stable $\infty$-categorical enhancements. Then there exists a strong weight complex functor
    \[t:\mathcal{T} \rightarrow \K(\mathcal{A}_\oplus).\] Moreover, it is symmetric monoidal.
\end{prop}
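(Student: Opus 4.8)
The plan is to obtain the existence of $t$ essentially for free from Sosnilo's machinery and to spend the real effort only on the monoidal refinement. For the existence: by the preceding proposition the weight structure $w$ on $\mathcal{K}$ extends to a weight structure $w'$ on $\mathcal{T}$ with heart $\mathcal{A}_\oplus$, and since $w$ is bounded and $\mathcal{T}$ is generated under colimits by $\mathcal{K}$, the pair $(\mathcal{T},w')$ together with the assumed stable $\infty$-categorical enhancement is in the scope of the strong weight complex functor construction of \cite{Sosnilo:WeightStructures}; this yields an exact (hence triangulated on homotopy categories) functor $t:\mathcal{T}\to\K(\mathcal{A}_\oplus)$ refining the weak weight complex functor $t_\mathfrak{w}$. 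I would also record that $t$ preserves coproducts, as $\mathcal{T}^{w'\leq n}$ does and $t$ is assembled from the weight towers; this will be needed below.

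For the symmetric monoidal statement, the first step is to verify that $w'$ is a \emph{monoidal} weight structure: $\unit\in\mathcal{T}^\heartsuit$, $\mathcal{T}^{w'\leq 0}\otimes\mathcal{T}^{w'\leq 0}\subseteq\mathcal{T}^{w'\leq 0}$ and $\mathcal{T}^{w'\geq 0}\otimes\mathcal{T}^{w'\geq 0}\subseteq\mathcal{T}^{w'\geq 0}$. The corresponding statement for $\mathcal{K}$ is the assumed compatibility of $w$ with $\otimes$; it propagates to $\mathcal{T}$ because $-\otimes x$ preserves coproducts, extensions and retracts for every $x$, hence preserves the closure operations that cut $\mathcal{T}^{w'\leq 0}$ out of $\mathcal{K}^{\leq 0}$, and then the non-negative part follows from $\mathcal{T}^{w'\geq 0}={}^{\perp}(\mathcal{T}^{w'\leq -1})$ and rigidity. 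Consequently $\mathcal{A}_\oplus=\mathcal{T}^\heartsuit$ is a symmetric monoidal full subcategory of $\mathcal{T}$, the homotopy category $\K(\mathcal{A}_\oplus)$ acquires the total--tensor--product symmetric monoidal structure (using that $\otimes$ on $\mathcal{A}_\oplus$ preserves coproducts, $\mathcal{A}$ being rigid, so that $(C\otimes D)_n=\bigoplus_{i+j=n}C_i\otimes D_j$ makes sense), and the composite $\mathcal{A}_\oplus\hookrightarrow\mathcal{T}\xrightarrow{\,t\,}\K(\mathcal{A}_\oplus)$ is naturally equivalent to the strong symmetric monoidal inclusion of complexes concentrated in degree $0$.

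The crux is to promote $t$ itself to a lax, then strong, symmetric monoidal functor. My preferred route is to perform the weight-complex construction monoidally: the filtered category $\operatorname{Fun}(\Z,\mathcal{T})$ carries the Day-convolution symmetric monoidal structure, inserting weight filtrations is --- \emph{coherently} --- lax symmetric monoidal, and passing to the associated graded $\operatorname{Fun}(\Z,\mathcal{T})\to\K(\mathcal{A}_\oplus)$ is symmetric monoidal, so the composite supplies structure maps $t(x)\otimes t(y)\to t(x\otimes y)$; compatibly with the equivalence of the previous paragraph these restrict on the heart to the identity lax structure of the inclusion. (An alternative is to extract a symmetric monoidal refinement from a universal property of $t$, using that a weight-exact localization of a symmetric monoidal stable $\infty$-category along a $\otimes$-compatible class of maps is again symmetric monoidal.) Finally I would upgrade lax to strong by a soft generation argument: for fixed $y$, the full subcategory of $x\in\mathcal{T}$ for which $t(x)\otimes t(y)\to t(x\otimes y)$ is an equivalence is closed under shifts, cofibers, retracts and coproducts (all preserved by $t$ and by the tensor, and equivalences are closed under these) and contains $\mathcal{A}_\oplus$ by the preceding step; since $\mathcal{A}$ generates $\mathcal{K}$ under finitely many shifts, cofibers and retracts ($w$ being bounded) and $\mathcal{K}$ generates $\mathcal{T}$ under coproducts, this subcategory must be all of $\mathcal{T}$, and symmetry in $y$ concludes.

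The hard part will be precisely the coherence in the third paragraph: making \emph{every} choice entering the weight-complex construction (the weight decompositions and their connecting maps) simultaneously functorial for $\otimes$ up to coherent higher homotopy --- i.e.\ genuinely producing the lax $E_\infty$-structure --- rather than merely a monoidal functor at the level of homotopy categories, which would not suffice to feed the later arguments. Granting that input, whether via a monoidal enhancement of \cite{Sosnilo:WeightStructures} or the filtered-category model above, the passage from lax to strong and everything else is formal. The proposition then applies in particular to $\mathcal{T}=\DM(k/k;\F_p)$ and to $\DTM(k/k;\F_p)$, $\DAM(k/k;\F_p)$, $\DATM(k/k;\F_p)$, whose compact parts have semisimple abelian heart for the isotropic Chow weight structure by \Cref{thmref:heartofgeometricisotropicmotives} and which admit the requisite enhancements.
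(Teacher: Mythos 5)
The gap you flag in your third paragraph --- producing the coherent lax $E_\infty$-structure on the weight complex functor --- is real, and it is precisely what keeps your argument from being a proof: you explicitly write ``Granting that input\ldots'' but the whole content of the proposition is that input. Everything else you lay out (the monoidal weight structure $w'$, $\mathcal{A}_\oplus$ as a monoidal heart, the lax-to-strong upgrade by generation) is reasonable scaffolding, but it does not touch the coherence issue.

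The paper closes that gap by a citation you did not use: \cite{Aoki:WeightFunctorIsTensor} establishes that the strong weight complex functor at the level of compacts, $t:\mathcal{K}\to\K^b(\mathcal{A})$, is already a symmetric monoidal functor of stable $\infty$-categories. With that in hand, the paper's proof is two sentences: existence of the big $t:\mathcal{T}\to\K(\mathcal{A}_\oplus)$ is \cite[Rmk.~3.6]{Sosnilo:WeightStructures} via Ind-completion (a point your existence paragraph essentially also makes), and since Ind-completion carries a symmetric monoidal exact functor to a symmetric monoidal one on the Ind-categories, the big $t$ inherits the monoidal structure for free. Once you appeal to Aoki, your Day-convolution filtered model and your lax-to-strong generation argument become unnecessary --- the hard work has been outsourced. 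In short, your instinct about what is hard was correct; the missing step is that Aoki's theorem already supplies it, and then the Ind-completion you were already using for existence also handles the monoidal refinement.
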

\begin{proof}
    The existence follows by \cite[Rmk.~3.6]{Sosnilo:WeightStructures}, since our source category $\mathcal{T}$ is compactly generated by $\mathcal{K}$ by assumption. There, we see that the global weight complex functor is defined by passing to Ind-completions (working with the stable $\infty$-categorical enhancements). Since the weight complex functor between the compact subcategories is symmetric monoidal by \cite{Aoki:WeightFunctorIsTensor}, we immediately deduce the same result for the big categories by taking Ind-completions.
\end{proof}

Let us consider the specific case of isotropic motives for a moment. All of the above applies to the category of isotropic motives $\DM_{gm}(k/k;\F_p) \subseteq \DM(k/k;\F_p)$ to get that the big category has a weight structure whose heart is the semisimple abelian category $\Chow_{Num,\oplus}(k;\F_p)$ whose objects are coproducts of (compact) isotropic (which coincides with numerical) Chow motives with $\F_p$-coefficients. In the bounded case, the weight complex functor is conservative. For the unbounded version, we have the following result (cf. \cite[Rmk.~2.3.5(3)]{Bondarko:KillingWeights}, \cite[Lemma~2.4]{Ayoub:Conjectures}).

\begin{prop}\label{prop:tisnotconservative}
    Restrict to the case $p=2$. The weight complex functor
    \[\begin{tikzcd}
        t:\DM(k/k;\F_2) \arrow[r] & \K(\Chow_{Num,\oplus}(k;\mathbb{F}_2))
    \end{tikzcd}\]
    is not conservative.
\end{prop}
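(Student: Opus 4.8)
The plan is to produce a single nonzero object $X\in\DM(k/k;\F_2)$ with $t(X)\simeq 0$. The first point to notice is that such an $X$ cannot be compact: on $\DM_{gm}(k/k;\F_2)$ the isotropic Chow weight structure is bounded with semisimple heart, so every weight complex may be chosen to have zero differentials, and hence $t$ restricted to compact objects is conservative. Thus $X$ must be built as a genuine homotopy colimit, and I would build it out of the unit using the negative-degree self-maps of $\unit$ supplied by \Cref{thmref:isotropicmotiviccohofpoint}: the exterior generators $r_i\in\Hom(\unit,\unit(1-2^i)[1-2^{i+1}])$ of $\End(\unit)=\Lambda_{\F_2}(r_0,r_1,\dots)$.

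Explicitly, set $X_0:=\unit$ and inductively $X_{n+1}:=X_n\otimes\unit(1-2^n)[1-2^{n+1}]$, with transition map $f_n:=\id_{X_n}\otimes r_n\colon X_n\to X_{n+1}$, and put $X:=\hocolim_n X_n$. I would then verify two things. First, $X\neq 0$: since $\unit$ is compact, $\Hom(\unit,X)=\colim_n\Hom(\unit,X_n)$, and the class $\id_\unit\in\Hom(\unit,X_0)$ is carried at the $n$-th stage to the composite $r_0r_1\cdots r_{n-1}$, a product of distinct generators of the exterior algebra and therefore nonzero; so $\id_\unit$ survives to the colimit, giving $\Hom(\unit,X)\neq 0$ and hence $X\neq 0$. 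Second, $t(X)\simeq 0$: by \Cref{prop:existenceoft} the functor $t$ arises by Ind-completion from the weight complex functor on compacts, so it preserves coproducts and a fortiori sequential homotopy colimits, whence $t(X)\simeq\hocolim_n t(X_n)$; but each $X_n$ is a single twisted and shifted object of the heart, so $t(X_n)$ is a one-term complex concentrated in a single cohomological degree determined by its weight, and $f_n=\id\otimes r_n$ raises weight by exactly $1$ (as $2(1-2^n)-(1-2^{n+1})=1$), so $t(X_n)$ and $t(X_{n+1})$ sit in distinct degrees and the chain map $t(f_n)$ is forced to vanish. A homotopy colimit of a sequence all of whose transition maps are zero is the cofibre of $\id\colon\bigoplus_n t(X_n)\to\bigoplus_n t(X_n)$, hence is $0$. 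Combining the two points, $X$ is a nonzero object of $\ker t$, so $t$ is not conservative.

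The main obstacle is ensuring $\hocolim_n X_n\neq 0$, and this is precisely why one is forced to use infinitely many \emph{distinct} generators $r_i$: every positive-degree element of $\End(\unit)=\Lambda_{\F_2}(r_0,r_1,\dots)$ squares to zero, so a telescope along a single self-map of $\unit$ — or, more generally, any telescope in which the two-fold composites vanish — would have zero homotopy colimit; spreading the construction over all the $r_i$ keeps the iterated composites $r_0r_1\cdots r_{n-1}$ nonzero in the exterior algebra. Everything else — that $t$ commutes with this homotopy colimit, and that each $t(f_n)$ vanishes on degree grounds — is formal, given \Cref{prop:existenceoft} and \Cref{thmref:isotropicmotiviccohofpoint}.
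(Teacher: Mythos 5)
Your proof is correct and is essentially the paper's own argument: the object you construct, $X=\hocolim_n X_n$ with transition maps $\id\otimes r_n$, is precisely the paper's $\Omega=\hocolim(T\xrightarrow{r_0}T[-1]\xrightarrow{r_1[-1]}\cdots)$, you establish $X\neq 0$ the same way (compactness of $\unit$ plus nonvanishing of the products $r_0r_1\cdots r_{n-1}$ in the exterior algebra), and you establish $t(X)\simeq 0$ the same way (each $t(X_n)$ is a one-term complex in a distinct degree, so the transition maps vanish and the telescope collapses). Your write-up is if anything slightly more explicit about why $t$ commutes with the homotopy colimit and why a telescope along zero maps vanishes, but the route is identical.
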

\begin{proof}
    Consider the isotropic motive
    \[\Omega \coloneqq \hocolim(T \overset{r_0}{\longrightarrow} T[-1] \overset{r_1[-1]}{\longrightarrow} \cdots) \in \DM(k/k;\F_2)\]
    given by the homotopy colimit of the duals of the Milnor operations
    \[r_i \in H_{iso}^{-2^{i+1}+1,-2^i+1}(\Spec(k);\F_2).\]
    Homotopy colimits turn into direct colimits under the inspection of compact objects (\cite[Lemma~2.8]{Neeman:BrownRep}), so $\Omega$ is nonzero since
    \[\Hom(T,\Omega) \simeq \colim_i \Hom(T,T(-2^{i+1}+i+2)[-2^{i+2}+i+3])\]
    is nonzero, for the product of any finite collection of the $r_i$ is nonzero (\Cref{thmref:isotropicmotiviccohofpoint}: the cohomology algebra is the exterior algebra generated by the $r_i$'s). However, it is evident that $t(\Omega)$ is zero:
    \[t(\Omega)=\hocolim_i t(T(-2^{i+1}+i+2)[-2^{i+2}+i+3])=0\]
    since $t(T(-2^{i+1}+i+2)[-2^{i+2}+i+3])$ is the pure motive $T\{-2^{i+1}+i+2\}$ concentrated in degree $i+1$, and there are evidently no morphisms in $\K(\Chow_{Num,\oplus}(k;\mathbb{F}_2))$ between two successive such complexes. So, $\Omega \in \ker(t) \setminus \{0\}$.
\end{proof}
\begin{remark}
    We believe that the above should also hold for odd $p$. Indeed, all we require is a sequence of morphisms that don't compose to give the zero morphism yet each vanish under the weight complex functor $t$.
\end{remark}
\begin{remark}\label{remark:tisnotconservativeforTates}
    The isotropic motive $\Omega$ also lives in (big) subcategories of isotropic (Artin--)Tate motives.
\end{remark}

Recall from \cite{Balmer:Ruminations} that a \textbf{tt-field} is a big tt-category $\mathcal{F}$ such that every object $x \in \mathcal{F}$ can be written as a coproduct of compact-rigid objects $x_i \in \mathcal{F}^c$ and the endofunctor $x \otimes -:\mathcal{F} \rightarrow \mathcal{F}$ is faithful (if $x \neq 0$).

\begin{lemma}\label{lemma:homotopycategoryisattfield}
    Let $\mathcal{A}$ be a rigid symmetric monoidal semisimple abelian category with no $\otimes$-zero divisors. Then the category $\K(\mathcal{A}_\oplus)$ is a tt-field.
\end{lemma}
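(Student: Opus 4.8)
The plan is to check the two defining properties of a tt-field for $\K(\mathcal{A}_\oplus)$ directly, exploiting that $\mathcal{A}_\oplus$ is semisimple abelian (\Cref{prop:semisimpleabeliancompletion}), so that complexes over it are as rigid as possible. The technical backbone is a \emph{normal form}: every object of $\K(\mathcal{A}_\oplus)$ is homotopy equivalent to a complex with vanishing differentials. This is the standard splitting argument — for a complex $C^\bullet$ over the semisimple category $\mathcal{A}_\oplus$, the short exact sequences $0 \to B^n \to Z^n \to H^n \to 0$ and $0 \to Z^n \to C^n \to B^{n+1} \to 0$ split, so in the category of complexes $C^\bullet$ decomposes as $\bigoplus_n H^n(C^\bullet)[-n]$ together with a coproduct of contractible two-term complexes $B^{n+1}[-n] \xrightarrow{\id} B^{n+1}[-n-1]$, the latter being null-homotopic. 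Combining this with the equivalence $\mathcal{A}_\oplus \simeq \prod_\lambda \textsf{Mod}\text{-}\End_\mathcal{A}(A^{(\lambda)})$ from the proof of \Cref{prop:semisimpleabeliancompletion} (modules over a division ring are free, hence coproducts of copies of the simple module), one concludes that every $x \in \K(\mathcal{A}_\oplus)$ is isomorphic to a coproduct $\bigoplus_{n,\alpha} S_{n,\alpha}[-n]$ of shifts of simple objects of $\mathcal{A}$, each of which is compact in $\mathcal{A}_\oplus$.

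For the first axiom I then verify each $S[-n]$, with $S$ simple in $\mathcal{A}$, is compact-rigid in $\K(\mathcal{A}_\oplus)$. Rigidity: its dual is $S^\vee[n]$, where $S^\vee$ is the dual of $S$ in the rigid category $\mathcal{A}$, with unit and counit induced from those of $S$ via the monoidal embedding of $\mathcal{A}$ into $\K(\mathcal{A}_\oplus)$ as complexes concentrated in degree $0$; the triangle identities descend from $\mathcal{A}$. Compactness: $S$ is projective in $\mathcal{A}_\oplus$, which gives $\Hom_{\K(\mathcal{A}_\oplus)}(S[-n], Y^\bullet) \cong \Hom_{\mathcal{A}_\oplus}(S, H^n(Y^\bullet))$, and both $H^n(-)$ and $\Hom_{\mathcal{A}_\oplus}(S,-)$ (the latter by compactness of $S$ in $\mathcal{A}_\oplus$) commute with coproducts. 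Hence every object of $\K(\mathcal{A}_\oplus)$ is a coproduct of compact-rigid objects.

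For the second axiom I must show $x \otimes (-)$ is faithful whenever $x \neq 0$; by additivity it suffices to prove $x \otimes f = 0$ implies $f = 0$ for $f \colon y \to z$. Put $y$ and $z$ in normal form, so that $f$ is a degree-preserving family $(f_n \colon Y_n \to Z_n)_n$; if $f \neq 0$ then some $f_n \neq 0$, and writing $Y_n$ and $Z_n$ as coproducts of simples, $f_n$ has a nonzero component $S \hookrightarrow Y_n \xrightarrow{f_n} Z_n \twoheadrightarrow S'$, which is an isomorphism $S \xrightarrow{\sim} S'$ by simplicity. Tensoring with $x$ and pre/post-composing with the corresponding summand inclusion and projection exhibits an isomorphism of $x \otimes S[-n]$ as a direct factor of $x \otimes f$, so $x \otimes f \neq 0$ provided $x \otimes S \neq 0$. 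Finally, $x \neq 0$ means, again in normal form, that $x$ has a nonzero simple summand $U[-m]$, and then $x \otimes S$ contains $(U \otimes S)[-m-n]$ as a summand, which is nonzero because $\mathcal{A}$ has no $\otimes$-zero divisors. This establishes faithfulness, and with it the lemma.

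I expect the only genuinely delicate point to be the bookkeeping in the last step: one must ensure that passing to zero-differential representatives, extracting the nonzero simple component of $f_n$, and extracting a simple summand of $x$ are all compatible with the additive functor $x \otimes (-)$ and with the split decompositions involved — which they are, precisely because tensoring is additive and preserves split summands, so the whole argument collapses onto the single hypothesis that $\mathcal{A}$ has no $\otimes$-zero divisors. Everything else (the normal form, and the compactness and rigidity of $S[-n]$) is routine once \Cref{prop:semisimpleabeliancompletion} is available.
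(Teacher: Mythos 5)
Your proof is correct and follows essentially the same route as the paper: normal form via semisimplicity, the first axiom from the observation that zero-differential complexes split as coproducts of shifts of compact-rigids in $\mathcal{A}$, and the second axiom by reducing $x\otimes f=0$ to a tensor of nonzero objects of $\mathcal{A}$ being nonzero. The paper phrases the second step a little more structurally, splitting $f$ globally as $\id_w\oplus 0$ (for $y=w\oplus y'$, $z=w\oplus z'$) and then applying the no-$\otimes$-zero-divisor hypothesis to $x\otimes w$, whereas you extract a single nonzero simple component of some $f_n$ and a single simple summand of $x$; these are the same idea at different granularities, and your added detail on compactness and rigidity of $S[-n]$ just spells out what the paper dismisses as ``clear from semisimplicity.''
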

\begin{proof}
    The first condition is clear from semisimplicity. For the second condition, suppose $x \in \K(\mathcal{A}_\oplus)$ is nonzero and that $x \otimes f=0$ for some morphism $f:y \rightarrow z$. By semisimplicity, $f$ decomposes as an isomorphism and a zero morphism: write $y=w \oplus y'$, $z=w \oplus z'$ so that $f:y \rightarrow z$ is identified with the morphism $\id_w \oplus 0$. Then $x \otimes f=0$ implies $x \otimes w \simeq 0$. But there are no $\otimes$-zero divisors in our category (as there are none in $\mathcal{A}$), so as $x$ is nonzero, we have $w \simeq 0$, i.e. $f=0$.
\end{proof}

\begin{prop}\label{prop:homologicalprimeabove0}
    Let $\widehat{t}:\textsf{Mod}-\mathcal{K} \rightarrow \textsf{Mod}-\K^b(\mathcal{A})$ be the map induced between the module categories from the weight complex functor $t$. Then $\mathcal{B}=\ker(\widehat{t})^{fp} \in \Spc^h(\mathcal{K})$ is a homological prime in the fibre over $(0) \in \Spc(\mathcal{K})$ under the continuous surjection
    \[\begin{tikzcd}
    \phi: \Spc^h(\mathcal{K}) \arrow[r, twoheadrightarrow] & \Spc(\mathcal{K})
    \end{tikzcd}\]
    of \cite{Balmer:ResidueFields}.
\end{prop}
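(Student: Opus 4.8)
The proposition bundles two claims about $\mathcal{B}=\ker(\widehat{t})^{fp}$: that it is a maximal Serre $\otimes$-ideal of $\textsf{mod}-\mathcal{K}$ (hence a point of $\Spc^h(\mathcal{K})$), and that $\phi(\mathcal{B})=h^{-1}(\mathcal{B})=(0)$. The engine is \Cref{lemma:homotopycategoryisattfield}: the strong weight complex functor $t\colon\mathcal{T}\to\K(\mathcal{A}_\oplus)$ of \Cref{prop:existenceoft} is a coproduct-preserving symmetric monoidal functor into a \emph{tt-field}; it preserves compact objects, restricting to $t\colon\mathcal{K}\to\K^b(\mathcal{A})$, which is conservative because $\mathcal{A}$ is semisimple (every object of $\K^b(\mathcal{A})$ is the sum of shifts of its homology). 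Morally, what is being asserted is just that a coproduct-preserving tt-functor to a tt-field detects a homological prime, sitting over the preimage of the zero ideal.

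The location over $(0)$ is the easy half. We have $\phi(\mathcal{B})=\{a\in\mathcal{K}:\widehat a\in\mathcal{B}\}$; since $\widehat t$ sends $\widehat a$ to $\widehat{t(a)}$, the condition $\widehat a\in\mathcal{B}$ says $\widehat{t(a)}=0$ in $\textsf{mod}-\K^b(\mathcal{A})$, which by the Yoneda lemma means $t(a)=0$, which by conservativity of $t$ means $a=0$. Hence $\phi(\mathcal{B})=(0)$ — in particular $(0)$ is a prime of $\mathcal{K}$, so the statement is not vacuous.

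For maximality, first note that $\widehat t\colon\textsf{mod}-\mathcal{K}\to\textsf{mod}-\K^b(\mathcal{A})$ is exact: it is the essentially unique exact functor through which the homological functor $\mathcal{K}\xrightarrow{t}\K^b(\mathcal{A})\hookrightarrow\textsf{mod}-\K^b(\mathcal{A})$ factors, by the $2$-universal property of $h\colon\mathcal{K}\to\textsf{mod}-\mathcal{K}$, and it is symmetric monoidal since $t$ is and Day convolution is universal among colimit-preserving monoidal structures making $h$ monoidal. Thus $\mathcal{B}=\ker(\widehat t)$ is a Serre $\otimes$-ideal, proper because $\widehat t(\unit)=\unit\neq 0$. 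Next, the target $\textsf{mod}-\K^b(\mathcal{A})$ is \emph{field-like}: as $\mathcal{A}$ is semisimple, kernels and cokernels of maps of representables are again representable, so the Yoneda functor $\K^b(\mathcal{A})\to\textsf{mod}-\K^b(\mathcal{A})$ is an equivalence and the target is semisimple; any nonzero Serre $\otimes$-ideal then contains some $\widehat S$ with $S\in\mathcal{A}$ simple, hence contains $\widehat S\otimes\widehat S^\vee$, which has the (simple) unit as a summand by rigidity, hence is everything. Combining these, maximality reduces to the claim that $\widehat t$ is a \emph{Serre quotient} functor, i.e. that it induces an equivalence $\textsf{mod}-\mathcal{K}/\mathcal{B}\xrightarrow{\sim}\textsf{mod}-\K^b(\mathcal{A})$: granting this, Serre $\otimes$-ideals of $\textsf{mod}-\mathcal{K}$ containing $\mathcal{B}$ correspond to those of a field-like category, of which there are only two. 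Alternatively, this whole package is Balmer's identification of the homological residue field attached to a tt-field \cite{Balmer:ResidueFields}, applied to $t$.

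The Serre-quotient claim is the step I expect to need genuine care. Since $t$ is essentially surjective ($t$ is the inclusion on the heart $\mathcal{A}$, and $\K^b(\mathcal{A})$ is additively generated by shifts of $\mathcal{A}$) and $\widehat t$ is exact, the induced functor $\textsf{mod}-\mathcal{K}/\mathcal{B}\to\textsf{mod}-\K^b(\mathcal{A})$ is automatically faithful, exact and essentially surjective; what remains is \emph{fullness}. This cannot come from naively lifting morphisms of representables along $t$, since $t$ is very far from full (compare its failure of conservativity on the big category, \Cref{prop:tisnotconservative}) — it has to be wrung out of the extreme rigidity of $\K^b(\mathcal{A})$, whose $\Hom$-groups are about as small as they can be, so that any morphism $t(x)\to t(y)$ is a matrix of identity components and hence already realised in $\mathcal{K}$ (modulo the kernel $\mathcal{B}$). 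Everything else — Serreness of $\mathcal{B}$, field-likeness of the target, and the computation of the fibre — is routine given the conservativity of the bounded weight complex functor recorded after \Cref{prop:existenceoft}.
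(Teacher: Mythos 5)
Your computation that $\phi(\mathcal{B})=(0)$ is correct and is essentially the only direct verification needed: $\widehat{a}\in\ker(\widehat t)^{fp}$ forces $\widehat{t(a)}=0$, hence $t(a)=0$ by Yoneda, hence $a=0$ by conservativity of the bounded weight complex functor (which holds since any complex over a semisimple heart is determined by its homology). Your identification of $\K^b(\mathcal{A})$, equivalently $\textsf{mod}-\K^b(\mathcal{A})$, as ``field-like'' also matches the paper's use of \Cref{lemma:homotopycategoryisattfield} and \cite[Thm.~5.17]{Balmer:Ruminations}: $\K(\mathcal{A}_\oplus)$ is a tt-field, so $(0)$ is the unique homological prime of its compact part.

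The gap is exactly where you flag it: the maximality of $\mathcal{B}$. You reduce it to the assertion that $\widehat t$ induces an equivalence $\textsf{mod}-\mathcal{K}/\mathcal{B}\xrightarrow{\sim}\textsf{mod}-\K^b(\mathcal{A})$, and you acknowledge that fullness is the non-trivial ingredient, but the justification offered (``any morphism $t(x)\to t(y)$ is a matrix of identity components and hence already realised in $\mathcal{K}$'') is a heuristic rather than an argument, and as stated it is not correct: the weight complex functor is far from full in general, and a degree-zero morphism of graded objects $t(x)\to t(y)$ need not be induced by a morphism $x\to y$ in $\mathcal{K}$, even after passing to subobjects and quotients as in a Gabriel quotient. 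Whether $\widehat t$ literally realises the Serre quotient is a stronger assertion than the proposition, and you do not establish it. The paper sidesteps this entirely: rather than proving maximality by hand, it invokes \cite[Thm.~5.10]{Balmer:BigSupport}, which is precisely the statement that pulling back a homological prime of the target along a coproduct-preserving geometric tt-functor produces a homological prime of the source (sitting in the fibre over the corresponding pullback of Balmer primes). With that reference in hand, the entire proposition follows from the two ingredients you already have — the tt-field identification of the target and the conservativity of $t$ on compacts — and no Serre quotient claim is needed. Your closing ``alternatively, this is in \cite{Balmer:ResidueFields}'' points at the wrong source: that paper sets up the comparison map $\phi$ and homological residue fields, but the pullback-of-homological-primes machinery you need lives in \cite{Balmer:BigSupport}.
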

\begin{proof}
    By \cite[Thm.~5.17]{Balmer:Ruminations} and \Cref{lemma:homotopycategoryisattfield}, $(0)$ is the unique point of $\Spc^h(\K^b(\mathcal{A}))$. Now we apply \cite[Thm.~5.10]{Balmer:BigSupport} to obtain the result.
\end{proof}

\begin{prop}\label{prop:NoSiffnilpotency}
    Suppose $\mathcal{K}$ has a compatible bounded weight structure with a semisimple abelian heart. Assume further that $\mathcal{K}$ has a stable $\infty$-categorical enhancement, so that the weight complex functor
    \[t:\mathcal{K} \rightarrow \K^b(\mathcal{A})\]
    exists and is tensor (\cite{Sosnilo:WeightStructures}, \cite{Aoki:WeightFunctorIsTensor}). Then $\Spc^h(\mathcal{K})$ is a singleton iff $t$ detects $\otimes$-nilpotence, i.e. $t(f)=0$ implies $f$ is $\otimes$-nilpotent. 
\end{prop}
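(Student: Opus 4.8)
The plan is to pivot everything on the homological prime $\mathcal{B}_0 \coloneqq \ker(\widehat{t})^{fp}$ supplied by \Cref{prop:homologicalprimeabove0}, together with Balmer's homological nilpotence theorem (available since $\mathcal{K}$ is rigid): a morphism $f$ in $\mathcal{K}$ is $\otimes$-nilpotent if and only if $\widehat{f}$ becomes zero in the homological residue field attached to every $\mathcal{B} \in \Spc^h(\mathcal{K})$ (see \cite{Balmer:ResidueFields}). So ``$\Spc^h(\mathcal{K})$ is a singleton'' amounts to ``$\mathcal{B}_0$ is the unique homological prime'', and in that case $\otimes$-nilpotence is detected by $\mathcal{B}_0$ alone; the whole proof is about turning that last phrase into a statement about $t$.

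The technical heart I would establish first is a lemma identifying $\mathcal{B}_0$ with the weight complex functor at the level of morphisms: for every morphism $f \colon a \to b$ in $\mathcal{K}$,
\[
\widehat{f} \text{ becomes zero in the homological residue field of } \mathcal{B}_0 \iff t(f) = 0 .
\]
To prove it I would unwind the left-hand side to ``$\image(\widehat{f}) \in \mathcal{B}_0$'', i.e. ``$\widehat{t}(\image \widehat{f}) = 0$'', by definition of $\mathcal{B}_0$. Extending $f$ to a distinguished triangle $c \xrightarrow{e} a \xrightarrow{f} b \to c[1]$ and using that $\widehat{(-)}$ is homological gives $\image(\widehat{f}) = \coker(\widehat{e})$; since the induced functor $\widehat{t}$ on module categories is right exact with $\widehat{t}(\widehat{e}) = \widehat{t(e)}$, we get $\widehat{t}(\image \widehat{f}) = \coker(\widehat{t(e)})$. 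If $t(f) = 0$ then, because $t$ is a genuinely \emph{triangulated} (strong) weight complex functor, applying $t$ to the triangle produces a distinguished triangle whose middle morphism $t(f)$ vanishes, which forces $t(e)$ to be a split epimorphism; hence $\coker(\widehat{t(e)}) = 0$ and $\image(\widehat{f}) \in \mathcal{B}_0$. For the converse, applying $\widehat{t}$ to the canonical factorisation $\widehat{a} \twoheadrightarrow \image(\widehat{f}) \hookrightarrow \widehat{b}$ of $\widehat{f}$ exhibits $\widehat{t(f)}$ as factoring through $\widehat{t}(\image \widehat{f})$, so if the latter vanishes then $\widehat{t(f)} = 0$, and since the Yoneda embedding of $\K^b(\mathcal{A})$ into its module category is faithful this yields $t(f) = 0$.

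Granting the lemma, both implications are quick. If $\Spc^h(\mathcal{K})$ is a singleton it equals $\{\mathcal{B}_0\}$, so whenever $t(f) = 0$ the lemma shows $\widehat{f}$ dies in the unique homological residue field, whence $f$ is $\otimes$-nilpotent by the nilpotence theorem; i.e. $t$ detects $\otimes$-nilpotence. Conversely, assuming $t$ detects $\otimes$-nilpotence, I would show every homological prime $\mathcal{B}$ equals $\mathcal{B}_0$: given a morphism $f$ with $\widehat{f}$ dying in the residue field of $\mathcal{B}_0$, the lemma gives $t(f) = 0$, so $f$ is $\otimes$-nilpotent, so $\widehat{f}$ also dies in the residue field of $\mathcal{B}$ by the nilpotence theorem. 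Now every finitely presented $\mathcal{K}$-module has the form $\image(\widehat{h})$ for some morphism $h$ of $\mathcal{K}$ (complete the map $g\colon x_0 \to x_1$ underlying a presentation $\widehat{x_0} \xrightarrow{\widehat g} \widehat{x_1} \to M \to 0$ to a distinguished triangle $x_0 \xrightarrow{g} x_1 \xrightarrow{h} x_2 \to x_0[1]$, so that $M \cong \image(\widehat h)$), and ``$\widehat{f}$ dies in the residue field of $\mathcal{C}$'' means precisely ``$\image(\widehat f) \in \mathcal{C}$''. Thus the previous sentence says exactly $\mathcal{B}_0 \subseteq \mathcal{B}$; since the points of $\Spc^h$ are maximal Serre $\otimes$-ideals, $\mathcal{B} = \mathcal{B}_0$, so $\Spc^h(\mathcal{K}) = \{\mathcal{B}_0\}$ is a singleton.

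The step I expect to be the main obstacle is the forward direction of the lemma: deducing from $t(f) = 0$ that the subquotient $\image(\widehat{f})$ of the representable $\widehat{a}$ actually lies in $\ker \widehat{t}$. This is exactly where it matters that $t$ is the triangulated weight complex functor rather than merely the weak one (so the distinguished triangle, and hence the splitting of $t(e)$, is available), and where one must be careful about the exactness properties of the induced functor $\widehat{t}$ on module categories. A secondary point requiring care is invoking Balmer's homological nilpotence theorem in the precise form used and correctly matching ``a morphism vanishing in a homological residue field'' with ``its image lying in the corresponding Serre $\otimes$-ideal''.
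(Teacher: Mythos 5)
Your proof is correct, and while it agrees with the paper's argument on the essential pivot (everything flows through the homological prime $\mathcal{B}_0 = \ker(\widehat{t})^{fp}$ of \Cref{prop:homologicalprimeabove0} and Balmer's homological nilpotence theorem), the two proofs differ in how much is spelled out and which results are invoked. The paper's proof is two sentences long: for ``singleton $\Rightarrow$ detects nilpotence'' it quotes Balmer's nilpotence theorem, and for the converse it outsources everything to Theorem~A.1 of \cite{Balmer:BigSupport}, noting only that $t$ has the primality property $t(f)\otimes t(g)=0 \Rightarrow t(f)=0$ or $t(g)=0$. You instead isolate and prove the equivalence ``$\image(\widehat f)\in\mathcal{B}_0 \iff t(f)=0$'' as a stand-alone lemma (via the triangle trick: $t(f)=0$ forces $t(e)$ to be a split epi, so $\widehat t(\image\widehat f)=\coker(\widehat{t(e)})=0$; conversely $\widehat{t(f)}$ factors through $\widehat t(\image\widehat f)$ and Yoneda is faithful), and then deduce the converse direction from first principles: any $\mathcal{B}\in\Spc^h(\mathcal{K})$ must contain $\mathcal{B}_0$ (every fp module is an $\image(\widehat h)$, and nilpotence of $h$, granted by the lemma and the hypothesis, pushes it into $\mathcal{B}$ via the easy half of the nilpotence theorem), so by maximality $\mathcal{B}=\mathcal{B}_0$. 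This buys you a self-contained argument that does not require citing the abstract uniqueness criterion of \cite{Balmer:BigSupport}, and it makes explicit the bridge between $t$ and $\mathcal{B}_0$ that the paper leaves tacit even in the easy direction; the cost is a bit more machinery on the module-category side (right-exactness of $\widehat t$, the compatibility $\widehat t(\widehat e)=\widehat{t(e)}$, flatness giving $\image(\widehat{f^{\otimes n}})=\image(\widehat f)^{\otimes n}$). All those steps check out, so the argument is sound and indeed a clean alternative to the paper's citation-driven proof.
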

\begin{proof}
    The condition of the homological spectrum being a singleton implies $t$ detects $\otimes$-nilpotency by \cite[Thm.~1.1]{Balmer:ResidueFields}, which tells us that any morphism that is in the kernel of every homological residue field is $\otimes$-nilpotent. On the other hand, detecting $\otimes$-nilpotency implies there can be no other homological primes by \cite[Thm.~A.1]{Balmer:BigSupport}. This final claim uses the fact that, in our case, $t(f) \otimes t(g)=0$ implies $t(f)=0$ or $t(g)=0$.
\end{proof}

\begin{remark}
    Note that, in general, the weight complex functor need not detect $\otimes$-nilpotence. Indeed, there are morphisms $f:T \rightarrow T(1)[1]$ in $\DM_{gm}(k)$ which are not $\otimes$-nilpotent (for example, we could take $k=\R$ and $f \in H_\mathcal{M}^{1,1}(\Spec(k)) \simeq K^M_1(\R)$ corresponding to the pure symbol $\{-1\}$), yet we obviously have $t(f)=0$ (the domain and codomain of $t(f)$ are one-term complexes concentrated in different degrees).
\end{remark}
\begin{theorem}\label{theorem:weightcomplexfunctordetectsnilpotence}
    Let $\mathcal{K}$ be as in \Cref{theorem:nilpotentspectrum}. Suppose moreover that $\mathcal{K}$ has a stable $\infty$-categorical enhancement and that the heart of the weight structure on $\mathcal{K}$ is a semisimple abelian category. Then the weight complex functor $t$ detects $\otimes$-nilpotence, that is, for any morphism $f:x \rightarrow y$ in $\mathcal{K}$, $t(f)=0$ implies $f^{\otimes n}=0$ for some $n$.
\end{theorem}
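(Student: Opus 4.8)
The plan is to reduce, using rigidity and the fact that $t$ is symmetric monoidal, to the case of a morphism $f\colon x\to\unit$ with target the tensor-unit, and then to show that $t(f)=0$ forces $f$ to factor through an object of strictly negative weight, at which point the nilpotency argument from the end of the proof of \Cref{theorem:nilpotentspectrum} applies word for word. For the reduction: given $f\colon x\to y$ (with $x,y$ rigid, as all objects of $\mathcal{K}$ are), let $\tilde f\colon x\otimes y^\vee\to\unit$ be the mate of $f$ under the rigidity adjunction. Since $t$ is symmetric monoidal (\Cref{prop:existenceoft}) it preserves duals, and $\K^b(\mathcal{A})$ is rigid because $\mathcal{A}$ is, so $t(\tilde f)$ is the mate of $t(f)$ and hence vanishes; likewise $\tilde f^{\otimes n}$ is, under the canonical isomorphism $(x\otimes y^\vee)^{\otimes n}\simeq x^{\otimes n}\otimes(y^{\otimes n})^\vee$, the mate of $f^{\otimes n}$, so $\tilde f^{\otimes n}=0$ if and only if $f^{\otimes n}=0$. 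Thus it suffices to treat $f\colon x\to\unit$.

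Assume then $f\colon x\to\unit$. As $\unit\in\mathcal{K}^\heartsuit\subseteq\mathcal{K}^{\le 0}$, the orthogonality $\mathcal{K}^{\ge 1}\perp\mathcal{K}^{\le 0}$ applied to the weight decomposition $w_{\ge 1}x\to x\to w_{\le 0}x$ shows $f$ factors as $x\to w_{\le 0}x\xrightarrow{\bar f}\unit$. Applying $\Hom(-,\unit)$ to the weight-$0$ decomposition $w_0x\xrightarrow{\alpha}w_{\le 0}x\xrightarrow{\beta}w_{\le -1}x\to w_0x[1]$ (here $w_0x\in\mathcal{K}^\heartsuit$, since weight decompositions of an object of $\mathcal{K}^{\le 0}$ may be chosen within $\mathcal{K}^{\le 0}$) shows $\bar f$ factors through $\beta$ precisely when $\bar f\circ\alpha=0$; and if so, $f$ factors through $w_{\le -1}x\in\mathcal{K}^{\le -1}\subseteq\mathcal{K}^{<0}$. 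It remains to see that $t(f)=0$ forces $\bar f\circ\alpha=0$. For this one uses the standard compatibility of the weight complex functor with weight truncations: viewed through the (faithful) functor $t\colon\mathcal{K}^\heartsuit\to\mathcal{A}$, the morphism $\bar f\circ\alpha$ is the degree-$0$ component of the chain map $t(f)$ (equivalently of $t(\bar f)$, since $t(x)\to t(w_{\le 0}x)$ is an isomorphism in degree $0$). Since $\mathcal{A}$ is semisimple abelian we may represent $t(x)$ by a complex with vanishing differentials (this is the zero-differentials hypothesis of \Cref{theorem:nilpotentspectrum}, automatic here); for such a representative a chain map into the one-term complex $\unit$ is null-homotopic if and only if it is the zero chain map, which happens if and only if its degree-$0$ component vanishes. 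As $t(f)=0$ in $\K^b(\mathcal{A})$, we conclude $\bar f\circ\alpha=0$, hence $f=g\circ h$ for some $h\colon x\to w_{\le -1}x$ and $g\colon w_{\le -1}x\to\unit$ with $w_{\le -1}x\in\mathcal{K}^{<0}$.

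Finally I would invoke the tail of the proof of \Cref{theorem:nilpotentspectrum} applied to $g$: by hypothesis $w_{\le -1}x$ admits a weight decomposition built from a strongly weight-nilpotent subset of the generators $\mathcal{S}$, so for every $n$ the object $(w_{\le -1}x)^{\otimes n}$ is a finite iterated extension of $n$-fold tensor products of those weight pieces, each of which is a coproduct of $n$-fold tensor products of generators; strong weight-nilpotency supplies an $n$ for which all the groups $\Hom(x_{j_1}\otimes\cdots\otimes x_{j_n},\unit)$ vanish, and then distributivity of $\otimes$ over coproducts together with the fact that $\Hom((w_{\le -1}x)^{\otimes n},\unit)$ is assembled from these by finitely many extensions forces $\Hom((w_{\le -1}x)^{\otimes n},\unit)=0$; hence $g^{\otimes n}=0$ and therefore $f^{\otimes n}=g^{\otimes n}\circ h^{\otimes n}=0$. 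I expect the crux to be the middle paragraph: pinning down the compatibility of $t$ with the weight decomposition, i.e. that $\bar f\circ\alpha$ is exactly the degree-$0$ part of $t(f)$ and that this part vanishing is equivalent to $t(f)=0$ once one works with a zero-differential model. The remaining ingredients are either purely formal (the rigidity reduction) or already established in the proof of \Cref{theorem:nilpotentspectrum}.
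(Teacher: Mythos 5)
Your proposal follows essentially the same route as the paper's own proof: reduce to a morphism $f\colon x\to\unit$ via rigidity, factor $f$ through the weight truncation $w_{\le 0}x$ by orthogonality, use $t(f)=0$ to kill the weight-$0$ component and hence factor $f$ further through $w_{\le -1}x\in\mathcal{K}^{<0}$, and then invoke the nilpotency argument from the proof of \Cref{theorem:nilpotentspectrum}. You merely spell out two points the paper leaves terse — that $t$ preserves the rigidity mate, and that with a zero-differential model (available by semisimplicity) the vanishing of $t(f)$ in $\K^b(\mathcal{A})$ literally forces its degree-$0$ component $w_0x\to\unit$ to be the zero morphism — which are correct and welcome clarifications rather than a different proof.
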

\begin{proof}
    The enhancement of $\mathcal{K}$ is to ensure the weight complex functor $t$ is defined and is a tt-functor (\cite{Sosnilo:WeightStructures}, \cite{Aoki:WeightFunctorIsTensor}). Let $f:x \rightarrow y$ be any morphism in $\mathcal{K}$ such that $t(f)=0$. Then to show $f$ is $\otimes$-nilpotent, it is enough to show the dual map $f^\vee:x \otimes y^\vee \rightarrow \unit$ is $\otimes$-nilpotent, as in \Cref{theorem:nilpotentspectrum}. In other words, without loss of generality we may assume $y=\unit$ is the tensor unit. Now considering a weight decomposition of $x$, we see that, as there are no morphisms from objects of positive weight to $\unit$, that the morphism $f:x \rightarrow \unit$ factors through its truncation at weight $0$, $x_{\leq 0}$:
    \[\begin{tikzcd}
        x_{\geq 1} \arrow[r] \arrow[rd, swap, "0"] & x \arrow[r] \arrow[d, "f"] & x_{\leq 0} \arrow[r] \arrow[dl, dashrightarrow] &  x_{\geq 1}[1] \\
        & \unit
    \end{tikzcd}\]
    In particular, it sufficies to show that $f_{\leq 0}:x_{\leq 0} \rightarrow \unit$ is $\otimes$-nilpotent. But by examining the triangle
    \[\begin{tikzcd}
        x_0 \arrow[r] \arrow[rd, swap, "0"] & x_{\leq 0} \arrow[r] \arrow[d, "f_{\leq 0}"] & x_{\leq -1} \arrow[r] \arrow[dl, dashrightarrow] &  x_0[1] \\
        & \unit
    \end{tikzcd}\]
    we see that this map factors further through $x_{\leq -1}$: indeed, the map $x_0 \rightarrow \unit$ is zero by the assumption $t(f)=0$. But now we are reduced to proving any morphism $x_{\leq -1} \rightarrow \unit$ is $\otimes$-nilpotent for $x_{\leq -1}$ living in $\mathcal{K}^{<0}$, which was shown during the course of the proof of \Cref{theorem:nilpotentspectrum}.
\end{proof}
\begin{cor}\label{cor:NoSholds}
    The comparison map from the homological spectrum to the Balmer spectrum is bijective for such categories $\mathcal{K}$: the homological spectrum $\Spc^h(\mathcal{K})$ is a singleton (equal to $\{\ker(\widehat{t})^{fp}\}$).
\end{cor}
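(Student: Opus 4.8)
The plan is purely to assemble results already in hand; no new argument is needed. Recall $\mathcal{K}$ is as in \Cref{theorem:weightcomplexfunctordetectsnilpotence}, so in particular it is rigid, it satisfies the hypotheses of \Cref{theorem:nilpotentspectrum} (whence $\Spc(\mathcal{K})=\{(0)\}$), it carries a stable $\infty$-categorical enhancement, and its weight structure has semisimple abelian heart $\mathcal{A}$; consequently the strong weight complex functor $t\colon \mathcal{K}\to\K^b(\mathcal{A})$ is a well-defined tt-functor.

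First I would invoke \Cref{theorem:weightcomplexfunctordetectsnilpotence}: the functor $t$ detects $\otimes$-nilpotence, i.e.\ $t(f)=0$ implies $f^{\otimes n}=0$ for some $n$. By the equivalence recorded in \Cref{prop:NoSiffnilpotency}, this forces $\Spc^h(\mathcal{K})$ to be a singleton. To name its unique point, I would appeal to \Cref{prop:homologicalprimeabove0}, which exhibits $\ker(\widehat{t})^{fp}$ as a homological prime (lying in the fibre over $(0)$); being the sole point of $\Spc^h(\mathcal{K})$, it must be exactly this one, so $\Spc^h(\mathcal{K})=\{\ker(\widehat{t})^{fp}\}$.

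It then remains to see the comparison map $\phi\colon \Spc^h(\mathcal{K})\to\Spc(\mathcal{K})$ is bijective. Since $\mathcal{K}$ is rigid, $\phi$ is surjective by \cite{Balmer:ResidueFields}; and as both source and target are singletons — the former by the previous paragraph, the latter by \Cref{theorem:nilpotentspectrum} — a surjection between them is automatically a bijection. Hence the \emph{Nerves of Steel} conjecture holds for $\mathcal{K}$, and specialising to $\DTM_{gm}(k/k;\F_2)$, $\DAM_{gm}(k/k;\F_2)$ and $\DATM_{gm}(k/k;\F_2)$ yields the motivic statement.

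I do not expect a genuine obstacle: the corollary is a formal consequence of \Cref{theorem:weightcomplexfunctordetectsnilpotence}, \Cref{prop:NoSiffnilpotency} and \Cref{prop:homologicalprimeabove0}. The only point requiring a moment's care is confirming that the concrete motivic categories meet the standing hypotheses of \Cref{theorem:weightcomplexfunctordetectsnilpotence} — they carry stable $\infty$-categorical enhancements as (smashing, by \Cref{remark:smashingisotropic}) localisations of Voevodsky's $\DM$, and their Chow weight structures have semisimple abelian hearts by \Cref{thmref:heartofgeometricisotropicmotives} — but both of these facts have already been established in the preceding sections.
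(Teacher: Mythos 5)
Your proposal is correct and follows exactly the implicit argument the paper intends: combine \Cref{theorem:weightcomplexfunctordetectsnilpotence} with \Cref{prop:NoSiffnilpotency} to get that $\Spc^h(\mathcal{K})$ is a singleton, identify its unique point as $\ker(\widehat{t})^{fp}$ via \Cref{prop:homologicalprimeabove0}, and conclude bijectivity from Balmer's surjectivity theorem since both sides are singletons. The paper leaves this corollary without a written proof precisely because it is this routine assembly, and your reconstruction matches it.
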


\begin{cor}\label{cor:NoSforTates}
    Let $k$ be a flexible field. Then Balmer's \emph{Nerves of Steel} conjecture holds for $\DTM_{gm}(k/k;\F_2)$. Its homological spectrum consists of the homological prime $\ker(\widehat{t})^{fp}$.
\end{cor}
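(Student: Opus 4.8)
The plan is to deduce this as an immediate instance of \Cref{cor:NoSholds} applied to $\mathcal{K}=\DTM_{gm}(k/k;\F_2)$, so the work reduces to checking that this category satisfies the three running hypotheses feeding into that corollary: (i) it is a rigid tt-category of the type appearing in \Cref{theorem:nilpotentspectrum}; (ii) it carries a bounded weight structure with semisimple abelian heart; and (iii) it admits a stable $\infty$-categorical enhancement, so that the strong weight complex functor $t\colon\DTM_{gm}(k/k;\F_2)\to\K^b(\mathcal{A})$ is defined and symmetric monoidal.

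For (i), rigidity of $\DTM_{gm}(k/k;\F_2)$ follows from rigidity of $\DM_{gm}(k/k;\F_2)$, the latter being the compact part of a rigidly-compactly generated category (and $\DTM_{gm}$ a thick tensor subcategory thereof generated by rigid objects); and \Cref{theorem:DTMsatisfiesthecondition} already establishes that $\DTM_{gm}(k/k;\F_2)$ fits into the framework of \Cref{theorem:nilpotentspectrum}, with generating set $\mathcal{S}=\{T(i)[j]\}_{i,j\in\Z}$, weight complexes choosable with zero differentials, and every finite subset of $\mathcal{S}\cap\DTM_{gm}(k/k;\F_2)^{<0}$ strongly weight-nilpotent. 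For (ii), by \Cref{thmref:heartofgeometricisotropicmotives} the isotropic Chow weight structure on $\DM_{gm}(k/k;\F_2)$ is bounded with semisimple abelian heart $\Chow_{Num}(k;\F_2)$; it restricts to $\DTM_{gm}(k/k;\F_2)$, where the heart is the full subcategory of pure Tate objects, equivalent (as noted in the proof of \Cref{theorem:DTMsatisfiesthecondition}) to the category of finite-dimensional $\Z$-graded $\F_2$-vector spaces, which is visibly semisimple abelian. For (iii), the enhancement is inherited from Voevodsky's category: $\DM(k;\F_2)$ is the homotopy category of a presentable stable symmetric monoidal $\infty$-category, $\DM(k/k;\F_2)$ is obtained from it by a symmetric monoidal Bousfield localization (\Cref{remark:smashingisotropic}), and $\DTM(k/k;\F_2)$ is the localizing subcategory generated by the set of rigid objects $\{T(i)[j]\}$; each of these operations stays within presentable stable $\infty$-categories, and passing to compact objects yields the desired enhancement of $\DTM_{gm}(k/k;\F_2)$.

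With (i)--(iii) verified, \Cref{theorem:weightcomplexfunctordetectsnilpotence} shows $t$ detects $\otimes$-nilpotence on $\DTM_{gm}(k/k;\F_2)$, and \Cref{cor:NoSholds} then gives that $\Spc^h(\DTM_{gm}(k/k;\F_2))$ is a singleton, equal to $\{\ker(\widehat{t})^{fp}\}$, with the comparison map $\phi$ a bijection; combined with the fact that the Balmer spectrum is a singleton (\Cref{cor:spectrumofDTM}), this is exactly the \emph{Nerves of Steel} conjecture for this category, and \Cref{prop:homologicalprimeabove0} identifies the unique homological prime. I do not anticipate any genuine obstacle: the mathematical content sits entirely in the earlier results, and the only point warranting an explicit sentence is the existence of the stable $\infty$-enhancement on the isotropic Tate subcategory, which is routine but deserves to be spelled out.
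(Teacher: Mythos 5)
Your argument is essentially the paper's own proof, which simply cites \Cref{cor:NoSholds} together with \Cref{theorem:DTMsatisfiesthecondition}; you have merely unpacked the hypothesis-checking (rigidity, bounded weight structure with semisimple heart, stable $\infty$-categorical enhancement) that the paper leaves implicit. The extra care you take in spelling out why the $\infty$-categorical enhancement on $\DTM_{gm}(k/k;\F_2)$ exists is a reasonable addition but does not change the route.
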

\begin{proof}
    Immediate from \Cref{cor:NoSholds} combined with \Cref{theorem:DTMsatisfiesthecondition}.
\end{proof}
\begin{cor}
    The preceding result holds for both $\DAM_{gm}(k/k;\F_2)$ and $\DATM_{gm}(k/k;\F_2)$. That is, their homological spectra are singletons and consist of the kernel $\ker(\widehat{t})^{fp}$ of the weight complex functor.
\end{cor}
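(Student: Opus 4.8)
The plan is to recognise $\DAM_{gm}(k/k;\F_2)$ and $\DATM_{gm}(k/k;\F_2)$ as further instances of the general framework of \Cref{cor:NoSholds}, in exact parallel with the treatment of $\DTM_{gm}(k/k;\F_2)$ in \Cref{cor:NoSforTates}. First I would invoke \Cref{theorem:spectraofDAM/DATM}, which already establishes that both categories satisfy the hypotheses of \Cref{theorem:nilpotentspectrum}: each is a rigid tt-category carrying the bounded weight structure restricted from the isotropic Chow weight structure on $\DM_{gm}(k/k;\F_2)$, generated by pure objects ($\{M(E)[j]\}$ in the Artin case, $\{M(E)(i)[j]\}$ in the Artin--Tate case), with every heart object a finite coproduct of these and every generator tensor-generating the whole category; moreover the bidegree analysis carried out in the proof of \Cref{theorem:spectraofDAM/DATM} shows that every finite set of generators lying in negative weight is strongly weight-nilpotent, so weight complexes can be taken with vanishing differentials and negative-weight objects admit strongly weight-nilpotent weight decompositions.

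It then remains only to confirm the two extra hypotheses of \Cref{theorem:weightcomplexfunctordetectsnilpotence} (and hence of \Cref{cor:NoSholds}). The stable $\infty$-categorical enhancement is inherited from that of $\DM(k;\F_2)$ and restricts to the (big and small) Artin and Artin--Tate subcategories. For the semisimplicity of the heart, note that the heart of the restricted weight structure on $\DAM_{gm}(k/k;\F_2)$ consists of the summands of the $M(E)$, and by the computation in the proof of \Cref{theorem:spectraofDAM/DATM} one has $M(E)\simeq 0$ when $[E:k]$ is even and $M(E)\simeq T$ when $[E:k]$ is odd, so this heart is equivalent to finite-dimensional $\F_2$-vector spaces; allowing the pure Tate twists $(i)[2i]$ in the Artin--Tate case gives finite-dimensional $\Z$-graded $\F_2$-vector spaces. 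Both are semisimple abelian, as one also sees by restricting \Cref{thmref:heartofgeometricisotropicmotives}.

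With all hypotheses verified, \Cref{cor:NoSholds} applies directly: the weight complex functor detects $\otimes$-nilpotence on each category, the comparison map $\phi$ is a bijection onto the (singleton) Balmer spectrum of \Cref{theorem:spectraofDAM/DATM}, and $\Spc^h(\mathcal{K})=\{\ker(\widehat{t})^{fp}\}$ for $\mathcal{K}$ either of $\DAM_{gm}(k/k;\F_2)$ and $\DATM_{gm}(k/k;\F_2)$. I do not expect a genuine obstacle here---the argument is essentially a bookkeeping exercise in matching hypotheses---so the only place warranting a moment's care is pinning down the hearts of the two restricted weight structures, which is however already implicit in \Cref{theorem:spectraofDAM/DATM} and \Cref{thmref:heartofgeometricisotropicmotives}.
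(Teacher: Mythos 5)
Your proposal is correct and takes the same route as the paper: the paper's proof is simply ``Identical to \Cref{cor:NoSforTates},'' which amounts exactly to citing \Cref{theorem:spectraofDAM/DATM} to verify the hypotheses of \Cref{theorem:nilpotentspectrum} and then invoking \Cref{cor:NoSholds}, as you do. One small overreach: the proof of \Cref{theorem:spectraofDAM/DATM} only shows that $M(E)$ for $[E:k]$ odd has the same isotropic motivic cohomology as $T$, not that $M(E)\simeq T$ outright; this does not affect your argument, since the semisimplicity of the heart is anyway secured by restricting \Cref{thmref:heartofgeometricisotropicmotives}, as you note.
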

\begin{proof}
    Identical to \Cref{cor:NoSforTates}.
\end{proof}
\begin{remark}\label{remark:homologicalprimeisnonzero}
    In the case $\mathcal{K}$ is one of $\DM_{gm}(k/k;\F_2)$, $\DTM_{gm}(k/k;\F_2)$, $\DATM_{gm}(k/k;\F_2)$, \Cref{prop:tisnotconservative}, \Cref{remark:tisnotconservativeforTates} and \Cref{cor:NoSforTates} tell us that the unique homological prime $\ker(\widehat{t})^{fp} \in \Spc^h(\mathcal{K})$ of \Cref{prop:homologicalprimeabove0} is nonzero. Indeed, the nonzero ideal $\ker(\widehat{t})$ is generated under filtered colimits by its finitely-presented part (\cite[Prop.~A.6]{Balmer:Ruminations}) so one just needs to note the restricted Yoneda $h:\mathcal{T} \rightarrow \textsf{Mod}-\mathcal{K}$ has trivial kernel (by virtue of compact generation). In particular, $h(\Omega) \in \ker(\widehat{t})$ is nonzero.
\end{remark}
\begin{remark}
    It is perhaps worth noting that despite the simplicity of $\DTM(k/k;\F_2)$, it is \emph{not} a tt-field. Indeed, for a tt-field $\mathcal{F}$, any tt-functor $F:\mathcal{F} \rightarrow \mathcal{C}$ which is conservative on compact objects and geometric (i.e. preserves coproducts) is conservative itself: for $0 \neq X \in \mathcal{F}$, we simply write $X=\oplus x_i$ for nonzero compact $x_i$ and then $F(X)=\oplus F(x_i) \neq 0$. Since the weight complex functor $t$ on $\DTM(k/k;\F_2)$ is not conservative (\Cref{prop:tisnotconservative}), we conclude $\DTM(k/k;\F_2)$ can't be a tt-field ($\Omega$ can't be written as the coproduct of compact objects). Moreover, for a tt-field $\mathcal{F}$ the Balmer spectrum $\Spc(\mathcal{F}^c)$ is a singleton (\cite[Prop.~5.15]{Balmer:Ruminations}), and so there are no nontrivial thick tt-ideals in $\mathcal{F}$ (as every such contains a compact object by definition of being a tt-field). Consequently, the kernel of such any tt-functor $F:\mathcal{F} \rightarrow \mathcal{G}$, to some other tt-category $\mathcal{G}$, is either $\mathcal{F}$ or zero. That is, $F$ is zero if it is not conservative.
\end{remark}
\begin{remark}
    As in \cite{Barthel:Surjectivity}, surjectivity of the induced map on homological spectra (of the subcategories of compact objects) is equivalent to detecting weak ring objects, that is, $\Spc^h(t)$ is surjective iff for any ring object $X$, we have $t(X)=0$ only if $X=0$. Since the \emph{Nerves of Steel} conjecture holds for our categories in question, this becomes equivalent to the induced map on the Balmer spectra of compact objects being surjective. As pointed out in \textit{loc. cit.}, this is \emph{not} equivalent to the family of functors being studied being jointly conservative, and $\DTM_{gm}(k/k;\F_2)$ gives another example of this being an insufficient criterion (that is to say, the induced map $\Spc(t):\Spc((\F_2-\textsf{Vect}_{**})^c)) \rightarrow \Spc(\DTM_{gm}(k/k;\F_2))$ is surjective yet the map on the big categories $t:\DTM(k/k;\F_2) \rightarrow \F_2-\textsf{Vect}_{**}$ fails to be conservative).
\end{remark}

\section{Stratification}
The next result deals with the question of stratification (in the sense of \cite{Barthel:Stratification}\footnote{There are various notions of stratification for tt-categories, but in this paper we only concern ourselves with stratification via the spectrum of the compacts $\Spc(\mathcal{T}^c)$.}) for our categories. First let us recall what is meant by stratification of big tt-categories: a rigidly-compactly generated tt-category $\mathcal{T}$ with a weakly Noetherian Balmer spectrum is \emph{stratified} if it satisfies a \emph{local-to-global} principle and \emph{minimality} condition. To briefly summarise, there is an idempotent $g(\mathcal{P}) \in \mathcal{T}$ associated to each prime $\mathcal{P} \in \Spc(\mathcal{T}^c)$ (which we will recall below); the local-to-global condition asserts we have
\[\mathcal{T} = \Loc_\otimes\ideal{g(\mathcal{P}) \mid \mathcal{P} \in \Spc(\mathcal{T}^c)},\]
and the minimality condition requires $g(\mathcal{P}) \otimes \mathcal{T}$ to be a minimal localising $\otimes$-ideal.
The object $g(\mathcal{P})$ is defined via
\[g(\mathcal{P}) \coloneqq e_{Y_1} \otimes f_{Y_2}\]
where $Y_1,Y_2 \subseteq \Spc(\mathcal{T}^c)$ are Thomason subsets such that $\{\mathcal{P}\} = Y_1 \cap (\Spc(\mathcal{T}^c) \setminus Y_2)$ (and indeed the resulting object $g(\mathcal{P})$ is independent of such choices), and the objects $e_Y,f_Y$ for a given Thomason subset $Y \subseteq \Spc(\mathcal{T}^c)$ are defined via an idempotent triangle
\[\begin{tikzcd}
    e \arrow[r] & \unit \arrow[r] & f \arrow[r] & e[1]
\end{tikzcd}\]
(that is, $e \otimes f \simeq 0$) with $\ker(f \otimes -) = \Loc\ideal{\mathcal{T}_Y^c}$ where
\[\mathcal{T}_Y^c \coloneqq \{x \in \mathcal{T}^c \mid \supp(x) \subseteq Y\}.\]
By \cite[Thm.~A]{Barthel:Stratification}, being stratified is equivalent to having a bijective correspondence between localising $\otimes$-ideals of $\mathcal{T}$ and subsets of $\Spc(\mathcal{T}^c)$, given by taking supports.

\begin{prop}\label{prop:stratification}
    The categories $\DTM(k/k;\F_2)$, $\DATM(k/k;\F_2)$ are not stratified.
\end{prop}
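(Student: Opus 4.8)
The plan is to contradict stratification using the singleton Balmer spectrum together with the failure of conservativity of the weight complex functor established in \Cref{prop:tisnotconservative}. Write $\mathcal{T}$ for either $\DTM(k/k;\F_2)$ or $\DATM(k/k;\F_2)$ and $\mathcal{K}=\mathcal{T}^c$ for its rigid-compact part (recall these categories are rigidly-compactly generated, as noted in the preamble to this section). By \Cref{cor:spectrumofDTM} (resp. \Cref{theorem:spectraofDAM/DATM}) the space $\Spc(\mathcal{K})$ is a singleton; in particular it is Noetherian, hence weakly Noetherian, so ``stratified'' in the sense of \cite{Barthel:Stratification} is a meaningful condition to test. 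By \cite[Thm.~A]{Barthel:Stratification}, if $\mathcal{T}$ were stratified then taking supports would give a bijection between the localising $\otimes$-ideals of $\mathcal{T}$ and the subsets of $\Spc(\mathcal{K})$. Since a one-point space has exactly two subsets, this would force $\mathcal{T}$ to possess exactly two localising $\otimes$-ideals, namely $0$ and $\mathcal{T}$ itself. So it suffices to exhibit a localising $\otimes$-ideal which is neither zero nor everything.

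First I would take the big weight complex functor $t\colon \mathcal{T} \to \K(\Chow_{Num,\oplus}(k;\F_2))$ supplied by \Cref{prop:existenceoft}: it is triangulated, coproduct-preserving and symmetric monoidal, so $\ker(t)$ is a localising $\otimes$-ideal (it is triangulated and thick, closed under coproducts since $t$ preserves them, and $t(x)\simeq 0$ forces $t(x\otimes y)\simeq t(x)\otimes t(y)\simeq 0$ for all $y$). Next I would check $\ker(t)$ is \emph{proper}: since $t$ is monoidal, $t(\unit)\simeq\unit$, and the unit of $\K(\Chow_{Num,\oplus}(k;\F_2))$ is the one-term complex on the indecomposable unit of $\Chow_{Num}(k;\F_2)$, which is nonzero; hence $\unit\notin\ker(t)$. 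Finally $\ker(t)$ is \emph{nonzero}: the object $\Omega$ of \Cref{prop:tisnotconservative}, which by \Cref{remark:tisnotconservativeforTates} lies in both $\DTM(k/k;\F_2)$ and $\DATM(k/k;\F_2)$, is nonzero (as $\Hom(T,\Omega)\neq 0$) yet satisfies $t(\Omega)\simeq 0$. Thus $0\subsetneq\ker(t)\subsetneq\mathcal{T}$, contradicting the dichotomy above, so $\mathcal{T}$ is not stratified.

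There is no genuine obstacle here—the content is entirely packaged in \Cref{prop:tisnotconservative}—but two bookkeeping points deserve care. First, because the convention of \cite{Barthel:Stratification} requires a weakly Noetherian spectrum for ``stratified'' to be defined at all, one must record explicitly that a singleton space is (weakly) Noetherian before invoking \cite[Thm.~A]{Barthel:Stratification}. Second, it is worth noting that the same mechanism yields much more than a single extra ideal: varying the construction of $\Omega$ (e.g.\ using different tails of the sequence of Milnor duals $r_i$) produces further proper nonzero localising $\otimes$-ideals sitting inside $\ker(t)$, which is the source of the infinitely many localising ideals advertised in \Cref{remark:localisingideals}.
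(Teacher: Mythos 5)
Your proposal is correct and follows essentially the same route as the paper: invoke \cite[Thm.~A]{Barthel:Stratification} to reduce stratification over a singleton spectrum to there being no nontrivial localising $\otimes$-ideal, then exhibit $\ker(t)$ as such an ideal using the nonconservativity established in \Cref{prop:tisnotconservative}. The extra bookkeeping you include (singleton spaces are weakly Noetherian; $\ker(t)$ is proper because $t$ is monoidal and the target unit is nonzero) is accurate and merely spells out what the paper leaves implicit.
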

\begin{proof}
    By \cite[Thm.~A]{Barthel:Stratification} mentioned above, we immediately deduce that a big tt-category $\mathcal{T}$ with $\Spc(\mathcal{T}^c)=\{0\}$ a singleton is stratified iff the only localising $\otimes$-ideals of $\mathcal{T}$ are the trivial ones (the zero ideal and $\mathcal{T}$ itself). The Balmer specturm of the compact part of the categories in the statement are singletons by \Cref{cor:spectrumofDTM} and \Cref{theorem:spectraofDAM/DATM}, however we know the big categories have nontrivial localising $\otimes$-ideals: $\ker(t)$ gives nontrivial localising $\otimes$-ideals of $\DTM(k/k;\F_2)$ and $\DATM(k/k;\F_2)$ by \Cref{prop:tisnotconservative}. So, the categories are not stratified.
\end{proof}
\begin{remark}
    As our spectrum $\Spc(\DTM_{gm}(k/k;\F_2))$ is moreover Noetherian, Theorem~B of \textit{loc. cit.} tells us the local-to-global principle is satisfied. So, the minimality condition is what fails. This states that the localising $\otimes$-ideal $g(\mathcal{P}) \otimes \DTM(k/k;\F_2)$ for the (unique) prime $\mathcal{P}=(0) \in \Spc(\DTM_{gm}(k/k;\F_2))$ is not minimal. This localising ideal can easily be seen to equal $\DTM(k/k;\F_2)$ itself: for $\mathcal{P}=(0)$ we see that we are forced to choose $Y_1=\Spc(\DTM_{gm}(k/k;\F_2))$, $Y_2=\emptyset$. Then
    \[\mathcal{T}_{Y_1}^c=\mathcal{T}^c; \ \mathcal{T}_{Y_2}^c=(0)\]
    so that $f_{Y_1}=0=e_{Y_2}$ whilst $e_{Y_1}=T=f_{Y_2}$ (the only $f$ for which $f \otimes -$ has kernel $\mathcal{T}^c$ is $f=0$, whilst $f \otimes -$ having trivial kernel implies $e=0$ as $e \otimes f = 0$, so the distinguished triangle forces the map $\unit \rightarrow f$ to be an isomorphism), whence $g(\mathcal{P})=T \otimes T=T$ is the $\otimes$-unit.

    We have already observed that $\DTM(k/k;\F_2)$ is not a minimal localising $\otimes$-ideal. The same holds if we replace $\DTM(k/k;\F_2)$ with $\DATM(k/k;\F_2)$ everywhere. Additionally, we note the above computation renders the fact that the local-to-global principle holds in our case trivial.
\end{remark}

\begin{remark}\label{remark:localisingideals}
    One can actually show that there are infinitely many distinct localising tt-ideals in the categories $\DTM(k/k;\F_2)$ and  $\DATM(k/k;\F_2)$. To see this, for a given infinite subset $S \subseteq \N$ of natural numbers we define the isotropic motive
    \[\Omega(S) \coloneqq \hocolim_{n \in S}(r_n)\]
    where we start from the tensor unit $T$, and we equip $S$ with the order inherited from $\N$. For example, the motive $\Omega$ of \Cref{prop:tisnotconservative} corresponds to the subset $S=\N$. It is not hard to see that every such $\Omega(S)$ is a nonzero noncompact object which lies in the kernel of the weight complex functor $t$. One can show that for subsets $S,S' \subseteq \N$, we have $\Loc_\otimes\ideal{\Omega(S)} \simeq \Loc_\otimes\ideal{\Omega(S')}$ iff $S,S'$ have finite discrepancy, that is, if they differ by only a finite set of elements. This relies on the observation $\Omega(S) \otimes  \Omega(S')$ vanishes iff $S \cap S'$ is infinite (cf. \Cref{remark:squarestozero} below), and using the fact that (up to shifts) we can ignore finitely many elements of a given $S \subseteq \N$ (since we can truncate homotopy colimits to get isomorphic objects).
\end{remark}

It would be interesting to know whether our categories of isotropic (Artin--)Tate motives satisfy the telescope conjecture (that every \emph{smashing} $\otimes$-ideal is compactly generated). An instance of a compactly generated smashing ideal is given isotropisation itself: by \Cref{remark:smashingisotropic}, the ideal $\ideal{M(X) \mid X \ p\text{-anisotropic}} \subseteq \DM(k;\F_p)$ defining the isotropic category $\DM(k/k;\F_p)$ is smashing, and is visibly compactly generated. It is known stratified categories satisfy the telescope conjecture whenever their Balmer spectrum (of the compact objects) is generically Noetherian (\cite[Thm.~I]{Barthel:Stratification}). The proof of the above fails to disprove the telescope conjecture for $\DTM(k/k;\F_2)$ directly since we do not know if the $\otimes$-ideal $\ker(t)$ is smashing. By \cite{Balmer:Telescope} we know that smashing subcategories bijectively correspond to right idempotents $\eta:\unit \rightarrow f$ in our (big) category, up to isomorphism (it is not hard to prove no such idempotents exist in the compact part of our categories, except the trivial ones $0:\unit \rightarrow 0$ and $\id:\unit \rightarrow \unit$). Thus to understand the telescope conjecture for our category we need to understand such idempotents. One can show $\ker(t)=\ker(U(\unit) \otimes -)$ (where $t \dashv U$), but we do not know if $U(\unit)$ is idempotent.

Moreover, \cite[Prop.12.7]{Barthel:Descent} applied to the category $\DAM(k/k;\F_2)$ gives that the big category of isotropic Artin motives over a flexible field with $\F_2$-coefficients is stratified iff the image of the unit $U(\unit)$ under the right adjoint to $t$ generates the entire category. Indeed, their result states that if you have a geometric tt-functor $F:\mathcal{T} \rightarrow \mathcal{S}$ between big tt-categories, then the unit $\unit_\mathcal{T}$ belonging to the localising ideal generated by $U(\unit_\mathcal{S})$ along with the identities $\Supp(F(t))=\Spc(F^c)^{-1}(\Supp(t))$ for every $t \in \mathcal{T}$ is equivalent to saying $\mathcal{T}$ is stratified alongside $\Spc(F^c)$ being surjective. As all our spectra in question are singletons, the support $\Supp(x)$ of any object $x$ (compact or not) is empty iff $x \simeq 0$. The claim now follows since our target category $\K(\mathcal{A}_\oplus)$ is evidently stratified.

The upshot of all of this is that in order to understand the stratification (or lack thereof) of $\DAM(k/k;\F_2)$, as well as the telescope conjecture for any of our isotropic categories, a better understanding of what the right adjoint to the weight complex functor looks like is essential.

Since our obstruction to stratification above arose from the kernel of the weight complex functor on the large category, let us investigate this kernel further. We begin with a lemma about morphism groups of compactly-generated triangulated categories admitting stable $\infty$-categorical enhancements.

\begin{lemma}\label{lemma:homotopycolimits}
    Let $\mathcal{T}$ be a compactly-generated triangulated category with a stable $\infty$-categorical enhancement $\mathcal{C}$. Then for every object $X \in \mathcal{T}$, we can write it as a filtered (homotopy) colimit $X \simeq \colim_{i \in I}(x_i)$ of compact objects $x_i \in \mathcal{T}^c$, with $I$ a filtered diagram in $\mathcal{T}$. Moreover, letting $x \in \mathcal{T}^c$ be a compact object, we have an identification of $\Hom$-sets in $\mathcal{T}$
    \[\Hom_\mathcal{T}(x,X) \simeq \colim_{i \in I}\Hom_\mathcal{T}(x,x_i).\]
\end{lemma}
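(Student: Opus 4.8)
The plan is to prove both assertions inside the stable $\infty$-categorical enhancement $\mathcal{C}$ and then descend to $\mathcal{T}=\operatorname{Ho}(\mathcal{C})$. The starting observation is that the enhancement of a compactly generated triangulated category is itself a compactly generated stable $\infty$-category: $\mathcal{C}\simeq\operatorname{Ind}(\mathcal{C}^c)$, where $\mathcal{C}^c$ is its (essentially small) subcategory of compact objects, and $\operatorname{Ho}(\mathcal{C}^c)=\mathcal{T}^c$, so that the $\infty$-categorical and the triangulated notions of compactness agree. Granting this, for the first claim I would take the slice $\infty$-category $\mathcal{C}^c_{/X}$ of compact objects mapping to $X$: it is filtered, and the tautological cocone exhibits $X$ as the colimit in $\mathcal{C}$ of the forgetful diagram $\mathcal{C}^c_{/X}\to\mathcal{C}^c\hookrightarrow\mathcal{C}$. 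Setting $I:=\mathcal{C}^c_{/X}$ (or, after a routine cofinality reduction, an ordinary filtered category mapping cofinally to it) and letting $x_i$ be the associated compact objects, this colimit, computed in $\mathcal{C}$, is by definition the homotopy colimit of the diagram $(x_i)$ in $\mathcal{T}$. This is the first statement.

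For the Hom-set identification I would use that, $x$ being compact, the mapping-space functor $\Map_{\mathcal{C}}(x,-)\colon\mathcal{C}\to\mathcal{S}$ preserves filtered colimits, so
\[
\Map_{\mathcal{C}}(x,X)\;\simeq\;\Map_{\mathcal{C}}\bigl(x,\colim_{i\in I}x_i\bigr)\;\simeq\;\colim_{i\in I}\Map_{\mathcal{C}}(x,x_i).
\]
Applying $\pi_0$, which is a left adjoint (to the inclusion $\Set\hookrightarrow\mathcal{S}$ of discrete spaces) and hence commutes with all colimits, and using $\pi_0\Map_{\mathcal{C}}(a,b)=\Hom_{\operatorname{Ho}(\mathcal{C})}(a,b)=\Hom_{\mathcal{T}}(a,b)$, we obtain
\[
\Hom_{\mathcal{T}}(x,X)\;\simeq\;\colim_{i\in I}\Hom_{\mathcal{T}}(x,x_i),
\]
which is the second statement.

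The mathematics here is light; the subtle points are all about matching conventions. The two things to be careful about are: (i) that compactness of an object of $\mathcal{T}$, in the usual triangulated sense, genuinely coincides with its compactness in the enhancement $\mathcal{C}$ --- this is built into the meaning of ``$\mathcal{C}$ is an enhancement of the compactly generated category $\mathcal{T}$'', via $\operatorname{Ho}(\mathcal{C}^c)=\mathcal{T}^c$; and (ii) that ``homotopy colimit in $\mathcal{T}$'' must be read through $\mathcal{C}$, since a triangulated category on its own admits functorial homotopy colimits only for sequential diagrams (the B\"okstedt--Neeman telescope) and not for arbitrary filtered ones. Once these conventions are pinned down, the second step is essentially just the definition of a compact object, and the only remaining freedom is whether to present $I$ as the filtered $\infty$-category $\mathcal{C}^c_{/X}$ or, after a standard cofinality argument, as an ordinary filtered category; either choice is harmless.
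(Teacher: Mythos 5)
Your proof is correct and follows essentially the same route as the paper's: express $X$ as a filtered colimit of compacts in the enhancement $\mathcal{C}\simeq\operatorname{Ind}(\mathcal{C}^c)$, use that compact objects preserve filtered colimits of mapping spaces, and apply $\pi_0$ (a left adjoint, hence colimit-preserving) to descend to $\Hom$-sets in $\mathcal{T}$. The only cosmetic difference is that you spell out the slice construction $\mathcal{C}^c_{/X}$ where the paper simply cites the definition of compact generation and invokes HTT~5.3.1.18 for the cofinality reduction you allude to; you also leave implicit the final observation (made explicit in the paper) that colimits in the nerve of a $1$-category agree with ordinary $1$-categorical colimits.
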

\begin{proof}
    The fact that we can express such an $X$ as a filtered colimit in the $\infty$-category $\mathcal{C}$ is by definition of compact generation for $\infty$-categories ($\mathcal{C}$ is the Ind-completion of its compact objects). We may assume the diagram $I$ is the (nerve of a) 1-category by \cite[Prop.~5.3.1.18]{Lurie:HTT}. Compact objects commute with filtered colimits (again by definition) so we have an equivalence of mapping spaces
    \[\Map_\mathcal{C}(x,X) \simeq \colim_{i \in I}\Map_\mathcal{C}(x,x_i).\]
    To recover the morphism sets in $\mathcal{T}$, we take connected components. $\pi_0$ commutes with colimits---it is left adjoint to the inclusion of (the nerve of) the category of discrete spaces---see \cite[Prop.~5.5.6.18]{Lurie:HTT}---so we reduce to working in the nerve of the category of sets. The final required observation is that colimits in the nerves of $1$-categories agree with colimits computed in the $1$-categories themselves.
\end{proof}

Now let $\mathcal{K}$ be as in \Cref{cor:NoSholds} (and the theorem that precedes it). Assume further that $\mathcal{K}$ is the rigid-compact part of a big tt-category $\mathcal{T}$ which is compactly generates, and extend the weight complex functor to $t:\mathcal{T} \rightarrow \K(\mathcal{A}_\oplus)$ as in \Cref{prop:existenceoft}.

\begin{lemma}\label{lemma:expressionofkernel}
    Every $X \in \ker(t) \subseteq \mathcal{T}$ can be expressed as the filtered (homotopy) colimit
    \[X \simeq \colim_{i \in I}(x_i)\]
    with each $x_i \in \mathcal{K}$ and such that, for each $i \in I$, there is some morphism $f_{ij}:x_i \rightarrow x_j$ of the $I$-shaped diagram in $\mathcal{T}$ which is $\otimes$-nilpotent. Conversely, every such $X$ is in $\ker(t)$: this describes all objects of the kernel.
\end{lemma}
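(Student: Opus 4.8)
The plan is to prove the two directions separately, leaning on three facts that are by now available. First, the extended weight complex functor $t\colon\mathcal{T}\to\K(\mathcal{A}_\oplus)$ preserves filtered homotopy colimits: it is built as the coproduct-preserving (Ind-theoretic) extension of the weight complex functor on $\mathcal{K}$ (\Cref{prop:existenceoft}, \cite{Sosnilo:WeightStructures}), and such a functor commutes with filtered homotopy colimits. Second, $\K(\mathcal{A}_\oplus)$ is a semisimple abelian tt-field (\Cref{lemma:homotopycategoryisattfield}), in particular it has no nonzero $\otimes$-zero divisors, and hence no nonzero $\otimes$-nilpotent morphisms: a morphism $g$ in a semisimple abelian category factors as a split epimorphism onto $\image(g)$ followed by a split monomorphism, so $g^{\otimes n}$ restricts to an isomorphism on $\image(g)^{\otimes n}$, whence $g^{\otimes n}=0$ forces $\image(g)^{\otimes n}\simeq 0$, so $\image(g)\simeq 0$ and $g=0$. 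Third, $t$ detects $\otimes$-nilpotence on the compact part $\mathcal{K}$ (\Cref{theorem:weightcomplexfunctordetectsnilpotence}).

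For the forward direction, start with $X\in\ker(t)$ and apply \Cref{lemma:homotopycolimits} to write $X\simeq\colim_{i\in I}x_i$ with $I$ filtered and each $x_i\in\mathcal{K}$ compact. Since $t$ preserves filtered homotopy colimits, $\colim_i t(x_i)\simeq t(X)\simeq 0$. Each $t(x_i)$ lies in $\K^b(\mathcal{A})$ and is compact in $\K(\mathcal{A}_\oplus)$ (objects of $\mathcal{A}$ are compact in $\mathcal{A}_\oplus$ by the hypothesis of \Cref{prop:semisimpleabeliancompletion}, and bounded complexes of compacts are compact), so $\colim_j\Hom(t(x_i),t(x_j))\cong\Hom(t(x_i),\colim_j t(x_j))=0$. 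Therefore the class of $\id_{t(x_i)}$ dies at a finite stage of this filtered colimit of abelian groups: there is a transition morphism $f_{ij}\colon x_i\to x_j$ of the diagram with $t(f_{ij})=0$. As $f_{ij}$ is a morphism of $\mathcal{K}$, \Cref{theorem:weightcomplexfunctordetectsnilpotence} makes it $\otimes$-nilpotent. Since $i\in I$ was arbitrary, the diagram $\{x_i\}$ has exactly the form asserted.

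For the converse, let $X\simeq\colim_{i\in I}x_i$ be a filtered homotopy colimit with $x_i\in\mathcal{K}$ such that for each $i$ some transition map $f_{ij}\colon x_i\to x_j$ is $\otimes$-nilpotent. Applying $t$ gives $t(X)\simeq\colim_i t(x_i)$, and since $t$ is symmetric monoidal each $t(f_{ij})$ is $\otimes$-nilpotent in $\K(\mathcal{A}_\oplus)$, hence zero by the second fact above. Consequently every cocone map $\mu_i\colon t(x_i)\to t(X)$ satisfies $\mu_i=\mu_j\circ t(f_{ij})=0$, so under the identification $\Hom(t(X),t(X))\cong\lim_i\Hom(t(x_i),t(X))$ the morphism $\id_{t(X)}$ corresponds to the zero family and is therefore $0$; thus $t(X)\simeq 0$ and $X\in\ker(t)$.

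The main obstacle I expect is making the colimit bookkeeping in $\K(\mathcal{A}_\oplus)$ airtight: one must be certain that $t$ genuinely commutes with the relevant filtered homotopy colimits and that $t(x_i)$ is still compact in the (a priori somewhat wild) target, since the whole argument turns on pushing an identity morphism off to zero along the diagram. Once these exchange-of-limits issues are pinned down --- for instance by working in the stable $\infty$-categorical enhancements, where $t$ is exact and coproduct-preserving, hence colimit-preserving, and $\K(\mathcal{A}_\oplus)$ is the Ind-completion of $\K^b(\mathcal{A})$ --- the remainder is formal.
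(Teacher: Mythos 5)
Your proof is correct and follows essentially the same route as the paper's: writing $X$ as a filtered colimit of compacts, pushing through the colimit-preserving, symmetric monoidal $t$, using compactness of $t(x_i)$ to see the identity dies at a finite stage, and then invoking \Cref{theorem:weightcomplexfunctordetectsnilpotence}. The only notable difference is cosmetic: where the paper dismisses the converse as easy (pointing to the lack of nonzero $\otimes$-nilpotent morphisms in the target via \Cref{prop:NoSiffnilpotency}), you spell it out, proving directly that a semisimple abelian tensor category with no $\otimes$-zero divisors has no nonzero $\otimes$-nilpotent morphisms and then chasing the cocone maps to conclude $\id_{t(X)}=0$. Both proofs rely on the same implicit compactness hypothesis on $\mathcal{A}\subseteq\mathcal{A}_\oplus$, which you at least flag explicitly.
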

\begin{proof}
    Let $X \in \mathcal{T}$. Using the stable $\infty$-categorical enhancement, we may write
    \[X \simeq \colim_{i \in I}(x_i)\]
    for some filtered category $I$ and compact $x_i \in \mathcal{K}$.
    
    Now assume $t(X)=0$. Then we have
    \[0=t(X)=\colim_{i \in I}(t(x_i)) \in \K(\mathcal{A}_\oplus).\]
    Now we work in the category $\K(\mathcal{A}_\oplus)$, which is semisimple abelian by \Cref{prop:semisimpleabeliancompletion} (and using the fact that then the homotopy category of a semisimple abelian category is also semisimple abelian).
    
    Fix some $i \in I$ and consider $t(x_i)$. If every morphism $t(f_{ij}):t(x_i) \rightarrow t(x_j)$ coming from the $I$-shaped diagram in $\mathcal{T}$ were nonzero, then the homotopy colimit $t(X)$ would be nonzero, for then there is a nonzero map $t(x_i) \rightarrow t(X) \in \colim_{j \in I}\Hom(t(x_i),t(x_j))$ (invoking \Cref{lemma:homotopycolimits}). This contradicts $t(X)=0$, so we must have that, for each $i \in I$, there is some $j \in I$ and a vanishing morphism $t(f_{ij}):t(x_i) \rightarrow t(x_j)$. But by \Cref{theorem:weightcomplexfunctordetectsnilpotence}, this is equivalent to each $f_i$ being $\otimes$-nilpotent.
    
    The converse is easy (the target category has no nonzero $\otimes$-nilpotence morphisms, cf.~\Cref{prop:NoSiffnilpotency}).
\end{proof}
\begin{lemma}\label{lemma:DAMcomposition}
    In $\mathcal{K}=\DAM_{gm}(k/k;\F_2)$, the composition of $\otimes$-nilpotent morphisms
    \[\begin{tikzcd}
        x \arrow[r, "f"] & y \arrow[r, "g"] & z
    \end{tikzcd}\]
    is zero.
\end{lemma}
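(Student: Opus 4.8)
The plan is to reduce, using rigidity, to a single composable pair whose final target is the unit, and then to show that such a composite lands in a $\Hom$-group which the weight structure forces to vanish. So first I would note that, since $\mathcal{K}:=\DAM_{gm}(k/k;\F_2)$ is rigid, it suffices to prove that the conom $\operatorname{conom}(gf)\colon z^\vee\otimes x\to\unit$ vanishes. Writing $\operatorname{conom}(gf)=\operatorname{conom}(g)\circ(\id_{z^\vee}\otimes f)$, and observing that tensoring with an identity preserves $\otimes$-nilpotence while conaming both preserves and reflects it, I am reduced to the following: if $\psi\colon p\to q$ and $\phi\colon q\to\unit$ are both $\otimes$-nilpotent, then $\phi\psi=0$. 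Throughout I would use that a $\otimes$-nilpotent morphism has vanishing weight complex (the weight complex functor $t\colon\mathcal{K}\to\K^b(\mathcal{A})$ is tensor by \cite{Sosnilo:WeightStructures}, \cite{Aoki:WeightFunctorIsTensor}, and $\K^b(\mathcal{A})$ admits no nonzero $\otimes$-nilpotent morphisms as $\mathcal{A}$ is semisimple with no $\otimes$-zero divisors), together with the converse, namely \Cref{theorem:weightcomplexfunctordetectsnilpotence}, which applies since $\mathcal{K}$ satisfies the hypotheses of \Cref{theorem:nilpotentspectrum} by \Cref{theorem:spectraofDAM/DATM}.

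The computation underpinning everything is this: for pure isotropic Artin motives $A,B\in\mathcal{A}$ one has $\Hom_\mathcal{K}(A,B[m])=0$ whenever $m\notin\{0,-1\}$. Indeed, using self-duality of motives of finite separable extensions and the decomposition $E\otimes_k F\cong\prod_i L_i$ into fields, this reduces to computing the groups $H^{m,0}_{iso}(\Spec L_i;\F_2)$, which by \cite{Tanania:CellularObjects}/\cite{Tanania:IsotropicFundamentalGroups} (exactly as in the proof of \Cref{theorem:spectraofDAM/DATM}) and \Cref{thmref:isotropicmotiviccohofpoint} are the Tate-twist-$0$ part of the exterior algebra $\Lambda_{\F_2}(r_0,r_1,\dots)$ with $r_j$ in bidegree $(1-2^j)[1-2^{j+1}]$. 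A monomial $r_{j_1}\cdots r_{j_s}$ has twist $0$ precisely when $s=\sum_\ell 2^{j_\ell}$, and $s$ distinct powers of $2$ sum to at least $2^s-1\ge s$, forcing $s\le 1$; so the only such monomials are $1$ (shift $0$) and $r_0$ (shift $-1$). In particular $\Hom_\mathcal{K}(X,\unit)=0$ for every $X\in\mathcal{K}^{\le -2}$, since a bounded weight filtration of $X$ has graded pieces $A[-j]$ with $j\le -2$ and $\Hom_\mathcal{K}(A[-j],\unit)=\Hom_\mathcal{K}(A,\unit[j])=0$.

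It then remains to prove the reduced claim. As there are no nonzero morphisms $\mathcal{K}^{\ge 1}\to\unit$ (weight axiom) and $t(\phi)=0$ kills the induced map on the weight-$0$ graded piece, $\phi$ factors through $q\to q_{\le -1}$; replacing $q$ by $q_{\le -1}$ and $\psi$ by the (still $\otimes$-nilpotent) composite $p\to q\to q_{\le -1}$, we may assume $q\in\mathcal{K}^{<0}$. Next I would show $\psi\colon p\to q$ factors through $p\to p_{\le -2}$: since $\Hom_\mathcal{K}(\mathcal{K}^{\ge 0},q)=0$, already the composite $p_{\ge 0}\to p\xrightarrow{\psi}q$ vanishes, so $\psi$ factors through $p_{\le -1}$; and the only remaining obstruction to factoring through $p_{\le -2}$ is a morphism from the pure weight-$(-1)$ piece of $p$ to $q$, which lies in $\Hom_\mathcal{A}(w_{-1}(p),w_{-1}(q))$ — the weight-$(-1)$ piece being the only graded piece of $q$ that a pure weight-$(-1)$ object can map to — and this is exactly the $(-1)$-st component of $t(\psi)$, hence zero. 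Thus $\phi\psi$ factors through $p_{\le -2}\to\unit$, which is zero by the preceding paragraph, and we are done.

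The main obstacle is this last factoring step: matching the ``exotic'' morphism $w_{-1}(p)[1]\to q$ with the $(-1)$-st component of the weight complex $t(\psi)$ is a weight-complex bookkeeping argument in the same spirit as the proof of \Cref{theorem:weightcomplexfunctordetectsnilpotence}, requiring one to chase compatible weight Postnikov towers for $p$ and $q$; once that is set up, all the vanishing statements are immediate from the exterior-algebra computation above, which is really just the identity $r_0^2=0$ dressed up to account for general (non-split) objects.
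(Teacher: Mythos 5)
Your proof is correct and is essentially the paper's argument in dual form: the paper reduces the \emph{source} to $\unit$ via rigidity and climbs up the weight filtration (factoring $f$ through $y_{\geq 1}$ and then $g_{\geq 1}$ through $z_{\geq 2}$ using $t(f)=0$, $t(g)=0$), whereas you reduce the \emph{target} to $\unit$ and descend (factoring $\phi$ through $q_{\leq -1}$ and $\psi$ through $p_{\leq -2}$), with the final vanishing in both cases coming from the twist-$0$ part of the isotropic cohomology exterior algebra being concentrated in degrees $0$ and $-1$. The only minor difference is that you make the computation $\Hom(A,B[m])=0$ for $m\notin\{0,-1\}$ fully explicit (and also use it in the intermediate factoring step, where the paper instead just propagates a morphism of weight Postnikov towers), while the paper invokes it only implicitly at the very end; both versions rest on the same weight-complex bookkeeping about lifts of a morphism to a morphism of towers.
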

\begin{proof}
    Let
    \[\begin{tikzcd}
        x \arrow[r, "f"] & y \arrow[r, "g"] & z
    \end{tikzcd}\]
    be morphisms between compact objects such that both are $\otimes$-nilpotent: note this is equivalent to vanishing under the weight complex functor $t$ by \Cref{prop:NoSiffnilpotency}. Then since the dual $h^\vee:\unit \rightarrow b \otimes a^\vee$ of any given map $h:a \rightarrow b$ between rigid-compacts $a,b$ is given by the composition
    \[\begin{tikzcd}
        \unit \arrow[r, "\eta"] & a \otimes a^\vee \arrow[r, "h \otimes \id_{a^\vee}"] & b \otimes a^\vee
    \end{tikzcd}\]
    we see that $(g \circ f)^\vee$ is equal to
    \[(g \circ f)^\vee = (g \otimes \id_{x^\vee}) \circ f^\vee.\]
    Thus, without loss of generality we may assume $x=\unit$ (as then we can apply the result to the composition
    \[\begin{tikzcd}
        \unit \arrow[r, "f^\vee"] & y \otimes x^\vee \arrow[r, "g \otimes \id_{x^\vee}"] & z \otimes x^\vee
    \end{tikzcd}\]
    to obtain the result).

    Fix weight decompositions of $y,z$ with zero differentials. Considering the diagram
    \[\begin{tikzcd}
        & \unit \arrow[dl, dashrightarrow] \arrow[d, "f"] \arrow[rd, "0"] &  & \\
        y_{\geq 0} \arrow[r] & y \arrow[r] & y_{\leq -1} \arrow[r] &  y_{\geq 0}[1]
    \end{tikzcd}\]
    (the right arrow is zero due to weight considerations) we see that $f$ factors through the truncation $y_{\geq 0} \rightarrow y$. Considering further
    \[\begin{tikzcd}
        & \unit \arrow[dl, dashrightarrow] \arrow[d, "f_{\geq 0}"] \arrow[rd, "0"] & \\
        y_{\geq 1} \arrow[r] & y_{\geq 0} \arrow[r] & y_0 \arrow[r] &  y_{\geq 1}[1]
    \end{tikzcd}\]
    (the right arrow is zero due to the assumption $t(f)=0$) we see that $f$ factors once more through $y_{\geq 1} \rightarrow y_{\geq 0} \rightarrow y$.

    Now looking at $g$, we obtain a map $y_{\geq 1} \rightarrow z_{\geq 2}$ factoring $g_{\geq 1}$:
    \[\begin{tikzcd}
        y_{\geq 2} \arrow[r] \arrow[d] & y_{\geq 1} \arrow[dl, dashrightarrow] \arrow[d, "g_{\geq 1}"] \arrow[r] & y_1 \arrow[r] \arrow[d, "0"] & y_{\geq 2}[1] \arrow[d] \\
        z_{\geq 2} \arrow[r] & z_{\geq 1} \arrow[r] & z_1 \arrow[r] &  z_{\geq 2}[1]
    \end{tikzcd}\]
    (that is, the diagram
    \[\begin{tikzcd}
        & y_{\geq 1} \arrow[dl, dashrightarrow] \arrow[d, "g_{\geq 1}"] \\
        z_{\geq 2} \arrow[r] & z_{\geq 1}
    \end{tikzcd}\]
    commutes).

    In all, we get a diagram as follows:
    \[\begin{tikzcd}
        \unit \arrow[r] \arrow[rd, swap, "f"] & y_{\geq 1} \arrow[r] \arrow[d] \arrow[rd, "g_{\geq 1}"] & z_{\geq 2} \arrow[d] \\
        & y \arrow[rd, swap, "g"] & z_{\geq 1} \arrow[d] \\
        & & z
    \end{tikzcd}\]
    which moreover commutes. In particular, we get that the composition $g \circ f$ factors through $\unit \rightarrow z_{\geq 2}$. But the group $\Hom(\unit,z_{\geq 2})$ is an iterated extension of the groups $\Hom(\unit,z_n)$ for the graded pieces $z_n$ of $z$ for $n \geq 2$. Since these groups are zero in $\DAM_{gm}(k/k;\F_2)$, we conclude.
\end{proof}
As a consequence, we obtain the following.
\begin{prop}\label{prop:tisconservativeonArtins}
    The kernel of the weight complex functor $t:\DAM(k/k;\F_2) \rightarrow \K(\mathcal{A}_\oplus)$ is trivial. In particular, the zero ideal is prime in $\DAM(k/k;\F_2)$.
\end{prop}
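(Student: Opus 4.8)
The plan is to prove $\ker(t)=0$ first, and then deduce primality of the zero ideal formally. For the conservativity statement I would argue by contradiction. Suppose $X \in \ker(t)$ is nonzero. Since $\DAM(k/k;\F_2)$ is compactly generated by $\mathcal{K} \coloneqq \DAM_{gm}(k/k;\F_2)$ and $\mathcal{K}$ is closed under suspension, there is a compact object $x \in \mathcal{K}$ with $\Hom_\mathcal{T}(x,X) \neq 0$; pick a nonzero $g \colon x \to X$. By \Cref{lemma:expressionofkernel} we may write $X \simeq \colim_{i \in I}(x_i)$ as a filtered homotopy colimit of objects of $\mathcal{K}$ along a (1-categorical) filtered diagram $I$ in which, for every object $i$, there is a morphism $\alpha_i \colon i \to j(i)$ of $I$ whose image $f_i \colon x_i \to x_{j(i)}$ in $\mathcal{T}$ is $\otimes$-nilpotent.

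The heart of the argument is then to show that $g$ must in fact vanish. By \Cref{lemma:homotopycolimits}, $g$ is represented by a morphism $g_0 \colon x \to x_i$ for some $i \in I$, and $g$ is the zero element of $\Hom_\mathcal{T}(x,X) \simeq \colim_{i}\Hom_\mathcal{T}(x,x_i)$ precisely when there is a transition morphism $\beta$ of $I$ out of $i$ with $F(\beta) \circ g_0 = 0$ on the nose. A single $\otimes$-nilpotent transition $f_i$ does not suffice for this (it need not itself be zero), so I would compose two of them: take $\beta \coloneqq \alpha_{j(i)} \circ \alpha_i \colon i \to j(j(i))$, so that $F(\beta) = f_{j(i)} \circ f_i$ is a composite of two $\otimes$-nilpotent morphisms in $\mathcal{K}$ and hence is identically zero by \Cref{lemma:DAMcomposition}. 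Then $F(\beta) \circ g_0 = 0$, so $g$ represents $0$ in the colimit, contradicting $g \neq 0$. Therefore $\Hom_\mathcal{T}(x,X) = 0$ for every compact $x$, and so $X \simeq 0$; this shows $\ker(t) = 0$.

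For the final assertion, recall that $t$ is symmetric monoidal (\Cref{prop:existenceoft}) and that $\K(\mathcal{A}_\oplus)$ has no nonzero $\otimes$-zero divisors (the corollary following \Cref{prop:semisimpleabeliancompletion}). Thus if $x \otimes y \simeq 0$ in $\DAM(k/k;\F_2)$, then $t(x) \otimes t(y) \simeq t(x \otimes y) \simeq 0$, so $t(x) \simeq 0$ or $t(y) \simeq 0$, and conservativity gives $x \simeq 0$ or $y \simeq 0$. Since the category is nonzero, $(0)$ is a prime tt-ideal.

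I do not anticipate a substantial obstacle at this stage: the genuinely delicate input---that a composite of \emph{two} $\otimes$-nilpotent morphisms between isotropic Artin motives is literally zero rather than merely $\otimes$-nilpotent---is precisely \Cref{lemma:DAMcomposition}, whose proof already exploits the vanishing of $\Hom(\unit, z_n)$ in weights $n \geq 2$. The only point requiring care is the bookkeeping with the filtered index category: one must check that the two ``jump'' morphisms $\alpha_i$ and $\alpha_{j(i)}$ compose inside $I$ and that the image of their composite kills the chosen representative $g_0$, which is routine once one tracks representatives of elements of a filtered colimit of abelian groups.
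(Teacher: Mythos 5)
Your proof is correct and follows essentially the same path as the paper: express $X \in \ker(t)$ via \Cref{lemma:expressionofkernel}, note that two consecutive $\otimes$-nilpotent transition maps compose to \emph{zero} by \Cref{lemma:DAMcomposition}, and conclude $\Hom(x,X)=\colim_i\Hom(x,x_i)=0$ for all compact $x$ via \Cref{lemma:homotopycolimits} and compact generation. The only cosmetic difference is that you argue by contradiction with a chosen nonzero representative while the paper shows all $\Hom$-groups vanish directly; your spelling out of the ``in particular'' via monoidality of $t$ and the absence of $\otimes$-zero divisors in $\K(\mathcal{A}_\oplus)$ is also correct.
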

\begin{remark}
    We remind the reader that one needs to take caution when discussing primes in `big' tt-categories; when we say $(0)$ is prime in such a category, we simply mean that a tensor-product of nonzero objects remains nonzero.
\end{remark}
\begin{proof}
    Take $X \in \ker(t)$. By \Cref{lemma:expressionofkernel}, $X$ can be expressed as the filtered (homotopy) colimit (say by a diagram of shape $I$) of compact objects $x_i$ for which there is some morphism of the diagram out of $x_i$ which is $\otimes$-nilpotent. For each $i \in I$, choose such a morphism $f_i:x_i \rightarrow x_{i'}$. Take some compact object $x \in \DAM_{gm}(k/k;\F_2)$. Then for every morphism $f:x \rightarrow x_i$, the composite
    \[\begin{tikzcd}
        x \arrow[r, "f"] & x_i \arrow[r, "f_i"] & x_{i'} \arrow[r, "f_{i'}"] & x_{i''}
    \end{tikzcd}\]
    vanishes by \Cref{lemma:DAMcomposition} (the composition $f_{i'} \circ f_i$ is already zero). With the help of \Cref{lemma:homotopycolimits}, this proves the morphism group
    \[\Hom(x,X) \simeq \colim_{i \in I}\Hom(x,x_i)\]
    is zero. As $x \in \DAM_{gm}(k/k;\F_2)$ was arbitrary, we deduce $X$ is zero via compact generation.
\end{proof}
\begin{remark}
    This provides another way of proving the Balmer spectrum of $\DAM_{gm}(k/k;\F_2)$ is a singleton. Indeed, for $\mathcal{K},\mathcal{T}$ as above, the homological primes $\mathcal{B} \in \Spc^h(\mathcal{K})$ can be characterised as kernels $\ker(\widehat{E_\mathcal{B}} \otimes -)$ for objects $E_\mathcal{B} \in \mathcal{T}$, which satisfy $E_\mathcal{B} \otimes E_{\mathcal{B}'}=0$ for $\mathcal{B} \neq \mathcal{B}'$ (\cite[§5]{Balmer:ResidueFields}). It follows that the zero ideal being prime in the big category $\mathcal{T}$ forces the homological spectrum $\Spc^h(\mathcal{K})$ to be singleton (because $E_\mathcal{B}=0$ is forbidden). Then, the continuous surjection $\phi:\Spc^h(\mathcal{K}) \rightarrow \Spc(\mathcal{K})$ forces the Balmer spectrum of $\mathcal{K}$ to be a singleton \textit{a fortiori}.
\end{remark}
\begin{remark}\label{remark:squarestozero}
    We can show the zero ideal is \emph{not} prime in neither $\DTM(k/k;\F_2)$ nor $\DATM(k/k;\F_2)$. Indeed, the nonzero object $\Omega$ of \Cref{prop:tisnotconservative} satisfies $\Omega \otimes \Omega \simeq 0$. We can see this by considering
    \[\Omega \otimes \Omega \simeq \hocolim(\Omega \overset{r_0}{\longrightarrow} \Omega[-1] \overset{r_1[-1]}{\longrightarrow} \cdots)\]
    from which we find, for any compact $x$, the morphism group $\Hom(x,\Omega \otimes \Omega)$ is the colimit of the $(\N \times \N)$-indexed diagram
    \[\begin{tikzcd}
        \vdots & \vdots & \vdots \\
        \Hom(x,T(-1)[-4]) \arrow[r] \arrow[u] & \Hom(x,T(-1)[-5]) \arrow[r] \arrow[u] & \Hom(x,T(-2)[-8]) \arrow[r] \arrow[u] & \cdots \\
        \Hom(x,T[-1]) \arrow[r] \arrow[u] & \Hom(x,T[-2]) \arrow[r] \arrow[u] & \Hom(x,T(-1)[-5]) \arrow[r] \arrow[u] & \cdots \\
        \Hom(x,T) \arrow[r] \arrow[u] & \Hom(x,T[-1]) \arrow[r] \arrow[u] & \Hom(x,T(-1)[-4]) \arrow[r] \arrow[u] & \cdots \\
    \end{tikzcd}\]
    which clearly vanishes; compact generation tells us $\Omega \otimes \Omega \simeq 0$.
\end{remark}

\section{Generation and the Rouquier dimension}
By \Cref{cor:spectrumofDTM}, we have that the Krull dimension of $\DTM_{gm}(k/k;\F_2)$ is $0$ (the Krull dimension of a tt-category being defined as the Krull dimension of its Balmer spectrum). There is another definition of dimension of a triangulated category---the \emph{Rouquier dimension}: the Rouquier dimension of a (not necessarily tensor) triangulated category $\mathcal{K}$ is the minimum $n$ for which there exists an object $a \in \mathcal{K}$ such that $\ideal{a}^\triangle_{n+1}=\mathcal{K}$, where $\ideal{a}^\triangle_m$ for a positive integer $m$ is defined (recursively) to be the smallest full subcategory of $\mathcal{K}$ generated by $a$ closed under finite sums, summands and $m-1$ many extensions, and is defined to be $\infty$ if no such $a,n$ exist. If such an $a$ exists, it is called a \emph{strong generator} for the category. A category $\mathcal{K}$ is called \emph{classically finitely generated} if there is a finite set $\mathcal{S}$ for which $\mathcal{K}=\ideal{\mathcal{S}}^\triangle$ where $\ideal{\mathcal{S}}^\triangle = \cup_{m \geq 1}\ideal{\mathcal{S}}^\triangle_m$ is the thick triangulated subcategory generated by $\mathcal{S}$. Note that this is equivalent to a single object classically generating $\mathcal{K}$ (the object $a \coloneqq \oplus_{b \in \mathcal{S}} b$ does the job). See, for instance, \cite{Orlov:RegularCategories} for precise definitions of these subcategories.

We will see that the categories $\DTM_{gm}(k/k;\F_2)$, $\DATM(k/k;\F_2)$ over a flexible field $k$ are not classically finitely generated. We also note that the methods apply to the global category of motives as well.

\begin{lemma}\label{lemma:generalgeneration}
    Suppose $\mathcal{K}$ is a triangulated category equipped with a bounded weight structure, and suppose there are infinitely many $x_0,x_1,x_2,\ldots \in \mathcal{K}^\heartsuit$ such that for each $a \in \mathcal{K}^\heartsuit$, we have $\Hom(a[l],x_n)=0$ for all $l \in \Z$, for all but finitely many $x_n$. Then $\mathcal{K}$ is not classically finitely generated. That is, there is no finite set of objects $\mathcal{S} \subseteq \mathcal{K}$ that generate $\mathcal{K}$ under shifts, cones, coproducts and summands.
\end{lemma}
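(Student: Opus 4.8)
The plan is to run an ``escaping object'' argument. Suppose, for contradiction, that $\mathcal{K}$ is classically finitely generated, say $\mathcal{K}=\ideal{\mathcal{S}}^\triangle$ for a finite set $\mathcal{S}$; replacing $\mathcal{S}$ by the single object $G\coloneqq\bigoplus_{s\in\mathcal{S}}s$ we may assume $\mathcal{K}=\ideal{G}^\triangle$. I will then exhibit one of the $x_n$ that cannot lie in $\ideal{G}^\triangle$, which contradicts $x_n\in\mathcal{K}^\heartsuit\subseteq\mathcal{K}$.

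First I would use the bounded weight structure to pin down $G$: it lies in $\mathcal{K}^{a\leq w\leq b}$ for some integers $a\leq b$, hence is a successive extension of finitely many objects $G_i\in\mathcal{K}^\heartsuit[-i]$ with $a\leq i\leq b$; write $G_i=g_i[-i]$ with $g_i\in\mathcal{K}^\heartsuit$. Each of the finitely many objects $g_a,\dots,g_b$ has $\Hom(g_i[l],x_n)=0$ for all $l\in\Z$ and all but finitely many $n$, by hypothesis; taking the (finite) union of these exceptional indices, I may choose $N$ with $x_N\neq 0$ and $\Hom(g_i[l],x_N)=0$ for all $a\leq i\leq b$ and all $l\in\Z$, equivalently $\Hom(G_i[l],x_N[m])=0$ for all $l,m\in\Z$.

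The key object is the full subcategory
\[\mathcal{D}\coloneqq\{Y\in\mathcal{K}\mid\Hom(Y[l],x_N)=0\text{ for all }l\in\Z\}.\]
I would check that $\mathcal{D}$ is a thick subcategory: it is closed under shifts and direct summands by inspection, and applying $\Hom(-,x_N[l])$ to a distinguished triangle and running the resulting long exact sequence (over all $l$) gives the two-out-of-three property, so $\mathcal{D}$ is triangulated and hence thick. By the previous paragraph each $G_i$, and therefore each of its shifts, lies in $\mathcal{D}$; since $G$ is a successive extension of the $G_i$ and $\mathcal{D}$ is closed under extensions, $G\in\mathcal{D}$, whence $\ideal{G}^\triangle\subseteq\mathcal{D}$. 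On the other hand $x_N\notin\mathcal{D}$, since $\id_{x_N}\in\Hom(x_N,x_N)$ is nonzero. Thus $x_N\notin\ideal{G}^\triangle=\mathcal{K}$, the desired contradiction, and no finite generating set exists.

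I expect no step to be a genuine obstacle; the content is entirely bookkeeping. The one point requiring care is the order of quantifiers: the index $N$ must be selected \emph{after} the generator $G$, and hence its finitely many weight-graded pieces $g_i$, have been fixed, since which $x_n$ are ``usable'' depends on the $g_i$; and one needs $x_N\neq 0$, so the hypothesis must be read as supplying infinitely many nonzero objects of $\mathcal{K}^\heartsuit$. The role of boundedness of the weight structure is precisely to guarantee that an arbitrary generator involves only finitely many objects of the heart, so that discarding finitely many terms of the sequence $(x_n)$ still leaves one available.
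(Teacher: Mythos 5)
Your proposal is correct and runs along essentially the same lines as the paper's proof: decompose the putative generator into its finitely many weight-graded pieces via boundedness, choose an $x_N$ orthogonal (in all shifts) to all of them, and conclude that $x_N$ cannot lie in the thick subcategory generated. The only difference is presentational: you spell out explicitly that $\mathcal{D}=\{Y\mid\Hom(Y[l],x_N)=0\ \forall l\}$ is thick and contains the generator, whereas the paper cites \cite{Bondal:Generators} for this step; and you handle the ``all but finitely many'' quantifier a touch more carefully (taking the union of exceptional index sets rather than a maximum), and correctly flag that the hypothesis must furnish nonzero $x_n$.
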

\begin{proof}
    Let $a \in \mathcal{K}$. Then the bounded weight structure permits us to filter $a$ by objects $a_i[-i] \in \mathcal{K}^\heartsuit[-i]$ of weight $i$. Then for each $a_i$, there is some minimum $n_i$ for which $\Hom(a_i[l],x_{n_i})=0$ for all $l$, by assumption. By choosing $N>\operatorname{max}_i\{n_i\}$ we have for each $i$, $\Hom(a_i[l-i],x_N)=0$. This implies $\Hom(a[l],x_N)=0$, as this group is an iterated extension of those we have just calculated. It follows $a$ does not classically generate $\mathcal{K}$, as $\ideal{a}^\triangle=\mathcal{K}$ implies, for any given $x \in \mathcal{K}$, $\Hom(a[l],x)=0$ for all $l \in \Z$ only if $x=0$ (see the introduction of \cite[§2]{Bondal:Generators}).
\end{proof}

\begin{prop}\label{prop:DTMnotfinitelygenerated}
    The category $\DTM_{gm}(k/k;\F_2)$ is not classically finitely generated.
\end{prop}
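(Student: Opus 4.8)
The plan is to deduce the result directly from \Cref{lemma:generalgeneration}, so the whole task reduces to exhibiting a suitable infinite family $x_0, x_1, \dots$ in the heart. By \Cref{theorem:DTMsatisfiesthecondition} (and the underlying \Cref{thmref:heartofgeometricisotropicmotives}), the Chow weight structure on $\mathcal{K} \coloneqq \DTM_{gm}(k/k;\F_2)$ is bounded and its heart is the semisimple abelian category of finite-dimensional graded $\F_2$-vector spaces; concretely, every object of $\mathcal{K}^\heartsuit$ is a finite direct sum of the pure Tate motives $T(i)[2i]$ for $i \in \Z$, these being the simple objects. So \Cref{lemma:generalgeneration} applies as soon as we find infinitely many $x_n \in \mathcal{K}^\heartsuit$ with the stated $\Hom$-vanishing.

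First I would set $x_n \coloneqq T(n)[2n]$ for $n \in \N$. Since any $a \in \mathcal{K}^\heartsuit$ is a finite direct sum of objects $T(i)[2i]$, it suffices to check the hypothesis for $a = T(i)[2i]$ with $i$ fixed. Here, for every $l \in \Z$,
\[
\Hom_{\mathcal{K}}\bigl(T(i)[2i+l],\, T(n)[2n]\bigr) \;\simeq\; H^{\,2n-2i-l,\; n-i}_{iso}(\Spec k; \F_2),
\]
the isotropic motivic cohomology of the point in cohomological degree $2n-2i-l$ and weight $n-i$.

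Then I would invoke \Cref{thmref:isotropicmotiviccohofpoint}: the isotropic motivic cohomology $H^{*,*'}_{iso}(\Spec k;\F_2)$ is the exterior algebra $\Lambda_{\F_2}(r_0, r_1, \dots)$ with $r_j$ of weight $1-2^j$, so every nonzero homogeneous element has weight $\sum_{j \in S}(1-2^j) \le 0$ for some finite $S \subseteq \N$; in particular $H^{*,q}_{iso}(\Spec k; \F_2) = 0$ whenever $q > 0$. Applying this with $q = n-i$, the displayed group vanishes for all $l$ as soon as $n > i$. Hence for any fixed $a \in \mathcal{K}^\heartsuit$, say involving only twists lying in a finite set $F \subseteq \Z$, we get $\Hom(a[l], x_n) = 0$ for all $l$ once $n > \max F$, i.e. for all but finitely many $n$. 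By \Cref{lemma:generalgeneration}, $\mathcal{K}$ is not classically finitely generated.

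The only genuine input is \Cref{thmref:isotropicmotiviccohofpoint}, namely that $H^{*,*'}_{iso}(\Spec k;\F_2)$ is concentrated in non-positive weights; beyond that the argument is formal. The one place I would take care is the identification of the $\Hom$-group in $\DTM_{gm}(k/k;\F_2)$ with the isotropic motivic cohomology of $\Spec k$, together with the reduction to the simple objects of the heart. I would also remark that either of the families $T(\pm n)[\pm 2n]$ works, precisely because this cohomology is bounded on one side of the weight grading --- exactly the feature the full category $\DM_{gm}(k/k;\F_2)$ is not known to have.
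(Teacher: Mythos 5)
Your proof is correct and takes essentially the same approach as the paper: you pick the family $x_n = T(n)[2n]$, reduce to simple objects of the heart, and use that $H^{*,*'}_{iso}(\Spec k;\F_2)$ is concentrated in non-positive weights (a consequence of \Cref{thmref:isotropicmotiviccohofpoint}) to apply \Cref{lemma:generalgeneration}. The paper's proof is just a terser version of the same argument.
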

\begin{proof}
    We take as $x_n$ the isotropic Tate motive $T(n)[2n]$. Then as the isotropic motivic cohomology of the point $H_{iso}^{p,q}(\Spec(k),\F_2)$ is concentrated in the left half of the plane, we have $\Hom_{\DTM_{gm}(k/k;\F_2)}(T(i)[l],T(j))=0$ for each $i<j$ and all $l \in \Z$, and so we conclude by \Cref{lemma:generalgeneration}.
\end{proof}
The same proof applies to give
\begin{prop}
    The category $\DATM_{gm}(k/k;\F_2)$ is not classically finitely generated.
\end{prop}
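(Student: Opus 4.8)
The plan is to apply \Cref{lemma:generalgeneration} verbatim as in the proof of \Cref{prop:DTMnotfinitelygenerated}, only with $\mathcal{K} = \DATM_{gm}(k/k;\F_2)$ in place of $\DTM_{gm}(k/k;\F_2)$. The relevant bounded weight structure is the isotropic Chow weight structure, which restricts from $\DM_{gm}(k/k;\F_2)$ to $\DATM_{gm}(k/k;\F_2)$ as recorded in the proof of \Cref{theorem:spectraofDAM/DATM}; its heart is generated, up to finite direct sums and summands, by the pure objects $M(E)(i)[2i]$ for $E/k$ a finite field extension and $i \in \Z$. As the family $\{x_n\}$ demanded by the lemma I would take the pure isotropic Tate motives $x_n \coloneqq T(n)[2n] \in \DATM_{gm}(k/k;\F_2)^\heartsuit$, $n \geq 0$.

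The single thing to verify is the vanishing hypothesis of \Cref{lemma:generalgeneration}: for every $a \in \DATM_{gm}(k/k;\F_2)^\heartsuit$ and every $l \in \Z$ one has $\Hom(a[l], x_n) = 0$ for all but finitely many $n$. Since $\Hom(-, x_n)$ turns finite direct sums into finite products and sends summands to summands, it suffices to treat $a = M(E)(i)[2i]$. Using invertibility of the Tate twist one rewrites
\[
\Hom(M(E)(i)[2i+l],\, T(n)[2n]) \cong \Hom(M(E),\, T(n-i)[2n-2i-l]) = H^{2n-2i-l,\, n-i}_{iso}(\Spec E;\F_2).
\]
Now I invoke the computation recalled in the proof of \Cref{theorem:spectraofDAM/DATM}: if $[E:k]$ is even then $M(\Spec E) = 0$ in the isotropic category, while if $[E:k]$ is odd then $H^{**}_{iso}(\Spec E;\F_2) \cong H^{**}_{iso}(\Spec k;\F_2) = \Lambda_{\F_2}(r_0,r_1,\ldots)$ by \Cref{thmref:isotropicmotiviccohofpoint}, with each $r_j$ sitting in weight $1-2^j \leq 0$. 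Hence this cohomology is concentrated in non-positive weights, so the displayed group vanishes as soon as the weight $n-i$ is strictly positive, i.e. for every $n > i$. For a general $a$, choosing $N$ larger than all the twists $i$ occurring in a presentation of $a$ as a summand of a finite direct sum of $M(E)(i)[2i]$'s gives $\Hom(a[l],x_n) = 0$ for all $n \geq N$ and all $l$, which is precisely the hypothesis of \Cref{lemma:generalgeneration}.

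There is essentially no obstacle here beyond bookkeeping: the only points needing a word of justification are that $T(n)[2n]$ genuinely lies in the heart of the restricted weight structure (it is Tate, hence Artin--Tate, and pure of weight $n$) and that the heart is generated up to finite sums and summands by the $M(E)(i)[2i]$ --- both immediate from the results already cited in the proof of \Cref{theorem:spectraofDAM/DATM}. With the vanishing in hand, \Cref{lemma:generalgeneration} yields at once that $\DATM_{gm}(k/k;\F_2)$ is not classically finitely generated.
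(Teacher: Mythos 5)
Your proposal is correct and takes essentially the same approach as the paper: the paper disposes of this proposition with the one-line observation that the argument of \Cref{prop:DTMnotfinitelygenerated} carries over unchanged, and your write-up simply makes explicit what that entails for the Artin factor (namely, that the vanishing $[E:k]$ even case and the odd case identification $H^{**}_{iso}(\Spec E) \cong H^{**}_{iso}(\Spec k)$ together ensure the same non-positive-weight concentration). The small points you flagged as needing justification are indeed covered by what is recorded in the proof of \Cref{theorem:spectraofDAM/DATM}, so there is no gap.
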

We see that, although the result may be well-known, the methods above apply to give a proof of the following
\begin{prop}\label{prop:DMnotfinitelygenerated}
    The category $\DM_{gm}(k)$ is not classically finitely generated.
\end{prop}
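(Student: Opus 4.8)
The plan is to deduce this from \Cref{lemma:generalgeneration} applied to $\mathcal{K} = \DM_{gm}(k)$ equipped with the bounded Chow weight structure $w_{Chow}$, whose heart is the category $\Chow(k)$ of Chow motives. For the required infinite family of objects in $\mathcal{K}^\heartsuit = \Chow(k)$ I would take the \emph{negative} Tate twists
\[
x_n \coloneqq T(-n)[-2n] = \unit(-n)[-2n], \qquad n = 1, 2, 3, \ldots,
\]
which lie in $\Chow(k)$ (they are the $\otimes$-powers of the inverse Lefschetz motive, which is pure of weight zero) and which are pairwise non-isomorphic (since twisting by $T(1)[2]$ is an autoequivalence, $T(-n)[-2n] \cong T(-n')[-2n']$ would force $T \cong T(n'-n)[2(n'-n)]$).

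The single input needed is the standard vanishing of motivic cohomology in negative weights: for any smooth variety $X$ over $k$ and any $p$ one has $\Hom_{\DM_{gm}(k)}(M(X), \unit(q)[p]) = H^{p,q}_{\mathcal{M}}(X) = 0$ whenever $q < 0$. Granting this, the hypothesis of \Cref{lemma:generalgeneration} is immediate: an arbitrary $a \in \Chow(k)$ is a direct summand of some $M(X)(m)[2m]$ with $X$ smooth projective and $m \in \Z$, so for every $l \in \Z$ and every $n$ the group $\Hom_{\mathcal{K}}(a[l], x_n)$ is a direct summand of
\[
\Hom_{\DM_{gm}(k)}\bigl( M(X)(m)[2m+l],\ \unit(-n)[-2n] \bigr) \;\cong\; H^{\,-2n-2m-l,\ -n-m}_{\mathcal{M}}(X).
\]
As soon as $n > -m$ the weight $-n-m$ is strictly negative, so this group vanishes for \emph{every} $l$; hence $\Hom_{\mathcal{K}}(a[l], x_n) = 0$ for all $l \in \Z$ and all but finitely many $n$. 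This is exactly what \Cref{lemma:generalgeneration} requires, and it then yields that $\DM_{gm}(k)$ is not classically finitely generated.

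I do not expect a genuine obstacle here: this is simply the global counterpart of the proof of \Cref{prop:DTMnotfinitelygenerated}, with the statement that the isotropic motivic cohomology of the point is concentrated in a half-plane replaced by the (unconditional) vanishing of motivic cohomology in negative weights, and with positive Tate twists replaced by negative ones. The only points deserving a word of justification are that $T(-1)[-2]$ genuinely lies in the heart of the Chow weight structure on the \emph{non-effective} category $\DM_{gm}(k)$ (standard: the Lefschetz motive and its inverse are pure of weight zero, so $\Chow(k)$ as the weight-heart contains all integer Tate twists) and the existence of the bounded Chow weight structure itself, for which I would cite Bondarko's construction (valid for $k$ of characteristic zero, and in general after inverting the exponential characteristic).
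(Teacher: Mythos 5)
Your proposal is correct and is essentially the paper's proof: both take $x_n = T(-n)[-2n]$, both cite the vanishing of motivic cohomology in negative weights, and both feed this into \Cref{lemma:generalgeneration}. The minor extra remarks you make (that $T(-1)[-2]$ lies in the heart, and that the Chow weight structure is bounded) are correct and unproblematic, but they are not new ingredients.
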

\begin{proof}
    We take as $x_n$ the Tate motive $T(-n)[-2n]$. Then for an arbitrary pure motive $X$ witnessed as a direct summand of $M(V)(m)[2m]$ where $V$ is a smooth projective variety over $k$ and $m \in \Z$, we have the motivic cohomology $H_\mathcal{M}^{p,q}(V;\Z)=0$ for $q < 0$ (\cite[Cor.~4.2]{Mazza:MotivicCohomology}). This implies for every $j>-m$ we have $\Hom(X[i],T(-j))=0$ for all $i$. \Cref{lemma:generalgeneration} allows us to finish.
\end{proof}
\begin{cor}
    The Rouquier dimensions of the categories $\DTM_{gm}(k/k;\F_2)$, $\DATM_{gm}(k/k;\F_2)$ and $\DM_{gm}(k)$ are infinite. In the language of Orlov \cite{Orlov:RegularCategories}, these categories are not regular.
\end{cor}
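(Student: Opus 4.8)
The plan is to read this off directly from the non-finite-generation results just established---namely \Cref{prop:DTMnotfinitelygenerated}, the unlabelled proposition immediately following it treating $\DATM_{gm}(k/k;\F_2)$, and \Cref{prop:DMnotfinitelygenerated}. Suppose, for contradiction, that one of the three categories $\mathcal{K}$ had finite Rouquier dimension, say equal to $n<\infty$. By definition this means there is a single object $a \in \mathcal{K}$ with $\ideal{a}^\triangle_{n+1}=\mathcal{K}$. Since $\ideal{a}^\triangle_{n+1} \subseteq \bigcup_{m \geq 1}\ideal{a}^\triangle_m = \ideal{a}^\triangle$, the thick triangulated subcategory generated by $a$, this forces $\ideal{a}^\triangle=\mathcal{K}$; that is, the single object $a$ classically generates $\mathcal{K}$, and so $\mathcal{K}$ is classically finitely generated.

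First I would invoke the three propositions above to rule this out: none of $\DTM_{gm}(k/k;\F_2)$, $\DATM_{gm}(k/k;\F_2)$, $\DM_{gm}(k)$ is classically finitely generated, so none of them can admit a strong generator, whence each has Rouquier dimension $\infty$.

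For the final clause I would simply recall that, in Orlov's terminology \cite{Orlov:RegularCategories}, a triangulated category is \emph{regular} precisely when it possesses a strong generator (equivalently, has finite Rouquier dimension). Since we have just seen that these three categories have no strong generator, they fail to be regular in this sense.

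There is no genuine obstacle here; the only point requiring (minimal) care is the bookkeeping of the definitions---namely that the ascending chain $\ideal{a}^\triangle_1 \subseteq \ideal{a}^\triangle_2 \subseteq \cdots$ exhausts the thick triangulated subcategory generated by $a$, so that ``finite Rouquier dimension'' is genuinely a special case of ``classically finitely generated by a single object.'' Everything else is a direct citation of the preceding results.
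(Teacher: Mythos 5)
Your proposal is correct and follows the same route the paper does: deduce the result from the preceding non-finite-generation propositions, using the observation that a strong generator is in particular a classical generator (since $\ideal{a}^\triangle_{n+1} \subseteq \ideal{a}^\triangle$), and then recall Orlov's definition of regularity. The paper's own proof is terser---it merely notes that regular means having a strong generator, equivalently finite Rouquier dimension---and leaves the contradiction with the preceding propositions implicit, whereas you spell it out; this is a helpful and accurate expansion, not a different argument.
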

\begin{proof}
    By definition, a triangulated category (which is linear over a field) is regular if it has a strong generator, which is equivalent to having finite Rouquier dimension.
\end{proof}

\printbibliography[title=References]
\end{document}